\numberwithin{equation}{section}
\newcommand{\length}{\mathrm{length}}
\newcommand{\supp}{\mathrm{supp}}
\newcommand\xappa\kappa
\newcommand\yota\iota
\newcounter{consta}
\newcounter{constb}
\newcounter{constc}[section]
\newcommand{\proj}{\mathrm{Pr}}
\newcommand{\R}{\mathbb{R}}
\newcommand{\Rc}{\mathcal{R}}
\newcommand{\Qc}{\mathcal{Q}}
\newcommand{\dist}{\mathrm{dist}}
\newcommand{\norm}[1]{\left\lVert#1\right\rVert}
\DeclareFontFamily{OT1}{rsfs}{}
\DeclareFontShape{OT1}{rsfs}{n}{it}{<-> rsfs10}{}
\DeclareMathAlphabet{\mathscr}{OT1}{rsfs}{n}{it}
\newtheorem{thm}{Theorem}[section]
\newtheorem{lem}[thm]{Lemma}
\newtheorem{prop}[thm]{Proposition}
\newtheorem*{lem*}{Lemma}
\newtheorem*{thm*}{Theorem}
\newtheorem*{conj*}{Conjecture}
\newtheorem*{prop*}{Proposition}
\newtheorem{defn*}{Definition}
\newtheorem{cor}[thm]{Corollary}
\newtheorem{standassm}[thm]{Standing assumptions}
\theoremstyle{definition}
\newtheorem{defn}[thm]{Definition}
\theoremstyle{remark}
\newtheorem{ex}[thm]{Example}
\newtheorem{rem}[thm]{Remark}
\newtheorem{obs}[thm]{Observation}
\newtheorem*{obs*}{Observation}
\newtheorem*{rem*}{Remark}
\theoremstyle{definition}
\begin{document}
\title[Measure rigidity of Anosov flows]{Measure rigidity of Anosov flows via the factorization method}
\author{Asaf Katz}
\address{Department of Mathematics, University of Michigan, Ann Arbor, Michigan 48109, USA}
\email{asaf.katz@gmail.com}

\date{}

\begin{abstract} 
Using the factorization method of Eskin and Mirzakhani, we show that generalized $u$-Gibbs states over quantitatively non-integrable partially hyperbolic systems have absolutely continuous disintegrations on unstable manifolds.

As an application, we show a pointwise equidistribution theorem analogous to the equidistribution results of Kleinbock-Shi-Weiss and Chaika-Eskin. 
\end{abstract}
\maketitle
\section{Introduction}

In this paper we apply the factorization method originating in the work of A. Eskin and M. Mirzakhani~\cite{eskin-mirzakhani} to the study of measure rigidity for Anosov flows and diffeomorphisms.
The main result of~\cite{eskin-mirzakhani}, a measure classification theorem for $P$-invariant probability measures over the moduli space of translation surface, is too technical to be presented here; the interested reader may consult~\cite{wright} for more details. 
The main ingredient in this measure classification theorem  is~\cite[Theorem~$2.1$]{eskin-mirzakhani}. This theorem, which can be applied in more abstract settings than the geometrical settings of~\cite{eskin-mirzakhani}, actually provides a general technique for proving measure rigidity results in the presence of a hyperbolic flow, coupled with parabolic behavior induced from some of its unstable distributions.
This strategy was carried out by Eskin-Lindenstrauss~\cite{eskin_lindenstrauss,eskin_lindenstrauss-long} where they gave a different (and more general) proof to the theorem of Benoist-Quint~\cite{B-Q} on measure rigidity of random walks on homogeneous spaces.

In this paper we carry the technique further, showing a measure rigidity result in the general setting of non-uniformly hyperbolic systems, which include in particular certain  Anosov flows and diffeomorphisms.

Let $(M,g_{t},\mu)$ be a measure preserving system, where $M$ is a Riemannian manifold and $\mu$ is $g_{t}$-invariant and ergodic measure.

Oseledets theorem, as applied to the tangent bundle $TM$ states
\begin{thm}
There exists a set of numbers $\left\{\lambda_{i}\right\}$ and a \emph{measurable splitting} of $TM$, 
\begin{equation*}
    TM = \oplus_{i} E^{\lambda_{i}},
\end{equation*}
such that for every $0\neq v\in E^{\lambda{i}}$,
\begin{equation*}
    \lambda_{i} = \lim_{t\to\infty} \frac{\ln \left\lVert g_{t}.v \right\rVert}{\left\lVert v\right\rVert}.
\end{equation*}
\end{thm}
The numbers $\left\{ \lambda_{i}\right\}$ are called the \emph{Lyapunov exponents associated to $(M,g_{t},\mu)$} and the respective subspaces $\left\{E^{\lambda_{i}}\right\}$ are called \emph{the associated Lyapunov subspaces} and the splitting is called the Lyapunov splitting of $TM$ with respect to $\mu$,
By ergodicity, $\lambda_{i}$ and $\dim E^{\lambda_{i}}$ are constant $\mu$ almost everywhere.
A point $x\in M$ for which the Lyapunov splitting is defined is called a \emph{bi-regular} point.

We will order the Lyapunov exponents as $$\lambda_{k}>\lambda_{k-1}>\ldots >\lambda_{1}> 0 > \lambda_{-1} >\ldots \lambda_{-r}.$$

For each bi-regular point $x\in M$, we define the following sets of $M$
\begin{equation*}
    W^{\geq \lambda_{i}}(x) = \left\{ y\in M \mid \limsup_{t\to\infty} \frac{1}{t}\log d(g_{-t}.x, g_{-t}.y) \leq -\lambda_{i} \right\}.
\end{equation*}
A fundamental result of Y. Pesin~\cite{Pesin_1976} states that for $\lambda_{i}>0$ these are $C^{\infty}$-immersed smooth manifolds. They are called the \emph{$i$'th-unstable manifold of $g_{t}$ at $x$}.
Similarly, we define the \emph{stable submanifolds} at $x$ as
\begin{equation*}
    W^{\leq \lambda_{i}}(x) = \left\{ y\in M \mid \limsup_{t\to\infty} \frac{1}{t}\log d(g_{t}.x, g_{t}.y) \leq -\lambda_{i} \right\},
\end{equation*}
for $\lambda_{i}<0$.
We have that $W^{\geq \lambda_{i+1}}(x)$ is a subset of $W^{\geq \lambda_{i}}(x)$.

We define \emph{the unstable manifold} of $g_{t}$ at $x$ to be
\begin{equation*}
    W^{u}(x) = W^{\geq \lambda_{1}}(x).
\end{equation*}
Similarly, we define the \emph{stable} manifold as 
\begin{equation*}
    W^{s}(x) = W^{\leq \lambda_{-1}}(x).
\end{equation*}
Their tangent spaces are denoted $E^{u}, E^{s}$ respectively.

\begin{standassm}\label{standassm}
Throughout this paper, we will make the following assumptions:
\begin{itemize}
    \item There exists at-least \emph{two positive Lyapunov exponents}, namely $0<\lambda_{1}<\lambda_{2}<\ldots$. 
    \item $\dim E^{\lambda_{1}}=1$, namely the subspace corresponding the \emph{least positive} Lyapunov exponent is \emph{simple}.
    \item The only Lyapunov exponent equal to zero is in the $g_{t}$-flow direction. 
    \item $\dim E^{s}\geq 1$, namely the set of negative Lyapunov exponents is non-empty.
\end{itemize}
\end{standassm}

We make the following definition.
\begin{defn}
The \emph{fast-unstable} manifold through $x$, where $x$ is a bi-regular point, is defined to be
\begin{equation*}
    W^{uu}(x) = W^{\geq \lambda_{2}}(x).
\end{equation*}
\end{defn}
By our second assumption, for any bi-regular point $x$, we have that $W^{uu}(x)$ is a co-dimension 1 measurable lamination of $W^{u}(x)$, which is tangent to $E^{uu}$. This follows by using normal forms, as in \S~\ref{subsec:normal-forms}.

In view of our third assumption, we may define the \emph{center-stable and center-unstable manifolds} as
\begin{equation*}
    W^{cs}(x) = \bigcup_{t\in \mathbb{R}} g_{t}.W^{s}(x), \ W^{cu}(x) = \bigcup_{t\in \mathbb{R}} g_{t}. W^{u }(x).
\end{equation*}

Recall also the following definition.
\begin{defn}
A measure $\mu$ on $M$ is called a \emph{Sinai-Ruelle-Bowen measure} (or SRB measure) if its conditionals along the unstable manifolds $W^{u}$ are absolutely continuous with respect to the Lebesgue measure over $W^{u}$.
\end{defn}

\begin{defn}
We say that a $g_{t}$-invariant probability measure $\mu$ is a \emph{generalized $u$-Gibbs state} if its conditionals along the \emph{fast unstable} manifolds $W^{uu}$ are absolutely continuous with respect to the Lebesgue measure over $W^{uu}$. 
\end{defn}

It follows readily from the definition, and absolute continuity of $W^{uu}$ that every $SRB$ measure is a generalized $u$-Gibbs measure.
We refer the reader to~\cite{dolgopyat-u-gibbs-notes} for more material regarding $u$-Gibbs states.
In~\cite{PesinSinai1982}, considering partially-hyperbolic dynamics, Pesin and Sinai construct $u$-Gibbs measures by averaging smooth densities along expanding leaves.

The constructions of~\cite{Pesin_1976} discussed above are only \emph{measurable} in nature.
In particular, the radius of injectivity of the local manifolds $W^{\star}_{\text{loc}}$ is only a measurable function.
In view of Lusin's theorem, for any $\epsilon>0$, there exists a subset $M_{\epsilon}\subset M$ of measure $\mu(M_{\epsilon})>1-\epsilon$ for which all the measurable functions involved in defining these structures are continuous. In particular, these functions and the injectivity radius are of bounded size on $M_{\epsilon}$. Such sets are termed \emph{Lusin sets}. From now on, we will work in such a Lusin set.

Let $\mathcal{B}$ be a measurable partition, subordinate to $W^{u}$ with the one-sided Markov property. Such partitions are constructed in~\cite{Ledrappier_Strelcyn_1982, led-young}.
We denote by $\mathcal{B}(x)$ the atom containing $x$.
Using this partition, one may define \emph{conditional measures} with respect to this partition, which we denote by $\mu^{uu}_{x}$.
Using an analogous partition $\mathcal{B}^{-}$ subordinate to $W^{s}$, we may define the conditional measure $\mu^{s}_{x}$.
For a detailed discussion, see Section~\ref{sec:conditional-measures}.

Given this partition, for a generic point $x$, we define $W^{uu}_{\text{loc}}(x)$ as $W^{uu}(x) \cap \mathcal{B}(x)$. Similarly, we define $W^{s}_{\text{loc}}(x)$ as $W^{s}_{\text{loc}}(x) = W^{s}(x) \cap \mathcal{B}^{-}(x)$.
We define $W^{cs}_{\text{loc}}(x) = \cup_{t\in [-1,1]}g_{t}W^{s}_{\text{loc}}(x)$. 

The following definition plays a key role in our theorem.

\begin{defn}\label{def:QNI2}
A system $(M,g_{t},\mu)$ satisfying Assumptions~\ref{standassm} has the \emph{quantitative non-integrability} property (QNI) if:
\begin{itemize}
\item There is $\alpha>0$ and,
\item For every $\epsilon>0$ a subset $\mathcal{X} \subset M$ of measure $\mu(\mathcal{X})>1-\epsilon$ and, 
\item For every $\nu > 0$ a constant $C:=C(\nu,\epsilon)$ and a constant $t_0 =~ t_0(\nu,\epsilon)$ so that: 
\end{itemize}

\noindent Suppose $x\in \mathcal{X}$ and $t > t_0$ is such that $g_{t}x \in \mathcal{X}$ and $g_{-t}x \in \mathcal{X}$ then:

There is a subset $S_x \subset g_{t}(W^{s}_\text{loc})(g_{-t}x)$ with 
$$\mu^{s}_x(S_x) > (1-\nu) \mu^{s}_x(g_{t}W^{s}_\text{loc}(g_{-t}x))$$ 
with the following property:

For all $y \in S_x$ there exists $U_y \subset  g_{-t}(W^{uu}_\text{loc}(g_{t}x))$ with $$\mu^{uu}_x(U_y) > (1-\nu) \mu^{uu}_x(g_{-t}(W^{uu}_\text{loc}(g_{t}x))$$ so that if $z \in U_y$ then
\begin{equation}\label{eq:QNI-full-def}
d(W^{uu}_\text{loc}(y), W^{cs}_\text{loc}(z)) > C  e^{-\alpha t}.
\end{equation}

\end{defn}

The above Definition can be adapted to diffeomorphisms by taking a suspension construction, or alternatively use discrete times and modifying changing \eqref{eq:QNI-full-def} to a condition over the stable manifold. 

An alternative slightly more restrictive definition of the QNI condition is presented in Appendix~\ref{app:QNI-cond}.

Now we may state our main measure rigidity theorem.
\begin{thm}\label{thm:measure-classification}
Assume $(M,g_{t},\mu)$ that $\mu$ satisfies Assumptions~\ref{standassm} and $\mu$ is a generalized $u$-Gibbs measure with respect to $W^{uu}$. If $\mu$ satisfy the QNI condition, then $\mu$ is an SRB measure.
\end{thm}

We refer the reader to \S\ref{sec:backgroud} for a detailed account of the definitions involved in the discussion that follows.
An Anosov flow is a hyperbolic flow over a compact $N$-dimensional Riemannian manifold $M$; at every point $x\in M$, the tangent bundle admits a splitting 
\begin{equation*}
    T_{x}M=E^{s}(x)\oplus E^{0}(x)\oplus E^{u}(x)
\end{equation*}
to \emph{contracting, neutral} and \emph{expanding} parts, respectively, such that there exists $\lambda_{s}<0<\lambda_{u}$ for which
\begin{itemize}
    \item $\left\lVert g_{t}v\right\rVert \leq e^{\lambda_{s}t}\left\lVert v \right\rVert$ for all $v\in E^{s}$, \\
    \item $\left\lVert g_{t}v\right\rVert \geq e^{\lambda_{s}t}\left\lVert v \right\rVert$ for all $v\in E^{s}$, \\
    \item The $g_{t}$ action over $E^{0}$ is isometric and $\dim E^{0}=1$.
\end{itemize}

SRB measures exist in the case of Anosov system~\cite{Sinai3,young-1-central}.
Furthermore, in the case of transitive Anosov systems, such measures are unique, ergodic and mixing.
In the case where volume is preserved, the Anosov system is transitive and the unique SRB measure is volume.

\begin{cor}
Suppose $(M,f,\mu)$ is an \emph{Anosov diffeomorphism or a flow} satisfying Assumptions~\ref{standassm} and $\mu$ is a generalized $u$-Gibbs measure satisfying the QNI condition, then $\mu$ is SRB, and it is unique if the system is transitive.
\end{cor}

It is not known which classes of partially hyperbolic systems support $SRB$ measures in general.
In case of partially hyperbolic systems, our Theorem~\ref{thm:measure-classification} shows that generalized $u$-Gibbs measures that satisfy the QNI condition are actually $SRB$, hence such systems admit $SRB$ measures.

\begin{defn}\label{defn:dominated-splitting-eu}
Consider a manifold $M$ equipped with an Anosov flow $g_{t}$.
Assume moreover that the expanding subspace admits a \emph{dominated splitting} for every $x\in M$ as $$ E^{u}(x) = E^{uu}(x)\oplus E^{1}(x)$$ where $E^{1}(x)$ is one-dimensional, and being expanded in a slower rate than $E^{uu}(x)$, namely there exists a $t_{0}>0$ such that for any two unit vectors $u\in E^{uu}, u'\in E^{1}$, for any $t>t_0$ we have that
\begin{equation*}
    \frac{\left\lVert g_{t}u\right\rVert}{\left\lVert g_{t}u'\right\rVert} \geq 2.
\end{equation*}
The system $(M,g_{t})$ is called \emph{highly quantitatively non-integrable} (HQNI) if every $g_{t}$-invariant and ergodic generalized $u$-Gibbs state with respect to $E^{uu}$ satisfies the QNI property. 
\end{defn}
We give an example of such a flow (including an idea how to produce those) in \S\ref{sec:applications}.
For such class of manifolds we can deduce the following \emph{pointwise} equidistribution theorem, in the spirit of~\cite[Theorems~$2.6, 2.10$]{emm}

\begin{thm}\label{thm:equi}
Assume $(M,g_{t})$ is a HQNI flow.
Moreover, assume that the normal forms coordinates on $W^{>1}_{\text{loc}}(x)$ are \emph{linear}, for all $x$ in $M$.
Then for every $x\in M$, for $\mu^{uu}_{x}$ almost every $y\in W^{uu}(x)$, we have
\begin{equation}\label{eq:empherical-def}
    \frac{1}{T}\int_{t=0}^{T}f(g_{t}.y)dt  \to \int_{M}fdm,
\end{equation}
for any $f\in C(M)$, where $m$ is the unique SRB measure of the system $(M,g_{t})$.
\end{thm}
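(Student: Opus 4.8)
The plan is to first identify $\mu$ with the unique SRB measure $m$, and then to upgrade the resulting measure-theoretic statement to a genuinely pointwise one by a Hopf-type argument resting on absolute continuity of conditionals and of stable holonomies. So first I would pin down the measure: by the definition of HQNI, every $g_{t}$-invariant ergodic generalized $u$-Gibbs state satisfies the QNI condition, hence is an SRB measure by Theorem~\ref{thm:measure-classification}; since a transitive Anosov flow admits a unique SRB measure, any such measure must equal $m$. If $\mu$ is not assumed ergodic one applies this to its ergodic components, which remain generalized $u$-Gibbs states --- the absolute continuity of the $W^{<m}$-conditionals being inherited by the ergodic decomposition, exactly as for ordinary $u$-Gibbs states --- and concludes $\mu=m$. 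In particular $\mu$ is fully supported, so for \emph{every} $x\in M$ the conditional $\mu^{u}_{x}$ is well defined via the explicit SRB density along $W^{u}(x)$, and it is mutually absolutely continuous with the leafwise Riemannian measure $\mathrm{Leb}_{W^{u}(x)}$. It therefore suffices to show that for every $x\in M$ the Birkhoff basin
\[
B=\bigl\{\, p\in M :\ \tfrac{1}{T}\int_{0}^{T}f(g_{t}.p)\,dt \to \int_{M}f\,dm \ \text{ for all } f\in C(M) \,\bigr\}
\]
has full $\mathrm{Leb}_{W^{u}(x)}$-measure in the unstable leaf through $x$.

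Next I would run the Hopf argument. The set $B$ is Borel (use a countable dense family in $C(M)$), is $g_{t}$-invariant, and is saturated by the strong-stable manifolds: if $p\in B$ and $q\in W^{s}(p)$ then $\dist(g_{t}.p,g_{t}.q)\to 0$, so by uniform continuity of the test functions $q\in B$; together with $g_{t}$-invariance this makes $B$ saturated by the weak-stable manifolds $W^{c-s}$. Since $m=\mu$ is $g_{t}$-ergodic, Birkhoff's theorem gives $m(B)=1$; and because $m$ is an SRB measure, disintegrating $m$ along a measurable partition subordinate to $W^{u}$ and using that its unstable conditionals are equivalent to Lebesgue yields --- by the classical argument showing that SRB measures are physical --- that $m$-almost every leaf $W^{u}(x)$ satisfies $\mathrm{Leb}_{W^{u}(x)}\bigl(W^{u}(x)\setminus B\bigr)=0$. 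Call such a leaf \emph{good}.

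The remaining, and conceptually crucial, step is to pass from ``$m$-almost every leaf is good'' to ``every leaf is good''. Fix an arbitrary $x\in M$ and a point $h.x\in W^{u}(x)$, and choose a flow box $U$ around $h.x$ with its local product structure. Because the flow is transitive, $m$ has full support, so $m(U)>0$ and $U$ contains an unstable plaque $P$ lying on a good leaf. The weak-stable holonomy $\pi$ inside $U$, from $P$ to the unstable plaque of $x$ through $h.x$, is absolutely continuous --- a classical fact in the theory of Anosov flows --- so it maps Lebesgue-null sets to Lebesgue-null sets; moreover $\pi(z)\in W^{c-s}(z)$ for all $z$, so the $W^{c-s}$-saturation of $B$ forces $\pi$ to send the good part of $P$ into $B$. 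Hence a plaque of $W^{u}(x)$ around $h.x$ meets $B$ in full Lebesgue measure; covering $W^{u}(x)$ by countably many such plaques gives $\mathrm{Leb}_{W^{u}(x)}\bigl(W^{u}(x)\setminus B\bigr)=0$. Since $x$ was arbitrary and $\mu^{u}_{x}$ is equivalent to $\mathrm{Leb}_{W^{u}(x)}$, this is exactly \eqref{eq:empherical-def}.

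As for where the difficulty lies: the hard analytic work is already packaged in Theorem~\ref{thm:measure-classification}, and once $\mu$ is identified with $m$ the rest is soft. The two points that still need care are (i) the assertion that the ergodic components of a generalized $u$-Gibbs state are again generalized $u$-Gibbs states, so that Theorem~\ref{thm:measure-classification} genuinely applies to them --- this mirrors the classical statement for $u$-Gibbs measures and follows from the stability of leafwise absolute continuity under ergodic decomposition; and (ii) the globalization in the third paragraph, which is exactly where the ``for every $x$'' --- rather than merely ``$\mu$-almost every $x$'' --- strength of the statement is obtained, and which rests on the absolute continuity of the weak-stable holonomy between unstable leaves. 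I expect (ii) to be the main obstacle to write out carefully.
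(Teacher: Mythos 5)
Your proposal is correct, but it takes a genuinely different route from the paper's. The paper follows the Chaika--Eskin template: it fixes $x$, considers the family of empirical measures $\frac{1}{T}\int_{0}^{T}\delta_{g_{t}.u.x}\,dt$ for $u$ along $W^{uu}(x)$, and proves a correlation estimate $\bigl|\int f_{t}f_{s}\bigr|\ll e^{-\gamma|t-s|}$ for $f_{t}(u)=\phi(g_{t}.u.x)-\phi(g_{t}h_{\alpha(t)}.u.x)$; a law-of-large-numbers lemma then shows that for Lebesgue-a.e.\ $u$ every weak-$\star$ limit of these empirical measures is $W^{uu}$-invariant, hence a $u$-Gibbs state, and only at that point does HQNI together with Theorem~\ref{thm:measure-classification} enter, to classify each ergodic component of that \emph{limit} as the SRB measure $m$. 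You instead apply HQNI and Theorem~\ref{thm:measure-classification} upstream, to the given $\mu$ itself (via its ergodic decomposition), to conclude $\mu=m$, and then run the classical Hopf argument using absolute continuity of weak-stable holonomies to show the Birkhoff basin of $m$ has full Lebesgue measure in every unstable leaf. Both arguments invoke the measure-classification theorem through an ergodic decomposition; the difference is which measure you decompose. Your route is shorter and uses only classical ingredients, but it leans on the Anosov--Sinai absolute continuity of stable holonomies --- a structure that is genuinely unavailable in the moduli-space setting that motivated this result, and which the paper's correlation/LLN argument deliberately avoids so the proof pattern can transfer. Your two flagged caveats are real: (i) that ergodic components of a generalized $u$-Gibbs state remain generalized $u$-Gibbs does hold (the invariant $\sigma$-algebra is, mod null sets, coarser than the measurable hull of the $W^{<m}$-plaque partition, by the backward Hopf/Birkhoff argument along unstable plaques), and (ii) the globalization step needs transitivity of the flow so that $m$ has full support --- transitivity is implicit in the theorem's reference to ``the unique SRB measure,'' so this is consistent with the paper's standing assumptions but worth making explicit.
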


We define normal forms in \S~\ref{subsec:normal-forms}.
We remark here that the assumption about linearity of the normal forms over $W^{uu}_{\text{loc}}$ is done to simplify the argument. It seems possible to remove this assumption using more calculations via general  normal forms coordinates.
Moreover, the assumption is automatically filed in the case of $\dim E^{u}=2$ with two distinct positive Lyapunov exponents.

A main example we consider in this paper is the Borel-Smale Anosov diffeomorphism discussed in Example~\ref{ex:borel-smale} and certain fiber-perturbations of it, as discussed in Example~\ref{ex:borel-smale-pert}.
This is a volume-preserving Anosov system for which the space is the nilmanifold $N\times N/\Gamma$ where $N$ is the $3$-dimensional Heisenberg group and $\Gamma$ is some irreducible lattice.
Considering a suspension of this class of diffeomorphisms, for appropriately chosen parameters, we show in Example~\ref{ex:borel-smale} that any generalized $u$-Gibbs state for $E^{>1}$ satisfy the QNI condition, resulting in a HQNI system.
As an application, our measure classification theorem shows that any $u$-Gibbs measure must be the Haar measure, and the equidistribution result mentioned above is applicable.
Moreover, this system is also an example of a \emph{fibered system}, where the basis is the four dimensional torus $\mathbb{T}^{4}$ and the fiber is $\mathbb{T}^{2}$, where the action on both basis and fibers is Anosov, but the action over the fibers is \emph{slower} than the basis.
We show in Example~\ref{ex:borel-smale-pert} that for certain small $C^{\infty}$ perturbations of the fiber dynamics, our QNI and HQNI assumptions hold.
A similar example using the fibered system $ASL_{2}(\mathbb{R})/ASL_{2}(\mathbb{Z})$ is described in Example~\ref{ex:asl2}.

Our adaption of the factorization technique requires non-trivial input in the form of certain quantitative non-integrability estimates over measurable laminations of the system. We verified this condition for certain perturbations of homogeneous systems. A promising direction of research is to try to verify this condition to other types of dynamical systems.

We consider the set $\mathcal{A}^{k}\subset \text{Diff}^{k}(\mathbb{T}^{3})$. This is the set of $C^{k}$ Anosov maps with dominated splitting as in Definition~\ref{defn:dominated-splitting-eu}.

In an upcoming paper of Avila-Croviser-Eskin-Potrie-Wilkinson-Zhang~\cite{ACEPWZ}, they will show that for any map in $\mathcal{A}^{\infty}(\mathbb{T}^{3})$ either $E^{s}, E^{uu}$ are jointly integrable, or any $u$-Gibbs state is SRB.
Their proof involves first showing a dichotomy that either the system satisfies the QNI condition, or the Franks-Manning conjugacy preserves the fast unstable foliation and then applying the results of this paper.

As minimality of $W^{uu}$ is $C^1$-open and $C^\infty$-dense inside the set $\mathcal{A}^{\infty}$ as shown by Avila-Crovisier-Wilkinson, it seems likely that systems that satisfy a QNI condition are open and dense in that set.

Since this paper was sent to publication, two interesting results related to the current paper have appeared.

In~\cite{Alvarez_Leguil_Obata_Santiago_2022}, Alvarez-Leguil-Obata-Santiago study the problem of rigidity of $3D$ Anosov diffeomorphisms over $\mathbb{T}^{3}$. They show that for any perturbation of a \emph{conservative map} in $\mathcal{A}^{2}$, either a QNI-like condition holds or $E^{s}, E^{uu}$ are jointly integrable. The volume preserving assumption give a bunching condition over the Lyapunov spectrum, allowing one to deduce extra smoothness of a relevant holonomy map.
In their restrictive setting, they achieve a factorization theorem for substantially less regular systems.

In~\cite{Crovisier_Obata_Poletti_2022}, Crovisier-Obata-Poletti construct a smooth system on $\mathbb{T}^4$ in the sense of the Borel-Smale example discussed in Example~\ref{ex:borel-smale}. These systems cannot be constructed in a purely algebraic form.
They apply a similar computation to the one in Example~\ref{ex:borel-smale-pert} (c.f.~\cite[\S~$5$]{Crovisier_Obata_Poletti_2022}), together with earlier results of Obata, in order to conclude that the conclusion that $u$-Gibbs measures are $SRB$ is prevalent for these system as well.

Of a particular interest is studying the QNI concept in the settings of analytic maps. Preliminary results for holomorphic maps over compact K\"{a}hler surfaces (related to to~\cite{brown-hertz}), appeared in~\cite{Cantat_Dujardin_2021}. In the settings of Anosov diffeomorphisms, Gogolev et al. made some numerical studies in~\cite{gogolev}

\subsection*{Organization of the paper}
In \S\ref{sec:backgroud} we recall certain preliminaries from smooth dynamics and ergodic theory.
In \S\ref{sec:conditional-measures} we construct certain conditional measures on the unstable manifolds, using normal forms coordinates, which will allow us to show extra invariance.
Section \S\ref{sec:factorization} is the most technical part of the paper, which revoles around construction of special linear operators $A(q,u,\ell,t)$ that will allow us to measure the relative divergence between the two strong-unstable manifolds.
In \S\ref{sec:bilip} we prove the crucial bilipschitz estimates regarding some stopping time which are defined by the factorization theorem.
In \S\ref{sec:8-pts} we employ the Eskin-Mirzakhani technique in order to conclude extra invariance.
In \S\ref{sec:applications} we provide several examples and applications of our measure rigidity result towards equidistribution problems.
Appendix~\ref{app:factorization-details} gives the technical details regarding the ``factorization theorem'' used in~\S\ref{sec:factorization}.
Appendix~\ref{app:avoiding-tech} gives the proofs of several technical lemmata from \S\ref{sec:conditional-measures}.
Appendix~\ref{app:QNI-cond} discuss a slightly more restrictive but natural QNI condition.

\subsection*{Acknowledgments}
The author would like to express his deep appreciation to Alex Eskin for explaining his important work together with Maryam Mirzakhani about measure classification and providing many important insights about the factorization technique.
The author also would like to thank Aaron Brown for useful suggestions and comments, primarily about the usage of normal forms coordinates.
It is a pleasure to thank Amie Wilkinson, Elon Lindenstrauss, Federico Rodriguez Hertz, Clark Butler and Rafael Potrie for helpful and motivating discussions. The author also would like to thank the referees for providing many useful suggestions and comments that helped to improve the paper and its presentation.

\section{Background from smooth dynamics}\label{sec:backgroud}
\subsection{Basics}

Let $M$ be a closed compact manifold, equipped with a smooth $1$-parameter flow $\left\{g_{t} \right\}_{t\in\mathbb{R}}$.

Assume from now on that $(M,g_{t})$ is equipped with a Borel probability measure $\mu$ which is $g_{t}$-invariant and ergodic.

\subsection{Oseledets' multiplicative ergodic theorem}
We recall the following version of Oseledets' multiplicative ergodic theorem.
\begin{thm}[Oseledets' ergodic theorem]\label{thm:oseledets}
 Assume that $V\to (\Omega,\beta,\mu,T)$ is a cocycle over an ergodic probability measure-preserving system.
 Assume that $V$ is equipped with a metric $\norm{-}$ on each fiber such that
 \begin{align}
   \label{eqn:cond_L1_bdd_cocycle}
   \int_{\Omega} \log^+\norm{T_\omega}_{op} d\mu(\omega) < \infty.
 \end{align}
 Here $\log^+(x):=\max(0,\log x)$ and $\norm{-}_{op}$ denotes the operator norm of a linear map between normed vector spaces.
 
 Then there exist real numbers $\lambda_1<\lambda_2<\cdots <\lambda_k$ (with perhaps $ \lambda_1 = -\infty $) and $T$-invariant subbundles of $V$ defined for a.e. $\omega\in \Omega$:
 \[
 0\subsetneq V^{\leq \lambda_1} \subsetneq \cdots \subsetneq V^{\leq \lambda_k} = V
 \]
 such that for vectors $v\in V^{\leq \lambda_{i+1}}_\omega\setminus V^{\leq \lambda_{i}}_\omega$ we have
 \begin{align}
 \label{eqn:growthofv}
   \lim_{N\to \infty} \frac 1N \log \norm{T^N v} \to \lambda_i.
 \end{align}
\end{thm}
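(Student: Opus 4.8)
The plan is to deduce this from Kingman's subadditive ergodic theorem, applied both to the cocycle $T$ and to all of its exterior powers. First, fix a measurable trivialization of the bundle (embed $V$ measurably in a trivial bundle $\Omega\times\R^{d}$, so that each $T_\omega$ is a $d\times d$ matrix and $T^{N}_\omega=T_{T^{N-1}\omega}\cdots T_{T\omega}T_\omega$). The function $f_{N}(\omega)=\log\norm{T^{N}_\omega}_{op}$ is subadditive, $f_{N+M}(\omega)\le f_{N}(\omega)+f_{M}(T^{N}\omega)$, and $f_{1}^{+}\in L^{1}(\mu)$ by hypothesis \eqref{eqn:cond_L1_bdd_cocycle}; Kingman's theorem together with ergodicity of $T$ yields a constant $\lambda_{1}=\inf_{N}\tfrac1N\int f_{N}\,d\mu\in[-\infty,\infty)$ with $\tfrac1N f_{N}(\omega)\to\lambda_{1}$ for a.e.\ $\omega$. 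Running the same argument with the induced cocycle $\wedge^{p}T$ on $\wedge^{p}V$ produces a.e.\ limits $\tfrac1N\log\norm{\wedge^{p}T^{N}_\omega}_{op}\to\Lambda_{p}$ (with $\Lambda_{0}:=0$). Since $\norm{\wedge^{p}A}_{op}$ equals the product of the $p$ largest singular values of $A$, the sequence $\Lambda_{p}-\Lambda_{p-1}$ is nonincreasing in $p$; its distinct values, in decreasing order, are the sought-for exponents $\lambda_{1}>\lambda_{2}>\cdots>\lambda_{k}$, and I let $d_{i}$ be the number of indices $p$ with $\Lambda_{p}-\Lambda_{p-1}=\lambda_{i}$.

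Next I construct the filtration. For a.e.\ $\omega$ and $v\in V_\omega\setminus\{0\}$ set $\chi(\omega,v)=\limsup_{N}\tfrac1N\log\norm{T^{N}_\omega v}$; one checks that $\chi(\omega,v)\le\lambda_{1}$ always, that for each threshold the set $V^{\le\lambda_{i}}_\omega:=\{v:\chi(\omega,v)\le\lambda_{i}\}\cup\{0\}$ is a linear subspace depending measurably on $\omega$, and that it is forward invariant, $T_\omega V^{\le\lambda_{i}}_\omega\subseteq V^{\le\lambda_{i}}_{T\omega}$, with a.e.\ constant dimension by ergodicity. The content of the theorem is then the dimension count $\dim V^{\le\lambda_{i}}_\omega=d_{i}+d_{i+1}+\cdots+d_{k}$ together with the assertion that a vector $v\notin V^{\le\lambda_{i+1}}_\omega$ grows at the \emph{exact} rate $\lambda_{i}$, i.e.\ the $\limsup$ defining $\chi$ is a genuine limit and equals $\lambda_{i}$. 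The upper bound $\limsup\le\lambda_{i}$ on $V^{\le\lambda_{i}}_\omega$ is immediate from the definition; the matching lower bound is obtained by downward induction on $i$, comparing the growth of $\norm{T^{N}_\omega v}$ with that of $\wedge^{p}T^{N}_\omega$ for $p=d_{i}+\cdots+d_{k}$ applied to $v$ wedged with a basis of $V^{\le\lambda_{i+1}}_\omega$ (whose growth rate $\Lambda_{p}-\lambda_{i}=\lambda_{i+1}d_{i+1}+\cdots+\lambda_{k}d_{k}$ is already known by the inductive hypothesis), and upgrading the resulting subsequential estimate to a full limit via a Borel--Cantelli argument using the $L^{1}$ control on $f_{1}^{+}$.

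I expect the main obstacle to be exactly this last step: pinning down the dimensions of the $V^{\le\lambda_{i}}_\omega$ and showing that vectors strictly outside the next stratum grow at the precise rate $\lambda_{i}$, not merely at a rate bounded by $\lambda_{i}$. This is where the exterior-power exponents $\Lambda_{p}$ are genuinely used, and where one must control non-degeneracy of angles between the images $T^{N}_\omega v$ and $T^{N}_\omega V^{\le\lambda_{i+1}}_\omega$; everything preceding it (Kingman, the measurable trivialization, measurability and forward invariance of the $V^{\le\lambda_{i}}_\omega$) is comparatively soft. If the exterior-power bookkeeping becomes unwieldy, I would fall back on Raghunathan's approach: fix an inner product on each fibre, study the positive semidefinite operators $\big((T^{N}_\omega)^{*}T^{N}_\omega\big)^{1/2N}$, prove their a.e.\ convergence to an operator $\Lambda(\omega)$ by an induction on dimension that applies Kingman to the largest and smallest singular values, and then read off $e^{\lambda_{i}}$ as the eigenvalues of $\Lambda(\omega)$ and $V^{\le\lambda_{i}}_\omega$ as the span of its eigenspaces with eigenvalue $\le e^{\lambda_{i}}$.
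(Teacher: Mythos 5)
The paper states this theorem as standard background (``We recall the following version of Oseledets' multiplicative ergodic theorem'') and supplies no proof of its own, so there is no in-paper argument to compare your proposal against.

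That said, your sketch is a recognizable and essentially correct outline of the Kingman/exterior-powers route to the one-sided (filtration) form of Oseledets' theorem, with Raghunathan's singular-value approach correctly identified as the fallback. The soft parts are fine: subadditivity of $\log\norm{T^{N}_\omega}_{op}$ and of $\log\norm{\wedge^{p}T^{N}_\omega}_{op}$, the integrability hypothesis feeding Kingman, the monotonicity of $\Lambda_{p}-\Lambda_{p-1}$ via the singular-value characterization of $\norm{\wedge^{p}A}_{op}$, and the definition of $V^{\leq\lambda_{i}}_\omega$ as a sublevel set of $\chi(\omega,v)=\limsup_N \tfrac1N\log\norm{T^N_\omega v}$ (which is a subspace because $\chi(\omega,\cdot)$ satisfies the ultrametric-type inequality $\chi(\omega,v+w)\leq\max(\chi(\omega,v),\chi(\omega,w))$). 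You are also right to flag the real difficulty: upgrading $\limsup$ to $\lim$ and getting the lower bound $\liminf_N\tfrac1N\log\norm{T^N_\omega v}\geq\lambda_i$ for $v\notin V^{\leq\lambda_{i+1}}_\omega$. The wedge-product comparison you propose needs control over the angle between $T^N_\omega v$ and $T^N_\omega V^{\leq\lambda_{i+1}}_\omega$, because $\norm{\wedge^{p}T^N_\omega(v\wedge e_1\wedge\cdots\wedge e_{p-1})}$ can decouple from $\norm{T^N_\omega v}\cdot\norm{\wedge^{p-1}T^N_\omega(e_1\wedge\cdots\wedge e_{p-1})}$ by a factor as small as the sine of that angle; one must show this angle decays at most subexponentially. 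This is precisely where the Raghunathan/Ruelle device of proving a.e.\ convergence of $\bigl((T^N_\omega)^{*}T^N_\omega\bigr)^{1/2N}$ pays off, since it packages the angle control into the convergence of the eigenspaces of those self-adjoint operators. As a proof sketch your outline is faithful to the standard argument; as a complete proof it still owes the angle estimate or, equivalently, the convergence of the Raghunathan operators.
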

The flag of the subspaces is known as the \emph{forward flag}.

As we assume our transformation is invertible, applying the same theorem to its inverse, one recovers the \emph{backward flag}.

Refining both, one may recover the \emph{Oseledets splitting} of $V$.

Applying Oseledets' theorem to the derivative cocycle defined by the $g_{t}$ flow over the tangent bundle $TM$ yields
\begin{cor}
For $\mu$-almost every $x\in M$, there exists a splitting of $T_{x}M$ named the \emph{Lyapunov splitting}, such that
\begin{equation*}
    T_{x}M = \oplus_{i} E^{\lambda_{i}}(x)
\end{equation*}
and for every $v\in E^{\lambda_{i}}(x)$ we have that
\begin{equation*}
    \lim_{t\to\infty}\frac{1}{t}\log\lVert g_{t}.v\rVert = \lambda_{i}.
\end{equation*}
\end{cor}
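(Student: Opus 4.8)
\emph{Proof sketch.} The plan is to deduce this from the discrete-time multiplicative ergodic theorem (Theorem~\ref{thm:oseledets}) applied to the time-one map $g_1$, and then to bootstrap the resulting data up to a genuinely flow-equivariant splitting carrying continuous-time growth estimates. First I would check that the hypotheses are met: since $M$ is compact and $g_1$ is a $C^1$ diffeomorphism, both $\norm{d_x g_1}_{op}$ and $\norm{d_x g_{-1}}_{op}$ are bounded uniformly in $x\in M$, so the integrability condition \eqref{eqn:cond_L1_bdd_cocycle} holds trivially for the derivative cocycle $A(x)=d_x g_1$ and for its inverse. Applying Theorem~\ref{thm:oseledets} to $A$ gives the backward flag, applying it to $d g_{-1}$ gives the forward flag, and intersecting them produces a $d g_1$-invariant splitting $T_xM=\oplus_i E^{\lambda_i}(x)$ with $\frac1n\log\norm{d g_n v}\to\lambda_i$ as $n\to\pm\infty$ for $0\neq v\in E^{\lambda_i}(x)$. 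One caveat to flag here: the time-one map need not be ergodic even though the flow is, so I would invoke the standard not-necessarily-ergodic form of the multiplicative ergodic theorem, which yields measurable $g_1$-invariant exponent functions $\lambda_i(\cdot)$, and only afterwards argue that they are a.e.\ constant.

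Second, I would upgrade from integer time to real time. Writing $t=n+s$ with $n=\lfloor t\rfloor$ and $s\in[0,1)$, compactness of $M\times[0,1]$ together with continuity of $(x,s)\mapsto d_x g_s$ supplies a constant $C_0\geq 1$ with $C_0^{-1}\norm{w}\leq\norm{d g_s w}\leq C_0\norm{w}$ for all $x$, all $s\in[0,1]$ and all $w\in T_xM$. Since $d g_t v=d g_s(d g_n v)$, this pins $\norm{d g_t v}$ to within the factor $C_0$ of $\norm{d g_n v}$, whence $\frac1t\log\norm{d g_t v}\to\lambda_i$; translating this limit into the two-sided inequality $e^{(\lambda_i-\varepsilon)t}\norm{v}\ll\norm{d g_t v}\ll e^{(\lambda_i+\varepsilon)t}\norm{v}$ is then immediate, the implied constants absorbing the bounded range of $t$ for which the asymptotic has not yet taken hold.

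Third, and this is where the actual content lies, I would promote the splitting to one equivariant for the whole flow rather than merely for $g_1$. For each fixed $s$, the subspace $d g_s(E^{\lambda_i}(x))\subset T_{g_s x}M$ consists precisely of the vectors whose forward and backward $d g_1$-growth rates equal $\lambda_i$, since conjugation by the boundedly distorting map $d g_s$ alters neither rate; hence by \emph{uniqueness} of the Oseledets splitting of $A$ one obtains $d g_s(E^{\lambda_i}(x))=E^{\lambda_i}(g_s x)$ and $\lambda_i(g_s x)=\lambda_i(x)$ for a.e.\ $x$. A Fubini/saturation argument over $s\in[0,1]$, combined with the $g_1$-invariance already in hand, yields a single flow-invariant conull set on which all of these identities hold for every $t\in\R$ simultaneously; in particular each $\lambda_i$ is invariant under the entire flow, hence constant by ergodicity of $\mu$. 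I expect this last step — exploiting uniqueness of the Oseledets decomposition plus a measure-theoretic saturation to recover flow-equivariance and the constancy of the exponents — to be the only non-routine part, the remainder being bookkeeping resting on the compactness of $M$.
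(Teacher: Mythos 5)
Your proof is correct and fills in, with the standard bookkeeping (time-one reduction, compactness to control the fractional part of $t$, uniqueness of the splitting to obtain flow-equivariance, and ergodicity of the flow to make the exponents constant), exactly the argument the paper treats as immediate when it says "applying Oseledets' theorem to the derivative cocycle defined by the $g_t$ flow yields" the corollary. The paper supplies no proof, so your writeup is simply the standard derivation of the continuous-time Oseledets theorem from the discrete-time statement (Theorem~\ref{thm:oseledets}), and it is the same route.
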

As we may think about the assignment taking a point $x\in M$ to the Lyapunov splitting of $T_{x}M$ or an appropriate flag (inside an appropriate Grassmanian) as a \emph{measurable section} of a smooth bundle, in view of Lusin's theorem, for every $\varepsilon>0$ there exists a compact subset of $M$ such that this section is continuous.
Hence for a compact set of arbitrarily large measure, we may bound those growth rates uniformly.

Moreover, as the individual Lyapunov spaces become orthogonal in the sense of \cite[Remark~$3.1.8$]{pesin}, by introducing the so-called ``Pesin norms'' again at the cost of discarding a subset of measure at-most $\varepsilon$ from this subspace there exists some $\theta$ such that the individual Lyapunov spaces are $\theta$-transverse at any point of our subset.
Furthermore, by restricting to a smaller set if needed, it follows from \emph{Pesin theory} that one may find a compact set of large measure for which all the geometry of the embedded disks $W^{uu}_{\text{loc}}(x)$ varies continuously and in a bounded way (c.f. \cite[Supplement $S.3$]{Katok1995}).
We will denote this set as $M_{Os,\theta}$.

Given $\varepsilon>0$ and a time $T'$ we may define the following subset of \emph{Oseledets regular points} $M_{\text{Os-reg},\varepsilon,T'}\subset M$ in the following manner:
\begin{equation*}
    M_{\text{Os-reg},\varepsilon,T'} = \left\{x\in M \mid \forall t>T' \ \lvert\{0\leq s \leq t \mid g_{t}.x\in M_{Os,\theta} \}\rvert \geq (1-2\varepsilon)\cdot t \right\}.
\end{equation*}

In words, a point $x$ is Oseledets' regular if for any large time $t>T'$, the proportion of the time the orbit $\left\{g_{s}.x\right\}_{0\leq s \leq t}$ spends inside the set of points with good Lyapunov splitting is greater than $1-2\varepsilon$.

In view of Birkhoff's pointwise ergodic theorem, applied to the system $(M,g_{t},\mu)$ towards the return times of $\mu$-almost every point to $M_{Os,\theta}$, we see that given $\varepsilon>0$, for $T'$ large enough,  $\mu(M_{\text{Os-reg},\varepsilon,T'}) \geq~1-~3\varepsilon$.

\subsection{Lyapunov norms}\label{sub:lyap-norm}
We will need the following result due to Y. Pesin:
\begin{thm}[\cite{pesin} \S$3.5$]
Let $T$ be a measurable cocycle over a flow $\{g_{t}\}$. 
Assume $x\in X$ is an Oseledets bi-regular point, and assume $\left\{ H_{i}(x)\right\}$ are the various Oseledets subspaces associated to $T$ in $x$.
There exists an inner product $\left<, \right>_{x}$ on $H_{i}(x)$ such that there exist constants $C_{1},C_{2}$ for which
\begin{equation*}
    C_{1}\cdot t \leq \log\left(\frac{\lVert (g_{t}).v \rVert_{x}}{\lVert v \rVert_{x}} \right) \leq C_{2} \cdot t
\end{equation*}
for any $0\neq v\in H_{i}(x)$ with the naturally associated norm $\lVert v \rVert_{x}^{2}~=~\left<v,v \right>_{x}$.
Moreover, distinct Oseledets subspaces are \emph{orthogonal} with respect to this inner-product.
\end{thm}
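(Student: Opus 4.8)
The plan is to build the Lyapunov inner product on each Oseledets subspace $H_i(x)$ by an orbit average that exactly cancels the exponential rate $\lambda_i$ and is damped by a slowly decaying factor ensuring convergence; the global inner product on $T_xM=\bigoplus_i H_i(x)$ (or on the fibre $V_x$, in the abstract cocycle formulation) is then obtained by declaring the $H_i(x)$ mutually orthogonal, so the final assertion of the theorem holds by construction. Fix a small $\varepsilon>0$ in advance. For $u,v\in H_i(x)$ set
\[
\langle u,v\rangle_{x,i}\ :=\ \int_{-\infty}^{\infty}\bigl\langle g_s.u,\,g_s.v\bigr\rangle_{g_s.x}\ e^{-2\lambda_i s-2\varepsilon\lvert s\rvert}\,ds ,
\]
where $\langle-,-\rangle_{g_s.x}$ is the given background metric on the fibre over $g_s.x$, and extend to $V_x$ by $\langle \oplus u_i,\oplus v_i\rangle_x:=\sum_i\langle u_i,v_i\rangle_{x,i}$, so that distinct Oseledets subspaces are orthogonal by fiat.

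First I would check that the integral converges. In the smooth setting the flow and its derivative cocycle are continuous, so $s\mapsto\norm{g_s.v}_{g_s.x}$ is continuous, hence locally bounded; for a general measurable cocycle the same local integrability follows from the integrability hypothesis of the Oseledets theorem (cf.\ \eqref{eqn:cond_L1_bdd_cocycle}) and the resulting tempering (Pesin $\varepsilon$-reduction) estimate. Thus only the tails matter, and this is exactly where bi-regularity of $x$ is used: for $0\neq v\in H_i(x)$ one has $\lim_{s\to+\infty}\tfrac1s\log\norm{g_s.v}_{g_s.x}=\lambda_i$ from the forward Oseledets theorem and $\lim_{s\to-\infty}\tfrac1s\log\norm{g_s.v}_{g_s.x}=\lambda_i$ from applying it to the inverse flow — precisely the content of the two-sided refinement to the Oseledets splitting. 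Equivalently $\log\norm{g_s.v}_{g_s.x}=\lambda_i s+o(\lvert s\rvert)$, so for $u=v$ the integrand equals $e^{2(\log\norm{g_s.v}_{g_s.x}-\lambda_i s)-2\varepsilon\lvert s\rvert}\le e^{-\varepsilon\lvert s\rvert}$ for $\lvert s\rvert$ large, and the integral is finite; the general case follows by Cauchy--Schwarz. Symmetry and bilinearity are clear, and positive-definiteness holds because the integrand is nonnegative and vanishes identically only if $v=0$ (each $g_s$ acts invertibly), so $\langle-,-\rangle_{x,i}$ is a genuine inner product.

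Next I would establish the growth estimate. Using the cocycle relation $g_s.(g_t.v)=g_{s+t}.v$ and substituting $r=s+t$,
\[
\langle g_t.v,\,g_t.v\rangle_{g_t.x,i}\ =\ e^{2\lambda_i t}\int_{-\infty}^{\infty}\norm{g_r.v}_{g_r.x}^{2}\,e^{-2\lambda_i r}\,e^{-2\varepsilon\lvert r-t\rvert}\,dr .
\]
Since $\bigl\lvert\,\lvert r-t\rvert-\lvert r\rvert\,\bigr\rvert\le\lvert t\rvert$, the weight $e^{-2\varepsilon\lvert r-t\rvert}$ differs from $e^{-2\varepsilon\lvert r\rvert}$ (the one defining $\norm{v}_{x,i}^2$) by a factor in $[e^{-2\varepsilon\lvert t\rvert},e^{2\varepsilon\lvert t\rvert}]$, so
\[
e^{2\lambda_i t-2\varepsilon\lvert t\rvert}\,\norm{v}_{x,i}^{2}\ \le\ \norm{g_t.v}_{g_t.x,i}^{2}\ \le\ e^{2\lambda_i t+2\varepsilon\lvert t\rvert}\,\norm{v}_{x,i}^{2}.
\]
Taking logarithms and halving gives $(\lambda_i-\varepsilon)\,t\le\log\bigl(\norm{g_t.v}_{g_t.x,i}/\norm{v}_{x,i}\bigr)\le(\lambda_i+\varepsilon)\,t$ for $t\ge 0$, which is the assertion with $C_1=\lambda_i-\varepsilon$ and $C_2=\lambda_i+\varepsilon$; the case $t\le0$ is symmetric. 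Finally, $x\mapsto\langle-,-\rangle_x$ is measurable by Fubini applied to the jointly measurable integrand, which is what makes the construction compatible with the later uses of Lusin's theorem.

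The one genuinely delicate point is the convergence of the defining integral as $s\to-\infty$: without the backward control supplied by bi-regularity the weight $e^{-2\lambda_i s-2\varepsilon\lvert s\rvert}$ need not dominate $\norm{g_s.v}_{g_s.x}^2$, and the construction collapses. Everything else is a change of variables together with the (elementary, in the compact smooth case) fact that the integrand is locally integrable; once those are in hand the inner product, its positivity, the two-sided exponential bound, and the orthogonality of distinct Oseledets subspaces all drop out simultaneously.
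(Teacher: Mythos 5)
Your construction is the standard Lyapunov inner product from Pesin's book (the very reference the paper cites), and the argument is correct: bi-regularity supplies the two-sided sub-exponential control $\log\norm{g_s.v}_{g_s.x}=\lambda_i s+o(|s|)$ that makes the defining integral converge, the change of variables $r=s+t$ together with $\bigl|\,|r-t|-|r|\,\bigr|\le|t|$ yields the two-sided exponential sandwich with $C_1=\lambda_i-\varepsilon$, $C_2=\lambda_i+\varepsilon$, and orthogonality of distinct Oseledets blocks holds by the direct-sum definition. The paper gives no proof of this statement, simply citing \cite{pesin}~\S3.5, and your argument reproduces that reference's construction.
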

The following Lemma is standard in the literature (c.f. \cite[Lemma~$2.7$]{eskin_lindenstrauss} \cite[Lemma~$4.17$]{eskin-mirzakhani}).
\begin{lem}[Norm comparison]\label{lem:norm-comp}
For every $\delta>0$ there exists some compact set $K\subset \Omega$ with $\mu(K)>1-\delta$ and $C(\delta)>0$ such that for any $x\in \Omega$ and $v\in V(x)$ we have
\begin{equation}
    C(\delta)^{-1}\cdot \lVert v \rVert \leq \lVert v \rVert_{V(x)} \leq C(\delta)\cdot \lVert v \rVert,
\end{equation}
where the middle norm is the Lyapunov norm and the other norms are arbitrary fixed norm defined over $V$.
\end{lem}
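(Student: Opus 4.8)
The plan is to recall the explicit ``tempering'' construction of the Lyapunov norm, to observe that the constant relating it to a background norm is a finite measurable function on $\Omega$, and then to invoke inner regularity of $\mu$.

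First I would recall the construction of $\norm{-}_{V(x)}$ (see \cite{pesin}): having fixed $\varepsilon>0$ as in the construction of the Lyapunov norm, define on each Oseledets subspace $H_{i}(x)$ an inner product by the convergent integral
\begin{equation*}
\norm{v}_{x,i}^{2} = \int_{\R} \norm{g_{t} v}^{2}\, e^{-2\lambda_{i} t - 2\varepsilon\lvert t\rvert}\, dt, \qquad v\in H_{i}(x),
\end{equation*}
(or the analogous sum over integer times), and declare the distinct $H_{i}(x)$ mutually orthogonal, so that $\norm{v}_{V(x)}^{2}=\sum_{i} \norm{v_{i}}_{x,i}^{2}$ for the Oseledets decomposition $v=\sum_{i} v_{i}$. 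By Theorem~\ref{thm:oseledets} applied to the derivative cocycle and to its inverse, for $\mu$-a.e.\ $x$ one has $\norm{g_{t} v}=e^{\lambda_{i} t + o(\lvert t\rvert)}\norm{v}$ as $\lvert t\rvert\to\infty$, uniformly over the unit sphere of $H_{i}(x)$; hence the integrand decays like $e^{-2\varepsilon\lvert t\rvert}$ up to a subexponential factor, the integral converges, and the contribution of $t$ near $0$ bounds it below, so there is a finite $c_{1}(x)$ with $c_{1}(x)^{-1}\norm{v}\le \norm{v}_{x,i}\le c_{1}(x)\norm{v}$ for all $i$ and all $v\in H_{i}(x)$.

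Next I would pass from these fiberwise estimates to a comparison of the assembled norms. For $\mu$-a.e.\ $x$ the subspaces $\{H_{i}(x)\}$ are pairwise transverse, and on the set $M_{Os,\theta}$ of the preceding subsection the angle between them is uniformly bounded below by $\theta$; consequently the coordinate projections $v\mapsto v_{i}$ along the Oseledets splitting have operator norm at most some finite $c_{2}(x)$. Combining this with the previous step, the distortion constant
\begin{equation*}
C(x) := \sup_{0\neq v\in V(x)} \max\!\left\{ \frac{\norm{v}_{V(x)}}{\norm{v}},\ \frac{\norm{v}}{\norm{v}_{V(x)}} \right\}
\end{equation*}
is finite for $\mu$-a.e.\ $x$. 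Moreover $x\mapsto \lambda_{i}$, $x\mapsto H_{i}(x)$ and $x\mapsto \norm{-}_{x,i}$ are measurable (the integral above is a pointwise limit of measurable functions of the basepoint), so $x\mapsto C(x)$ is a measurable, $\mu$-a.e.\ finite function on $\Omega$.

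Finally, given $\delta>0$, choose $R=R(\delta)$ so large that $\mu(\{x : C(x)\le R\})>1-\delta/2$; by inner regularity of the Borel probability measure $\mu$ there is a compact $K\subset\{x : C(x)\le R\}$ with $\mu(K)>1-\delta$, and taking $C(\delta):=R(\delta)$ yields the conclusion. The main obstacle is the third paragraph: the fiberwise comparison is essentially immediate from convergence of the tempering integral, but assembling it into a comparison of the full Lyapunov norm requires the quantitative transversality of the Oseledets subspaces, and one must also verify that all the objects involved depend measurably on the basepoint; both are standard in Pesin theory but are the only points that need genuine care.
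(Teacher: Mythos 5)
Your proof is correct and follows the same route as the reference the paper defers to (the paper gives no proof of its own, citing Eskin--Lindenstrauss, Lemma~2.7, whose argument is exactly what you reproduce): construct the Lyapunov norm by the tempering integral, observe the distortion constant $C(x)$ is $\mu$-a.e.\ finite and measurable, and apply inner regularity to extract a compact set on which $C$ is bounded. One small note: the statement as written in the paper says ``for any $x\in\Omega$,'' which is clearly a slip for ``$x\in K$''; you interpret it correctly, and the transversality point you flag (the projections $v\mapsto v_i$ along the splitting having bounded operator norm on a good set) is indeed the only place needing care when passing from the fiberwise estimates on each $H_i(x)$ to the full norm.
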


The proof of this lemma follows by an application of Lusin's theorem.
Lemma~\ref{lem:norm-comp} allows one to directly compare estimates in the Lyapunov norm and the arbitrary norm (up to multiplicative constant) and will play a role in the proof of the bilipschitz estimates in \S~\ref{sec:bilip}.

\subsection{Normal forms coordinates}\label{subsec:normal-forms}
From now, we restrict ourselves to the study of the unstable lamination $W^{u}$.
Let $0<\lambda_1<\dots<\lambda_\ell$ be the distinct Lyapunov exponents of $g_{t}$, restricted to the expanding subspace $E^{u}$
and let $E^{u}(x)=E^{1}(x) \oplus \dots \oplus E^{\ell}(x)$ be the splitting of $E^{u}(x)$ into the (expanding) Lyapunov subspaces given by Oseledets' Theorem~\ref{thm:oseledets}.

\begin{defn}\label{defn:sub-resonant}
We say that a map between vector spaces is {\em polynomial}\, if each component  is given by a polynomial
in some, and hence every, basis.
We identify $\mathbb{R}^{n}=E^{1}\oplus \cdots \oplus E^{n}$, for some splitting of $\mathbb{R}^{n}$.
We consider an affine polynomial map $P: \mathbb{R}^{n} \to \mathbb{R}^{n}$,
split it into components $(P_1(t),\dots,P_{\ell}(t))$, where $P_i: \oplus_{j=1}^{n}E^{j} \to E^{i}$.
Each $P_i$ can be written uniquely as a linear combination of  polynomials
of specific homogeneous types: we say that $Q: \mathbb{R}^{n} \to E^i $ has homogeneous type $s= (s_1, \dots , s_\ell)$ if for any real numbers
$a_1, \dots , a_\ell$ and vectors
$t_j\in E^{j}$, $j=1,\dots, \ell,$ we have
\begin{equation}\label{eq:stype}
Q(a_1 t_1+ \dots + a_\ell t_\ell)= a_1^{s_1} \cdots  a_\ell^{s_\ell} \, Q( t_1+ \dots + t_\ell).
\end{equation}

We say that a polynomial map $P: \mathbb{R}^{n} \to \mathbb{R}^{n}$ is {\em sub-resonant}
if each component $P_i$ has only terms of homogeneous types $s= (s_1, \dots , s_\ell)$  satisfying {\em sub-resonance relations}
\begin{equation}\label{eq:sub-resonance}
\lambda_i\ge\sum s_j \lambda_j, \quad\text{where
$s_1,\dots,s_{\ell}\,$ are non-negative integers.}
\end{equation}
Clearly, for any sub-resonance relation we have $s_j=0$ for $j<i$ and $\sum s_j \leq \lambda_\ell/ \lambda_1$.
It follows that sub-resonant polynomial maps have degree at most
\begin{equation}\label{degree}
d=d(\chi)= \lfloor \chi_\ell/\chi_1 \rfloor.
\end{equation}
Sub-resonant polynomial maps $P: \mathbb{R}^{n} \to \mathbb{R}^{n}$ with $P(0)=0$ with invertible derivative at the origin form a group with respect to composition.
We will denote this  finite-dimensional Lie group by $G^{\chi}$.
When we identify a particular copy of $\mathbb{R}^{n}$ with $T_{x}W^{u}(x)$ for a Pesin regular point $x$, we will denote the associated copy of this Lie group by $G^{\chi}_{x}$.

Let $n=\dim W^{u}(x)$ so $T_{x}W^{u}(x)\simeq \mathbb{R}^{n}$.
Consider some $y\in W^{u}(x)$, we also have $W^{u}(x)=W^{u}(y)$, hence $T_{y}W^{u}(y)\simeq\mathbb{R}^{n}$.
Furthermore, if $x,y$ are both Oseledets regular, both of the tangent spaces $T_{x}W^{u}(x), T_{y}W^{u}(y)$ refine to an Oseledets splitting $\oplus_{i>0} E^{\lambda_{i}}(x), \oplus_{i>0} E^{\lambda_{i}}(y)$ with $E^{i}(x)\simeq~E^{i}(y)$ for all $i=1,\ldots, n$.
We denote the class of sub-resonant polynomial between $\oplus_{i>0} E^{\lambda_{i}}(x)$ and $\oplus_{i>0} E^{\lambda_{i}}(y)$ by $\mathsf{SR}_{x,y}$.

Consider 
\begin{equation}\label{eq:definition-of-unstable-param}
    \mathcal{H}_{x}=\left\{\varphi:\mathbb{R}^{n}\to W^{u}(x) \mid D_{0}\varphi \text{ is an isometry} \right\}.
\end{equation}
The group $G^{\chi}_{x}$ acts on $\mathcal{H}_{x}$ by pre-composition.
A fundamental theorem by B. Kalinin and V. Sadovskaya~\cite[Theorem~$2.5$]{kalinin} enables us to construct \emph{normal forms coordinates} over a subset of $M$ of full measure, which contains full unstable leaves.
We interpret the construction of~\cite{kalinin} in the following manner: 
consider the bundle of isometric parameterization of the unstables manifolds, modulo the sub-resonant group $G^{\chi}$, namely $\bigsqcup_{x\in M}\mathcal{H}_{x}/G^{\chi}_{x}$. \emph{Normal forms coordinates} amount choosing a \emph{measurable} section 
\begin{equation*}
    x\mapsto \phi_{x} \in \bigsqcup_{x\in M}\mathcal{H}_{x}/G^{\chi}_{x}
\end{equation*}
which is $g_{t}$-equivariant in the sense:

\begin{equation*}
    \phi_{g_{t}.x} \circ g_{t} \circ \phi_{x}^{-1} \in \mathsf{SR}_{x,g_{t}.x},
\end{equation*}
namely the $g_{t}$ dynamics over $W^{u}(x)$ is given by a sub-resonant polynomial. 
Furthermore, if a pair of points $x,x'$ are unstably related then $\phi_{x'}=~\mathsf{P}_{x,x'}\circ\phi_{x}$ for a sub-resonant polynomial $\mathsf{P}_{x,x'}\in~\mathsf{SR}_{x,x'}$. 
\end{defn}

\begin{ex}
In terms of coordinates, the normal forms coordinates will have the following structure
\begin{equation}\label{eq:G-subresonant-group}
    G^{\chi}_{x} = \left\{(y_{1},\ldots, y_{n}) \mapsto (y_{1}, y_{2}+\mathsf{P}_{2}(y_{1}), \ldots, y_{n}+\mathsf{P}_{n}(y_{1},\ldots, y_{n-1}) \right\},
\end{equation}
where $\mathsf{P}_{i}:E^{1}\oplus \cdots \oplus E^{i-1} \to E^{i}$ are polynomials, each of which is a sum of homogeneous polynomials $\mathsf{P}_{i,j}$ satisfying the sub-resonance condition~\eqref{eq:sub-resonance}.
For example if we consider only $E^{1}\oplus E^{2}$, the maps will be of the form 
\begin{equation}\label{eq:2-dim-sub-res}
(y_{1},y_{2}) \mapsto (y_{1}, y_{2}+\mathsf{P}_{2,x}(y_{1}))     
\end{equation}
where $\mathsf{P}_{2,x}:E^{1}(x)\to E^{2}(x)$ is a polynomial, vanishing at $0$, of degree less or equal to $\lambda_{2}/\lambda_{1}$.
In case of two points $x,x'$ such that $x'\in W^{u}(x)$ the resulting maps from $\mathsf{SR}_{x,x'}$ can be described in the following manner: 
Determine coordinates $(y_{1},y_{2})$ on $T_{x}W^{u}(x)$.
Then the matching sub-resonant coordinates on $T_{x'}W^{u}(x')$ are $(y_{1}',y_{2}')$ where
\begin{equation}\label{eq:explicit-sub-res}
    y_{1}' = a_{1}\cdot y_{1}+b_{1}, \hspace{1 cm}  y_{2}'=a_{2}\cdot y_{2}+\mathsf{P}_{2,x}(y_{1})+b_{2}
\end{equation}
for certain numbers $a_{1},a_{2},b_{1},b_{2}\in \mathbb{R}$ and $\mathsf{P}_{2}:E^{1}(x)\to E^{2}(x)$ a polynomial, vanishing at zero, of degree less or equal to $\lambda_{2}/\lambda_{1}$.
In the case demonstrated in~\eqref{eq:2-dim-sub-res}, we can describe the dynamics of the $g_{t}$-flow for the time-$1$ map and its iteration as
\begin{equation}\label{eq:g-action-normal-forms}
    \begin{split}
    g_{1}.(y_{1},y_{2}) &= \left(e^{\lambda_{1}}\cdot y_{1}, e^{\lambda_{2}}y_{2}+\mathsf{P}_{2,x}\left(y_{1}\right)\right), \\
    g_{2}.(y_{1},y_{2}) &= \left( e^{2\cdot\lambda_{1}}\cdot y_{1}, e^{\lambda_{2}}\cdot \left( e^{\lambda_{2}}y_{2}+\mathsf{P}_{2,x}\left(y_{1}\right)\right) + \mathsf{P}_{2,g_{1}.x}\left(e^{\lambda_{1}}\cdot y_{1}) \right) \right)\\
    \vdots
    \end{split}
\end{equation}
\end{ex}
The previous example illustrates the fact that the strictly sub-resonant polynomial group is \emph{nilpotent}.

We note the following easy observation.
\begin{obs}\label{obs:sub-resonance}
The polynomial $P_{1,x}:\mathbb{R}^{n}\to E^{1}$ is an  \emph{affine} map.
\end{obs}
\begin{proof}
We have that $\lambda_{i}>\lambda_{1}$ for each index $i>1$, hence we must have $s_{i}=0$ in view of the sub-resonance condition~\eqref{eq:sub-resonance}.
Furthermore, finer analysis of~\eqref{eq:sub-resonance}
 gives $s_{1}=0,1$ is the only possibility. \end{proof}

\section{Conditional measures}\label{sec:conditional-measures}
\texttt{Standing assumption} In order to simplify the notation, we will assume from now on that $M$ carries a measurable Lyapunov splitting $$TM=E^{s}\oplus E^{0}\oplus E^{\lambda_{1}}\oplus E^{\geq \lambda_{2}},$$ where we denote $W^{uu}_{\text{loc}}$ - the \emph{fast unstable manifold} with a tangent space equal to $E^{\geq \lambda_{2}}$ and $W^{u}$ - the \emph{unstable manifold} with a tangent space equal to $E^{1}\oplus E^{\geq \lambda_{2}}$. The proof of our theorem carries verbatim to the case of higher dimensional manifolds from it.

\subsection{Construction of a partition}
A non-empty closed subset $R\subset M$ is called a  \emph{rectangle} if
\begin{itemize}
    \item $diam(R)<\delta$ for $\delta$ small enough,
    \item $R=\overline{\text{int}R}$, where $\text{int}R$ is is the interior of $R$,
    \item the segment $[x,y]\subset R$ for every $x,y\in R$.
\end{itemize} 
A rectangle $R$ has a \emph{direct product structure} that is given $x\in R$ there exists a homeomorphism 
\begin{equation*}
    \theta:R\to R\cap W^{s}(x) \times R\cap W^{0}(x) \times R\cap W^{u}(x).
\end{equation*}
One may show that $\theta, \theta^{-1}$ are Holder continuous.
A finite cover \\ $\tilde{R}~=~\left\{R_1, R_2,\ldots, R_p \right\}$ of $M$ by rectangles $R_{i}$ is said to be a Markov partition if
\begin{itemize}
    \item $\text{int}R_{i}\cap \text{int}R_{j}=\emptyset$ for all $i\neq j$,
    \item for each $x\in \text{int}R_{i}\cap g_{-1}.(\text{int} R_{j})$ we have
    \begin{equation*}
        g_{1}.(W^{s}(x)\cap R_{i})\subset W^{s}(g_{1}.x)\cap R_{j}, \ g_{1}.(W^{u}(x)\cap R_{i}) \supset W^{u}(g_{1}.x)\cap R_{j}. 
    \end{equation*}
\end{itemize}

Such partitions was constructed first by Y. Sinai for the case of Anosov diffeomorphismes and later generalized~\cite{Sinai1,ratner-partitions,bowen-markov}.

As we are interested in a more general setting than just Anosov and we do not require the full force of Markov partitions, we will only be interested in partitions which are \emph{subordinate to $W^{u}$}.
\begin{defn}
    A measurable partition $\xi$ is \emph{subordinate to $W^{u}$} if for $\mu$ almost every $x\in M$:
    \begin{itemize}
        \item $\xi(x) \subset W^{u}(x)$,
        \item $\xi(x)$ contains a neighborhood of $x$ open in the submanifold topology of $W^{u}(x)$.
    \end{itemize}
\end{defn}
In this more general context, such partitions were constructed by Ledrappier-Strelcyn~\cite[Proposition~$3.1$]{Ledrappier_Strelcyn_1982} and Ledrappier-Young~\cite[\S $3$]{led-young}.
In particular, one can find a subordinate measurable partition which is \emph{generating and separating}. Moreover, one can construct such a partition which satisfy the forward Markov condition
\begin{equation*}
    W^{u}((g_{1}.x)\cap R_{j} \subset g_{1}\left(W^{u}(x)\cap R_{i}\right),
\end{equation*}
for every $x\in \text{int} R_{i}\cap g_{-1}(\text{int} R_{j})$.
We call such a partition a partition having the  \emph{one-sided Markov property}.
For more background on such partitions, see~\cite[Appendix~$B$]{Brown_Triestino_2019} and~\cite[Proposition $3.7$, Lemma $3.8$ and Appendix~$B$]{eskin-mirzakhani}.

We fix such a partition of $M$ and denote it by $\mathcal{B}_{0}$.
We denote by $\mathcal{B}_{0}(x)$ the atom of $\mathcal{B}_{0}$ containing $x$.

\begin{obs}
Suppose $x\in M$ and $y\in \mathcal{B}_{0}(x)\cap W^{u}_{\text{loc}}(x)$ then
$$ g_{-t}.y \in \mathcal{B}_{0}(g_{-t}.x). $$ 
\end{obs}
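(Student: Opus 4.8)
The plan is to split the claim into its two halves — membership in $W^{u}(g_{-t}.x)$ and membership in the atom $\mathcal{B}_{0}[g_{-t}.x]$ — and to treat them separately, since the first is soft while the second carries the real content.

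For the unstable part I would just invoke the flow–equivariance of the unstable foliation. From the definition of $W^{u}$ one has $g_{-t}.W^{u}(x)=W^{u}(g_{-t}.x)$ for every $t$, and since $y\in\mathcal{B}_{0}(x)\subset W^{u}(x)$ this gives $g_{-t}.y\in W^{u}(g_{-t}.x)$ immediately; no hyperbolicity estimates are needed beyond invariance of the foliation.

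For the atom part the only available tool is the defining property of the Markov partition, and the key step is to read that property \emph{backwards}. Whenever $x\in\mathrm{int}R_{i}\cap g_{-1}.(\mathrm{int}R_{j})$ we are given $g_{1}.(W^{u}(x)\cap R_{i})\supset W^{u}(g.x)\cap R_{j}$; applying $g_{-1}$ this rearranges to $g_{-1}.(W^{u}(g.x)\cap R_{j})\subset W^{u}(x)\cap R_{i}$, i.e. if a point $z$ lies on the unstable slice of the atom of some point $w$, then $g_{-1}.z$ lies on the unstable slice of the atom of $g_{-1}.w$. Taking $w=x$ (so that $R_{j}=\mathcal{B}_{0}[x]$ and $R_{i}=\mathcal{B}_{0}[g_{-1}.x]$) gives the statement for $t=1$, and then I would iterate, using $g_{-n}=(g_{-1})^{n}$, to get it for every positive integer $t=n$; the case $t=0$ is the hypothesis.

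The main obstacle is passing from integer times to arbitrary real $t$, since the Markov condition as stated only governs the time–one map. I would write $t=n+s$ with $n=\lfloor t\rfloor$ and $0\le s<1$ and reduce, via the integer case, to showing that flowing by the short time $-s$ does not push a point off the unstable slice of its base point's atom. This I expect to follow from the way $\mathcal{B}_{0}$ was produced — the rectangles are flow boxes built over a Poincaré section, so flowing by $|s|<1$ moves a point only in the neutral/flow direction inside $R$ and does not cross an unstable face, and the rectangles being closed handles the borderline values of $s$. Alternatively, and more cheaply, one may restrict to the set of orbits that never meet $\bigcup_{i}\partial R_{i}$ — a set one checks to have full $\mu$–measure, as $\mu$ gives no mass to these boundaries — on which the statement for real $t$ follows from the integer case by continuity of the flow. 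Combining this with the unstable–foliation invariance yields $g_{-t}.y\in\mathcal{B}_{0}[g_{-t}.x]\cap W^{u}(g_{-t}.x)$.
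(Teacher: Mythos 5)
Your argument is correct and follows the same line as the paper's one-sentence justification, namely reading the second Markov property at the point $g_{-1}.x$ (so that $R_i=\mathcal{B}_0[g_{-1}.x]$, $R_j=\mathcal{B}_0[x]$) and iterating; the unstable half is, as you say, just flow-equivariance of $W^{u}$. Your concern about non-integer $t$ is a genuine (if minor) gap that the paper glosses over — the Markov property is stated only for $g_1$, yet the subsequent lemma on nesting of the $\mathcal{B}_t$'s uses the observation with real $t'-t$ — and either of your two fixes is appropriate: in the Bowen--Ratner constructions the rectangles are flow boxes over a Poincaré section, so flowing by a time $s\in[0,1)$ moves a point only in the neutral direction and does not cross an unstable face, while restricting to the full-$\mu$-measure set of orbits avoiding $\bigcup_i\partial R_i$ works just as well for the measure-theoretic purposes of this paper.
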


This property follows from the construction at~\cite[Proposition~$3.1.1$]{Ledrappier_Strelcyn_1982}
For $t>0$ we denote 
\begin{equation*}
\mathcal{B}_{t}(x) = g_{-t}.(\mathcal{B}_{0}(g_{t}.x)\cap W^{u}(g_{t}.x)).
\end{equation*}

\begin{lem}
The following properties of $\mathcal{B}_{t}$ hold:
\begin{enumerate}
    \item For $t'>t\geq 0$ we have $\mathcal{B}_{t'} \subset \mathcal{B}_{t}$.
    \item Suppose that $t\geq 0$,$t'\geq 0$ and $x,x'\in M$ such that $\mathcal{B}_{t}(x)\cap~\mathcal{B}_{t'}(x')\neq~\emptyset $. Then either $\mathcal{B}_{t'}(x')\subset \mathcal{B}_{t}(x)$ or $\mathcal{B}_{t}(x)\subset \mathcal{B}_{t'}(x')$.
\end{enumerate}
\end{lem}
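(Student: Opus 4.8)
The plan is to deduce both assertions from the Markov property of $\mathcal{B}_0$, after first recording that each family $\{\mathcal{B}_t[x]\}$ genuinely partitions the unstable leaves. The elementary facts I would isolate, all immediate from the definition $\mathcal{B}_t[x]=g_{-t}.(\mathcal{B}_0[g_t.x]\cap W^u(g_t.x))$ together with the equivariance $g_{-t}.W^u(g_t.x)=W^u(x)$, are: (i) $\mathcal{B}_t[x]\subset W^u(x)$; and (ii) if $y\in\mathcal{B}_t[x]$ then $W^u(y)=W^u(x)$ and $g_t.y\in\mathcal{B}_0[g_t.x]\cap W^u(g_t.x)$, so that — because $\mathcal{B}_0$ is a partition and $W^u(g_t.y)=W^u(g_t.x)$ — one gets $\mathcal{B}_t[y]=\mathcal{B}_t[x]$. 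Hence $\{\mathcal{B}_t[x]\}_{x\in W^u(p)}$ is an honest partition of (a full–measure subset of) each leaf $W^u(p)$, and statement (1) is equivalent to the inclusion $\mathcal{B}_{t'}[x]\subset\mathcal{B}_t[x]$ for every $x$, i.e.\ to $\mathcal{B}_{t'}$ being finer than $\mathcal{B}_t$.

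To prove that inclusion, set $s=t'-t>0$ and apply the Markov property of $\mathcal{B}_0$ to the point $z=g_t.x$ and time $s$ (iterating the time-one Markov relation, or using its flow version): with $R_i=\mathcal{B}_0[g_t.x]$ and $R_j=\mathcal{B}_0[g_{t'}.x]$, the relation $g_s.\bigl(W^u(z)\cap R_i\bigr)\supset W^u(g_s.z)\cap R_j$ gives, after applying $g_{-s}$ and then $g_{-t}$,
$$\mathcal{B}_{t'}[x]=g_{-t'}.\bigl(\mathcal{B}_0[g_{t'}.x]\cap W^u(g_{t'}.x)\bigr)\ \subset\ g_{-t}.\bigl(\mathcal{B}_0[g_t.x]\cap W^u(g_t.x)\bigr)=\mathcal{B}_t[x],$$
which is (1).

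For (2), assume without loss of generality $t\le t'$ and pick $z\in\mathcal{B}_t[x]\cap\mathcal{B}_{t'}[x']$. By (i) the points $x,x',z$ all lie on a single unstable leaf, $W^u(x)=W^u(z)=W^u(x')$. Since $g_t.z\in\mathcal{B}_0[g_t.x]$ and $g_{t'}.z\in\mathcal{B}_0[g_{t'}.x']$, fact (ii) yields $\mathcal{B}_t[z]=\mathcal{B}_t[x]$ and $\mathcal{B}_{t'}[z]=\mathcal{B}_{t'}[x']$. Applying part (1) to the base point $z$, since $t'\ge t$ we get $\mathcal{B}_{t'}[z]\subset\mathcal{B}_t[z]$, hence $\mathcal{B}_{t'}[x']\subset\mathcal{B}_t[x]$ (and equality when $t=t'$), which is exactly (2).

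The one step that genuinely requires care is the use of the Markov property at a non-integer time $s$: it is stated for $g_1$, so one must either iterate over integer steps and interpolate using that $g_\tau$ still expands $W^u$ for $0<\tau<1$, or invoke the flow version of the Markov relation for $\mathcal{B}_0$. One must also be mildly careful about the boundaries of the rectangles, working throughout with the $\mu$-a.e.\ defined partition generated by $\mathcal{B}_0$ so that the atoms $\mathcal{B}_0[x]$, and hence the $\mathcal{B}_t[x]$, are unambiguous; everything else is bookkeeping with the identities $g_{-t}.W^u(g_t.x)=W^u(x)$ and the partition property of $\mathcal{B}_0$.
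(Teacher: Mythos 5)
Your proposal is correct and follows essentially the same route as the paper: part (1) is the Markov property of $\mathcal{B}_0$ (which the paper packages in an observation immediately above the lemma, noting that $y\in\mathcal{B}_0[x]\cap\mathcal{B}_0(x)$ pulls back under $g_{-t}$), and part (2) reduces to the partition property of $\mathcal{B}_0$ combined with part (1). The only cosmetic difference is in part (2): the paper first invokes refinement to pass from $\mathcal{B}_t[x]\cap\mathcal{B}_{t'}[x']\neq\emptyset$ to $\mathcal{B}_t[x]\cap\mathcal{B}_t[x']\neq\emptyset$, shows $\mathcal{B}_t[x]=\mathcal{B}_t[x']$ via the partition property, and then invokes refinement a second time; you instead record once and for all that $y\in\mathcal{B}_t[x]\Rightarrow\mathcal{B}_t[y]=\mathcal{B}_t[x]$ and apply it at both times $t$ and $t'$ to the common point $z$, then invoke refinement once. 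Your explicit caveat about using the Markov relation at non-integer times (iterating $g_1$ and interpolating, or invoking the flow version) is a real point worth keeping; the paper uses the flow version implicitly in the preceding observation.
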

\begin{proof}
The first part follows immediately from the observation above.
For the second part, we assume that $t'\geq t$, as the partitions are refined as $t$ increases, if $\mathcal{B}_{t}(x)\cap~\mathcal{B}_{t'}(x')\neq~\emptyset$ we must also have $\mathcal{B}_{t}(x)\cap~\mathcal{B}_{t}[x']\neq~\emptyset$.
Now assume $y\in \mathcal{B}_{t}(x)\cap\mathcal{B}_{t}(x') $. Then we have that $g_{t}.y\in \mathcal{B}_{0}(g_{t}.x)$ and also $g_{t}.y\in \mathcal{B}_{0}(g_{t}.x')$. As $\mathcal{B}_{0}$ is a partition, we must have that $\mathcal{B}_{0}(g_{t}.x)=\mathcal{B}_{0}(g_{t}.x')$, which means in turn that $\mathcal{B}_{t}(x)=\mathcal{B}_{t}(x')$ per the definition of $\mathcal{B}_{t}$. Again by the refinement property we get that
\begin{equation*}
    \mathcal{B}_{t'}(x')\subset\mathcal{B}_{t}(x')=\mathcal{B}_{t}(x).
\end{equation*}
\end{proof}

\subsection{Construction of conditional measures over the stable and unstable leaves}\label{subsec:conditionals-construction}

We refer the reader to the survey in~\cite[\S5]{einsiedler-lindenstrauss} and the book~\cite[\S5]{einsiedlerbook} for more background about conditional measures.

By its construction, the sets $\mathcal{B}_{0}(x)$ are the atoms of a \emph{measurable partition} of $M$ subordinate to $W^{u}$.
We let $\left\{\mu_{x}^{u}\right\}_{x\in M}=~\left\{\mu\mid_{W^{u}(x)}\right\}_{x\in M}$ denote the set of conditional measures of $\mu$ along this partition (c.f \cite[$\S1.4, \S3.2$]{clim-katok}).

This gives a rise to a \emph{measurable function} $\mathbf{f}_{1}$ defined as in~\cite[\S1.5]{clim-katok}\cite[Theorem~$5.9$]{einsiedler-lindenstrauss}.

Namely, for each atom $\mathcal{C} \in \mathcal{B}$ we have
\begin{equation*}
    \mathbb{E}(\phi \mid \mathcal{C}) = \int \phi d\mathbf{f}_{1}[x],
\end{equation*}
for all measurable functions $\phi:X\to\mathbb{R}$.

\begin{defn}\label{def:wasserstein}
We define the (normalized) \emph{Wasserstein metric} $d_{W}$ between two conditional measures (of bounded support) as
\begin{equation*}
    d_{W}(\mu_{1},\mu_{2})=\sup_{h:M\to\R \text{ is Lipschitz with }Lip(h)\leq 1 }\left\{\left\lvert\int_{M}h(x)\left(\frac{d\mu_{1}(x)}{\mu_{1}(M)}-\frac{d\mu_{2}(x)}{\mu_{2}(M)}\right)\right\rvert\right\}.
\end{equation*}
\end{defn}

While this metric is weaker than the Radon metric, it does induce the topology of weak-$\star$ convergence over the space of measures, up to normalization.

The function $\mathbf{f}_{1}$ should be thought as a measurable function from $M$ to the set of
probability measures defined over $M$, endowed with a suitable topology coming from the Wasserstein metric as defined in Definition~\ref{def:wasserstein}  (c.f. \cite[Proposition~$5.17$]{einsiedler-lindenstrauss}).

\begin{rem}
While the conditional measures are normalized over each atom by their construction, this normalization is not the correct normalization for us as we will study push-forwards of these measures which may a-priori change its normalization. This modified distance function makes this renormalization transparent while keeping the main feature - namely absolute continuity, intact.
A standard argument based on Poincare's recurrence will be used in order to make sure the renormalization is indeed by the correct factors.
\end{rem}

We will interpret the function $\mathbf{f}_{1}$ in the following way -  consider a normal forms structure over the lamination induced by the unstable manifolds $\left\{W^{u}(x)\right\}_{x\in M}$.
Hence for almost every $x$, we have a map $\phi_{x}:\mathbb{R}^{n}\to W^{u}(x)$, arising from the choice of normal forms coordinates.
Consider its inverse map $\psi_{x}:W^{u}(x)\to\mathbb{R}^{n}$.
Restrict $\psi_{x}$ to $\mathcal{B}_{0}(x)$.
We may push-forward the measure $\mu_{x}^{u}$ by the map $\psi_{x}$ to a measure over $\mathbb{R}^{n}$, $\psi_{x}.\mu_{x}^{u}$.
Hence we may identify $\mathbf{f}_{1}(x)$ with $\psi_{x}.\mu_{x}^{u}$.

Given a subresonant polynomial $S:\R^n\to\R^n$, we may get subresonant map $\tilde{S}$ over the leaf $W^{u}(x)$ by conjugation with $\psi_{x}$.
\begin{defn}
We will say that $\mathbf{f}_{1}$ is \emph{S-invariant} if for $\mu$ almost every $x$ we have
\begin{equation*}
    d_{W}\left(\tilde{S}.\mathbf{f}_{1},\mathbf{f}_1 \right)=0.
\end{equation*}
We note here that the measure $\mathbf{f}_1$ is only defined in each atom, in view of the definition of the distance $d_{W}$, the measures only match over the intersection of a set with its $\tilde{S}$-translation.
\end{defn}
Using this definition, we may define the notions of a generalized $u$-Gibbs state and an SRB measure.
\begin{defn}
We say that $\mu$ is an SRB measure if for $\mu$ almost every $x$, for every subresonant polynomial $S\in G^{\chi}$ and its associated subresonant map $\tilde{S}$ we have
\begin{equation*}
    d_{W}\left(\tilde{S}.\mathbf{f}_1(x),\mathbf{f}_1(x)\right)=0.
\end{equation*}
\end{defn}

\begin{lem}
The above definition of SRB measure is equivalent to the definition given in the introduction, namely that $f\mid W^{u}$ is absolutely-continuous.
\end{lem}

The proof follows from Ledrappier's result in~\cite{Ledrappier-Sinai}.
A more detailed account can be found in~\cite[\S9, Appendix D]{Brown_Malicet_Obata_Santiago_Triestino_Alvarez_Roldan_2019}.
By the construction of normal forms by Kalinin and Sadovskaya~\cite{kalinin-sadovskaya-singe-lyapunov} one may endow $M$ with the (measurable) action of the subresonant group, which is $g_{t}$-equivarient by construction.
\begin{proof}[Proof Sketch]
Fix some $x\in M$ a $\mu$-generic point.
By Ledrappier~\cite[Proposition~$3.7$]{Ledrappier-Sinai}, as $\mu$ is assumed to be $SRB$ measure, we can write $\frac{d\mu}{dLeb} = \Delta$ for a Radon-Nykodim derivative $\Delta$, where $Leb$ stands for the Lebesgue measure over $W^{u}(x)$.
Moreover, the following formula for $\Delta$ is given in~\cite[Proposition~$3.1$]{Ledrappier-Sinai}
\begin{equation*}
    \Delta(y,z)=\prod_{i=1}^{\infty}\frac{\text{Jac}^{u}(g_{-i}.y)}{\text{Jac}^{u}(g_{-i}.z)},
\end{equation*}
for $y,z\in W^{u}(x)$.
We endow $W^{u}(x)$ with the normal forms coordinates.
Hence we may find two vectors $v_{y}, v_{z}\in\mathbb{R}^{\dim E^{u}}$ such that in that coordinate system, $y,z$ are identified with $v_{y}, v_{z}$ respectively.
Using the equivarient property of the normal forms coordinates we get
\begin{equation*}
    g_{-t}.y = (dg_{-t}.x)\cdot \left( v_{y}+\mathsf{P}_{x,y}(v_{y}) \right), 
    g_{-t}.z = (dg_{-t}.x)\cdot \left( v_{z}+\mathsf{P}_{x,z}(v_{z}) \right),
\end{equation*}
for some subresonant polynomials $\mathsf{P}_{x,y},\mathsf{P}_{x,z}$.
Writing the above in a matrix form according to $\{v_{1},\ldots, v_{\dim E^{u}}\}$ we get
\begin{equation*}
    g_{-t}.\star = \left(dg_{-t}.x\right) \cdot \left(v_{\star} + \mathsf{N}^{-t}_{x,y}v_{\star}\right),
\end{equation*}
where $\star$ is either $y,z$ and $\mathsf{N}^{-t}$ is some \emph{nilpotent} upper triangular matrix in view of the subresonant condition (c.f.~\eqref{eq:g-action-normal-forms}).
Taking derivatives (in the $v$ coordinates) of the above equation we get that the derivative is $(dg_{-t}.x)\left(I+\tilde{\mathsf{N}}^{-t}_{x,\star}(v_{\star})\right)$, for some other nilpotent map $\tilde{\mathsf{N}}^{-t}_{x,\star}$.
As $\tilde{\mathsf{N}}^{-t}_{x,\star}(v_{\star})$ is nilpotent, its only eigenvalue is $0$, hence the only eigenvalue of $I+N^{-t}_{x,\star}(v_{\star})$ is equal to $1$.
It readily follows that the Jacobian is constant along $W^{u}(x)$.
\end{proof}

Fix some $x\in M$. Consider the group of subresonant polynomials over $\R^n$, $G^{\chi}_{x}$.
This group contains a proper subgroup consists of subresonant polynomials where the action is the identity over the least component. We denote this subgroup by $G^{\chi,>1}$.

\begin{defn}
We say that $\mu$ is a generalized $u$-Gibbs state if for almost every $x\in M$, for every $S\in G^{\chi,>1}$, its associated map $\tilde{S}$ satisfy
\begin{equation*}
    d_{W}\left(\tilde{S}.\mathbf{f}_1(x),\mathbf{f}_1(x)\right)=0.
\end{equation*}
\end{defn}

Under this interpretation, the claim that $\mathbf{f}_{1}$ is absolutely continuous along $W^{>1}_{\text{loc}}$ is equivalent to saying that $f_{1}$ is invariant under translations along the subspaces $\oplus_{i>1}E^{i}(x)$ as identified in $\mathbb{R}^{n}$ by the map $\psi_{x}$.

This can be seen easily as the proportion constant between $\tilde{S}.\mathbf{f}_1$ and $\mathbf{f}_1$, for a fixed $\tilde{S}$, gives rise to a multiplicative cocycle into $\R$.
Using Poincare recurrence, and the fact that the unstable lamination is uniformly contracted by $g_{-t}$, we get that this cocycle is trivial. 

\begin{lem}
This interpretation of $\mathbf{f}_{1}$ does not depend on the choice of normal forms coordinates used, in the case $\mu$ is a generalized $u$-Gibbs state with respect of $E^{>1}$.
\end{lem}
\begin{proof}
By definition of $\mathcal{H}_{x}$ as in~\eqref{eq:definition-of-unstable-param} and the interpretation of normal forms coordinates, a different coordinate structure amounts to considering a post-composition of $\psi_{x}$ with a map $\eta_{x}\in G_{x}^{\chi}$, which induces an isometry over $\mathbb{R}^{n}$.
Hence the Jacobian of $\eta_{x}$ at the identity equals to $1$.
Furthermore, considering the finer structure of $\eta_{x}$ arising from the sub-resonance relations~\eqref{eq:sub-resonance} (see also~\eqref{eq:G-subresonant-group}), we see that it fixes the subspace corresponding to $E^{1}$.
Over $E^{>1}$, we have that $\psi_{x}.\mu_{x}^{u}$ is Lebesgue, hence the related Radon-Nykodim derivative equal to $1$ there as well. 
\end{proof}
\begin{obs}
As $E^{u}(x)/E^{uu}(x)$ is a one-dimensional $g_{t}$-equivarient bundle, the $g_{t}$ action induces a linear cocycle over it upon a choice of a trivialization. Concretely the cocycle which we will denote by $\mathcal{R}$ is defined by considering a unit vector $v\in E^{u}(x)/E^{uu}(x)$ and defining
\begin{equation}\label{eq:R-defn}
\mathcal{R}(g_{t}.x) = \lVert g_{t}.v  \rVert,
\end{equation}
where the norm is the Pesin norm related to the Osceldets splitting.
In view of the normal forms coordinates as in \S\ref{subsec:normal-forms}, this cocycle $\mathcal{R}$ measures exactly the expansion along the second coordinate in \eqref{eq:explicit-sub-res},\eqref{eq:g-action-normal-forms}.
With this interpretation, we may relate $\mathbf{f}_{1}$ to $\mathcal{R}$ by:

\begin{equation*}
    \mathbf{f}_{1}(g_{t}.x) = \mathcal{R}(g_{t}.x) \cdot (g_{t}.\mathbf{f}_{1}(x)).
\end{equation*}
\end{obs}

Fix a generic Pesin point $x\in M$. Consider some other generic Pesin point $x'\in \mathcal{B}_{0}(x)$.
In what proceeds we will be working in $\psi_{x}(\mathcal{B}_{0}(x))\subset\mathbb{R}^{n}$.
We are interested in measuring the Hausdorff distance between two copies of the subspace $E^{uu}$, one cased at the origin (amounting to $W^{uu}_{\text{loc}}(x)$) and one copy translated by a vector $v_{x'}$, amounting to $W^{uu}_{\text{loc}}(x')$.
The vector $v_{x'}$ can be calculated by $\mathsf{P}_{x,x'}(0,0)$ for some sub-resonant polynomial $\mathsf{P}_{x,x'}$.
As we are interested in the Hausdorff distance between the two subspaces, it is enough to consider the projection of $v_{x'}$ to the $E^{\lambda_1}$-subspace.
Applying the $g_{t}$ map as in~\eqref{eq:g-action-normal-forms}, we see that $\mathcal{R}$ exactly measures this distance.

\subsection{A result about avoiding subspaces}\label{subsec:avoiding}
The following Lemma follow essentially verbatim from~\cite[\S5]{eskin-mirzakhani} (see also \cite[\S8]{eskin_lindenstrauss-long}).
As the proof is technical and relays on several auxiliary lemmata, we give them in appendix~\ref{app:avoiding-tech}.

\texttt{Standing assumptions:}In what follows, let $V$ be a measurable vector bundle over $M$, $F:~M\to~V$ a measurable section.
In particular, for almost every $q\in M$ it defines a map $F_{q}:W^{s}\to V(q)$.

The following avoidance lemma, which is valid in large generality, will be used during the proof of the Eskin-Mirzakhani scheme in order to show that one may find many points $q'$ on $W^{s}(q)$ for which an associated vector $F_{q}(q')$ in a specific line $F_{q}(W^{s}(q))$ in some vector bundle $V(q)$ avoids some ``problematic subspace'' denotes $\mathfrak{M}_{u}$.  
We assume that $V(q)$ contains $W^{s}(q)$ via an injective and isometric embedding, given by $F_{q}(W^{s}(q))$, which we will identify with the identity map.
\begin{lem}\label{lem:subspace-avoidance}
For any $\delta>0$ there exists constants $c_{1}(\delta),\epsilon_{1}(\delta)>0$ which tend to $0$ as $\delta$ tends to $0$ and $\rho(\delta),\rho'(\delta)>0$ such that for every subset $M'\subset M$ of measure larger than $1-\delta$ there exists a subset $M''\subset M$ of measure larger than $1-c_{1}(\delta)$ such that the following holds, for any $q\in M$ there exists a measurable map to proper subspaces of $V(q)$, denoted $u\mapsto \mathfrak{M}_{u}$.

Then for any $q\in M''$ there exists $s.q=q'\in W^{s}_{\text{loc}}(q)\cap M'$ such that 
\begin{equation*}
    \rho'(\delta)\leq dist(q,q') \ll 1,
\end{equation*}
and
\begin{equation}
    d(F_{q}(q'), \mathfrak{M}_{u}) > \rho(\delta).
\end{equation}
\end{lem}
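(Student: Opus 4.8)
The plan is to reduce the assertion to a combination of the elementary avoidance Lemma \ref{lem:badsubspace} with a Fubini-type argument controlling how often a stable-leaf orbit meets the good set $M'$. First I would fix $q\in M$ and examine the line $F_q(W^s_1(q))\subset V(q)$, which by hypothesis is an isometric copy of the unit-size stable leaf. For each $u$, the subspace $\mathfrak{M}_u\subset V(q)$ is proper, so the set of unit vectors in $F_q(W^s_1(q))$ that come $\rho$-close to $\mathfrak{M}_u$ is contained in a small neighborhood of the intersection of a line with a proper subspace; quantitatively, the measure (length) of $\{q'\in W^s_1(q): d(F_q(q'),\mathfrak{M}_u)\le\rho\}$ is $O(\rho/\sin\angle)$, where $\angle$ is the angle between the line $F_q(W^s(q))$ and $\mathfrak{M}_u$. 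The first key step is therefore to choose, via Lemma \ref{lem:norm-comp} and the $\theta$-transversality available on $M_{Os,\theta}$, a set $M''$ of measure $\ge 1-c_1(\delta)$ on which this angle is bounded below uniformly — here one pays a controlled amount of measure to pass to points where the Lyapunov/normal-form data is continuous and the relevant subspaces are uniformly transverse, and $c_1(\delta)\to 0$ as $\delta\to 0$.

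The second key step is to guarantee that the competing constraint $q'\in M'$ can be met simultaneously. Since $\mu(M')\ge 1-\delta$, the conditional measures $\mu^s_q$ satisfy $\mu^s_q(W^s_1(q)\setminus M')\le\epsilon_1(\delta)$ for all $q$ outside a set of measure $\le c_1(\delta)$, by the standard maximal-inequality/Fubini argument for conditional measures (as in the Oseledets-regular-point discussion preceding the lemma, using Birkhoff for return times). Enlarging $M''$ to also exclude this bad set, for $q\in M''$ the set $W^s_1(q)\cap M'$ has $\mu^s_q$-measure $\ge 1-\epsilon_1(\delta)$, hence — after absorbing the absolute continuity of $\mu^s_q$ with respect to Lebesgue on the leaf, or simply arguing with the support — it is nonempty and in fact has length bounded below away from the basepoint. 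Intersecting with the complement of the $\rho$-neighborhood of $\mathfrak{M}_u$ from the first step, whose length is $\le C\rho$, and choosing $\rho=\rho(\delta)$ small enough that $C\rho$ is less than half the lower bound on the length of $W^s_1(q)\cap M'$, leaves a set of positive length; any point $q'$ in it satisfies both $d(F_q(q'),\mathfrak{M}_u)>\rho(\delta)$ and $q'\in M'$. The lower bound $\rho'(\delta)\le\dist(q,q')$ is then obtained by additionally deleting a $\rho'(\delta)$-ball around $q$ and shrinking $\rho'$ so that this deletion does not exhaust the remaining positive-length set.

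The main obstacle I anticipate is the uniformity of the transversality angle between the embedded line $F_q(W^s(q))$ and the measurably-varying family $\mathfrak{M}_u$: a priori both the embedding $F_q$ and the subspaces depend on $q$ only measurably, so one cannot expect a single angle bound on all of $M$. The resolution is exactly the Lusin/Pesin-type passage used repeatedly in the paper — restrict to a compact set where the section $q\mapsto(F_q, \text{Lyapunov flag})$ is continuous and the Lyapunov subspaces are $\theta$-transverse (the set $M_{Os,\theta}$), and note that $\mathfrak{M}_u$, being built from these data, inherits a uniform transversality to the stable line there. One must be slightly careful that the dependence on the auxiliary parameter $u$ is handled measurably rather than requiring uniformity in $u$ as well, but since the final estimate is applied pointwise in $u$ this causes no trouble. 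A secondary technical point is ensuring all of $c_1,\epsilon_1,\rho,\rho'$ can be taken to tend to $0$ with $\delta$ in a compatible way; this is a matter of choosing them in the order $c_1\rightsquigarrow\epsilon_1\rightsquigarrow\rho\rightsquigarrow\rho'$, each depending on the previous, which the statement permits.
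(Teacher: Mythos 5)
There is a genuine gap at the heart of your plan: you measure the bad set $\{q'\in W^s_1(q): d(F_q(q'),\mathfrak{M}_u)\le\rho\}$ in \emph{length} (Lebesgue measure on the stable leaf) and then argue that $W^s_1(q)\cap M'$ has positive length and so must meet the complement. But the lemma lives on the measure $\mu^s_q$, which is the conditional of $\mu$ along stable leaves, and nothing in the hypotheses of the paper makes $\mu^s_q$ absolutely continuous with respect to Lebesgue — the generalized $u$-Gibbs assumption only controls the conditional along $W^{<m}$, i.e.\ an unstable direction. A priori $\mu^s_q$ could be a singular measure concentrated entirely inside a single affine hyperplane of $V(q)$, in which case no length estimate on the bad set is of any help: the bad set could carry full $\mu^s_q$-mass while having vanishing length. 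The paper's proof is specifically designed around this difficulty: Lemma~\ref{lem:5-3} shows that $(F_x)_*\mu^s_x$ cannot be supported on finitely many proper affine subspaces near the origin (an ergodicity argument using the expansion of $g_{-t}$ along $W^s$), and Lemmas~\ref{lem:5-4}--\ref{lem:5-5} convert this into a quantitative statement that a positive $\mu^s_x$-fraction of the atom avoids $\rho_1$-neighborhoods of any $N$ prescribed subspaces. Your ``or simply arguing with the support'' fallback does not rescue the argument, because the support of $\mu^s_q$ and the bad set may coincide.

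A second, related issue is that you dismiss the $u$-dependence too quickly. The map $u\mapsto\mathfrak{M}_u$ produces a \emph{family} of subspaces, and what is actually needed downstream (cf.\ Proposition~\ref{prop:A-avoidance} and the choices in \S\ref{sec:8-pts}) is a single $q'$ that simultaneously avoids $\mathfrak{M}_u$ for a $(1-\epsilon_1)$-fraction of the $u$'s. It is not enough to handle one $u$ at a time, since the union over $u$ of the bad sets could exhaust $W^s_1(q)$. The paper handles this with the pigeonhole/determinant argument inside the proof of Lemma~\ref{lem:5-5}: if the set $J_{\mathrm{bad}}(x)$ were not covered by $\rho_1$-neighborhoods of $N(\epsilon_1)$ subspaces, one could inductively pick $v_1,\dots,v_{k_{\max}}$ whose associated bad-parameter sets $U_i\subset\mathcal{B}$ would have to be pairwise disjoint, contradicting a counting bound. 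Your proposal never confronts this combinatorial step. These two omissions are what Lemmas~\ref{lem:5-3}--\ref{lem:5-5} exist to supply, and without them the argument does not close.
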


\section{Factorization}\label{sec:factorization}
During this Section we will refer quite extensively the diagram in Figure~\ref{fig:8-pts}, pioneered by Eskin-Mirzakhani.
We will actually only handle the ``tilted'' half in this part, which we draw in the figure below:
\begin{figure}[H]
    \centering
    \scalebox{1}{
\begin{tikzpicture}

\coordinate (q) at (-2,0); 
\coordinate (q_1) at (-0.9,3);
\coordinate (q') at (2,0);
\coordinate (q'_1) at (0.9,3);
\coordinate (uq_1) at (-3.5,4);
\coordinate (q_2) at (-4,9);
\coordinate (q'_2) at (-2.5,9.5);
\coordinate (uq'_1) at (-4.5,5);
\coordinate (z) at (-2.7,4.5);


\draw plot [smooth] coordinates {(q) (q_1)};
\draw plot [smooth] coordinates {(q') (q'_1)};
\draw plot [smooth] coordinates{(uq_1) (q_2)};
\draw (uq_1) -- (q_2);

\draw[very thick] (q_1) .. controls +(up:0.2cm) and +(left:0.2cm) .. (uq_1) node[pos=0.5, below, sloped]{\smaller $W^{uu}(q_{1})$};
\draw[very thick] (q'_1) .. controls +(up:0.5cm) and +(right:0.3cm)  .. (uq'_1) node[pos=0.5, above, sloped]{\smaller $W^{uu}(q'_{1})$};


\draw[dashed] (q) -- (q') node[pos=0.5, below]{\tiny $O(1)$};
\draw[dashed] (q_1) -- (q'_1) node[pos=0.5, below=0.1 cm]{\tiny $O\left(e^{-\lambda_{C}\cdot\ell}\right)$};
\draw[dashed] (uq_1) -- (z);
\draw[dashed] (q_2) -- (q'_2) node[pos=0.5,above, sloped]{\tiny $O(\varepsilon)$};
\draw[dashed] (q'_2) -- (z);

\filldraw (q) circle (2pt) node[below]{$q$};
\filldraw (q') circle (2pt) node[below]{$q'$};
\filldraw (q_1) circle (2pt) node[below left]{$q_1$};
\filldraw (q'_1) circle (2pt) node[below right]{$q'_1$};
\filldraw (uq_1) circle (2pt) node[below left]{$u.q_1$};
\filldraw (q_2) circle (2pt) node[above]{$q_2$};
\filldraw[opacity=0.2] (q'_2) circle (2pt) node[above, opacity=.2,text opacity=0.5]{$q'_2$};
\end{tikzpicture}
    \caption{Illustration of the points in \S\ref{sec:factorization}}
    \label{fig:8-left-pts}
\end{figure}
We will try to familiarize the reader with the basics of the factorization technique in order to explain the motivation to the technical construction below.
The factorization technique of Eskin-Mirzakhani tries to study the drift developing by moving two stabely-related points along their fast unstables and applying the central-stable holonomy.
This drift, in general, will grow at exponential rate.
Then one flows the points forward until this drift grows to size $O(\delta)$, where $\delta$ is some predetermined macroscopic size. 

Referring to Figure~\ref{fig:8-left-pts} we have:
\begin{itemize}
    \item $q,q'$ are assumed to be \emph{stably} related, with distance $O(1)$ from each other.
    \item $q_{1},q'_{1}$ are defined by $g_{\ell}.q,g_{\ell}.q'$ respectively.
    \item $u.q_{1}\in W^{uu}(q_{1})$ is a point of distance $O(1)$ on the fast unstable leaf originating at $q_{1}$.
    \item $q_{2}=g_{t}.u.q_{1}$.
\end{itemize}

A crucial step is choosing the points in a way that all the points chosen are ``good points'' in terms of their dynamical features.
This is done by the introduction of an operator $A(q,u,\ell,t)$ measuring the divergence happening in Figure~\ref{fig:8-pts}.

We will use the following definition of local Hausdorff distance between two immersed submanifolds in our manifold.
\begin{defn}\label{defn:local-haus-distance}
Let $X,Y\subset M$ be two compact subsets.
We define the local Hausdorff distance at a point $p\in M$ between $X$ and $Y$, $hd_{p}(X,Y)$ as the Hausdorff distance between $B^{M}_{p}(\omega)\cap X$ and $B_{p}^{M}(\omega)\cap Y$, where $\omega$ is some small fixed constant which is smaller than the radius of injectivity of $M$.
\end{defn}

The goal of this section is to prove the following two technical factorization theorems:
\begin{thm}[Factorization at $t=0$]
\label{thm:factorization-t-0}
For any $\Lambda>0$ given, there exists a finite dimensional $g_{t}$-equivariant vector bundle $V$ over $M$ and a measurable equivariant section $F_{q}:W^{s}(q)\to V(q)$ and a family of linear maps $$\mathbf{A}(q_{1},u,\ell):V(q) \to \Rc(u.q_{1/2})\simeq~\mathbb{R}$$ and for every compact subset $K\subset M$, for every $\delta>0$ there exists a subset $K_{\delta}\subset K$ such that $\mu(K\setminus K_{\delta})<\delta$, constant 
$C=C(K_{\delta})$,
such that if all the points
$q$,$q'$,$q_{1}$,$q_{1}'$,$u.q_{1}$ belong to $K_{\delta}$, we get
\begin{equation}
\left\lvert hd_{(u.q_{1})}(W^{uu}_{\text{loc}}(u.q_{1}), W^{uu}_{\text{loc}}(q'_{1})) - \left\lVert \mathbf{A}(q_1,u,\ell).F_{q}(q')\right\rVert \right\rvert \leq C \cdot e^{-\Lambda}.
\end{equation}
Moreover, for every $q\in K_{\delta}$, the subspace
$\text{ess-span}\left\{\cup_{q'\in W^{s}_{\text{loc}}(q)\cap K_{\delta}}F_{q}(q')\right\}\leq V(q)$ is a measurable $g_t$-equivariant subspace.
\end{thm}

The above theorem ``morally'' says that one may find a linear operator $\mathbf{A}$ depending only on the left hand side of the diagram, namely $q_{1},u,\ell$ such that this operator measures approximately the deviation in the slow unstable direction after applying the central-stable holonomy.

\begin{rem}
The above theorem is valid without any non-integrability assumption.
\end{rem}

One may extend the construction above into ``future times'', namely the top left hand side of the diagram, as long as $t$ can is bounded linearly in terms of $\ell$.

\begin{thm}[Factorization]\label{thm:factorization}
For any $\beta>0$ given, there exists a finite dimensional $g_{t}$-equivariant vector bundle $V$ over $M$ and a measurable equivariant section $F_{q}:W^{s}(q)\to V(q)$ and a family of linear maps $$A(q_1,u,\ell,t):~V(q)~\to~\Rc(g_{t}.u.q_1)\simeq~\mathbb{R}$$ and for every compact subset $K\subset M$ , for every $\delta>0$ there exists a subset $K_{\delta}\subset K$ such that $\mu(K\setminus K_{\delta})<\delta$ and constants $C=C(K_{\delta}), \alpha=\alpha(K_{\delta})$ such that if all the points
$q$,$q'$,$q_{1}$,$q_{1}'$,$u.q_{1},g_{t}.u.q_{1}\in~K_{\delta}$ we have that for $t \leq \beta \cdot \ell$ we get
\begin{equation}
\left\lvert hd_{g_{t}.(u.q_{1})}(W^{uu}_{\text{loc}}(g_{t}.u.q_{1}), W^{uu}_{\text{loc}}(g_{t}.q'_{1})) - \left\lVert A(q_1,u,\ell,t).F_{q}(q')\right\rVert \right\rvert \leq C \cdot e^{-\alpha \cdot \ell}.
\end{equation}
Moreover, for every $q\in K_{\delta}$, the subspace
$\text{ess-span}\left\{\cup_{q'\in W^{s}_{\text{loc}}(q)\cap K_{\delta}}F_{q}(q')\right\}\leq V(q)$ is a measurable $g_t$-equivariant subspace.
\end{thm}
The bundle $\mathcal{R}$ is defined in~\eqref{eq:Q-R-construction} and essentially captures the cocycle $\mathcal{R}$ defined in~\eqref{eq:R-defn}.

\begin{rem}
It is not correct that the derived action is contracting, as one may easily see that the slow unstable direction should \emph{grow} under $g_{t}$ in future times.
Nevertheless, one clever idea in the construction is that as long as one imposes a bound of the form $t\leq \beta\cdot \ell$, one may ``extend'' the vector bundle $V(q)$ to approximate the distance more and more, so even after flowing for time $t\leq \beta\cdot\ell$, the approximation error is still small.
In particular, the bundles $V(q)$ and the maps $F_{q}(q')$ in the the above theorems may be different, where the later bundles and maps have larger dimensions.
\end{rem}

We will apply this theorem combined with an a-priori growth bound given by the \emph{quantitative non-integrability} assumption over our system.
The following definition plays a crucial role in the Eskin-Mirzakhani factorization scheme.
\begin{defn}
Fix some $q'\in W^{s}(q)$.
Pick some $\ell$ large.
Assume that $0<\varepsilon<1$ is chosen such that $\varepsilon>dist(g_{\ell}.q,g_{\ell}.q')$.
We define the \emph{stopping time} $\mathsf{T}_{1}=\mathsf{T}_{1}(q,q',u,\ell,t)$ as
\begin{equation*}
    \mathsf{T}_{1} = \sup\left\{ t\geq 0 \mid hd_{g_{t}.(u.q_{1})}(W^{uu}_{\text{loc}}(g_{t}.u.q_{1}), W^{uu}_{\text{loc}}(g_{t}.q'_{1})) \leq \varepsilon\right\}.
\end{equation*}
\end{defn}
\begin{lem}\label{lem:a-priori}
Assume $(M,g_{t})$ is \emph{quantitatively non-integrable} with $q_{1}, u.q_{1}, q'_1$ chosen as in Definition~\ref{def:QNI2} then there exists some $\beta=~\beta(M,g_{t})>~0$ such that the stopping time $\mathsf{T}_{1,(\epsilon)}$ satisfies 
$$\mathsf{T}_{1,(\epsilon)} \leq \beta\cdot\ell.$$
\end{lem}
This Lemma will be proved in \S\ref{sub:a-priori-stopping}.
Morally speaking, the above Lemma determines an upper bound for the time $t=\mathsf{T}_{1}$ such that the strong unstable $W^{uu}(g_{t}.u.q_1)$ and the central-stable projection of $W^{uu}(g_{t}.u.q'_{1})$ are $\epsilon$ separated.

Combining the above Lemma with the Theorem yields
\begin{cor}\label{cor:approximation-of-stopping-time}
Taking $\beta$ as in Lemma~\ref{lem:a-priori} and applying Theorem~\ref{thm:factorization} adapted to this $\beta$, we may calculate the stopping time $\mathsf{T}_{1,(\epsilon)}$, up to an error of $O(e^{-\alpha\cdot\ell})$ as
\begin{equation}
    \mathsf{T}_{1,(\epsilon)} = \sup\left\{ t\leq \beta\cdot\ell \  \bigg\vert \ \lVert A(q_{1},u,\ell,t).F_{q}(q') \rVert \leq \epsilon \right\},
\end{equation}
for $q_{1},u.q_{1}$ satisfying QNI.
\end{cor}

We note that the quantitative non-integrability play only a role in determining the value of $\beta$ in the above theorem, see subsection~\ref{sub:a-priori-stopping} for details and the proof of Lemma~\ref{lem:a-priori}.

It will be crucial for the construction of the operator $A(q_{1},u,\ell,t)$ that we will only be interested in a range of times $t$ which are linearly bounded in $\ell$.

The map $A$ plays a prominent role in the work of Eskin-Mirzakhani~\cite[Section~$6$]{eskin-mirzakhani} and Eskin-Lindenstrauss~\cite[Section~$\S 3$]{eskin_lindenstrauss},\cite[Section~$\S 4$]{eskin_lindenstrauss-long} as it allows one to conclude the stopping time for the growth of the divergence between the two points $q_{2}=g_{t}.(u.q_{1})$ and $q'_{2}=g_{t}.(u.q'_{1})$ in the eight points scheme.

In the setting of Eskin-Lindenstrauss~\cite{eskin_lindenstrauss}, this is easily understood as the map $A$ which essentially ``translates'' between coordinates in $T_{q_{2}}M$ and $T_{q_{2}'}M$ is easily defined by an appropriate Adjoint action (as we may identify the corresponding tangent spaces with the Lie algebra).
In the setting of Eskin-Mirzakhani, one can define such a map (albeit in a much more complicated manner) using the affine structure defined over the moduli space (c.f. \cite{eskin-mirzakhani}, \S6, Proposition~$6.11$, equation $(6.25)$).

The main difference in these situations compared to the one we have at hand is that \emph{the stable projection is smooth} in both situations, thanks to the additional algebraic structure in both settings. 


Unfortunately, one cannot recover a complete identification of those tangent spaces in the general class of systems we consider here (not even for Anosov systems).. Nevertheless, our construction bellow allows one to ``approximately match'' $W^{u}(q_{1})$ and $W^{u}(q'_{1})$ such that we may approximately project $W^{uu}_{\text{loc}}(q_{1})$ into $W^{u}(q_{1}')$ through a smooth map (indeed, polynomial) , and this unstable part of the space is the one getting expanded by the dynamics, which allows one to recover the stopping time estimates needed in order to apply the eight points scheme.

Moreover, this construction cannot lead to a choice of a point $u.q_{1}'$, as we may not choose $u$ on $W^{uu}_{\text{loc}}(q_{1}')$ in any reasonable (say smooth) fashion with respect to the choice of $u.q$.
The formulation of Theorem~\ref{thm:factorization} gives the advantage of ``ignoring'' the first direction of divergence (which will be in the direction $E^{uu}(g_{t}.q_{1}')$) and achieving control over the divergence in the second subspace.

The construction is fairly technical but can be summarized as follows:
Given a point $u.q_{1}\in W^{uu}_{\text{loc}}(q_{1})$ we approximate, by means of a power series expansion, the manifolds $W^{s}(u.q_{1})$ and $W^{u}(q_{1}')$, and upon appropriate truncation, we may calculate a point $z\in W^{u}(q_{1}')$ which is the approximate stable projection of $u.q_{1}$. This is the contents of subsection~\ref{subsec:approximate-stable}.

From the point $z$ one may calculate the the ``horizontal'' divergence distance of $g_{t}.z$ from $g_{t}.u.q_{1}'$ (this in general can be done by normal forms coordinates, see the beginning of subsection~\ref{subsection:A-construction}) and then one may show that this distance is the distance that grows, as the ``vertical distance'' (in the form of the approximation of the center-stable projection) does not grow and stay bounded by $O(dist(q_1,q_1'))$.

The above construction will generate data related to the various points $q,q', q_{1},q'_{1},u.q,z$.
The theorem is formulated in a way that all the dependence is over the non-tagged points (namely the left hand side of the diagram), except for the initial data at $q'$. Hence for example we may not use any data which is dynamically calculated at $q'_1$ or $z$, except from data which has been preallocated into $V(q)$ by $F_{q}(q')$.
In order to make usage of this data, we use certain polynomial approximations to holonomies which are discussed in subsection. So for the rest of the chapter we will assume the data needed at $q_{1}'$ and $z$ is available, and in subsection~\ref{sub:approx-holonomies} we will explain how to remedy the situation. We will denote this set of data by diamond ($\Diamond$).

\subsection{The half-way points}
Consider the diagram as in Figure~\ref{fig:8-pts}.
Our first deviation from the Eskin-Mirzakhani scheme is to base our construction of $A(q,u,\ell,t)$ over the half way points $q_{1/2},q'_{1/2}$ instead of $q_{1},q'_{1}$.

In the previous works implementing the factorization technique, one could use the holonomy in order to relate $u.q_{1}$ and $u.q'_{1}$. This is no longer true in the case we consider here.
Nevertheless, the following construction can be summarized in the following observation:
\begin{obs}
Let $g_{t}.q,g_{t}.q'$ be two \emph{exponentially close} stably-related points, say $dist(g_{t}.q,g_{t}.q')<e^{-\lambda_{c}\cdot t}$.
Then by flowing backwards by means of the $g_{t}$ flow for some fraction of $t$, say $t'$, the flow box \\ $W^{u}_{\text{loc}}(g_{t}.q)\cup~ \left(W^{cs}_{\text{loc}}(g_{t}.q)\cap B_{e^{-\lambda_{C}\cdot t}}(q)\right)$ is contained in $B_{e^{-\lambda'_{C}\cdot t}}(g_{t-t'}.q)$ for some $\lambda'_{C}=~\lambda'_{C}(\lambda_{C},t'/t)$.
\end{obs}
This observation allows us to effectively shrink all the distance to be exponentially small. As we will use power series approximations, working in exponential scale ensures us that errors stay small even after taking high order approximations.

Recall that $q_{1}=g_{\ell}.q$, $q_{1}'=g_{\ell}.q'$, with $dist(q_{1},q_{1}')$ being exponentially small, as $q,q'$ are stably-related.
Assume that $u.q_{1}\in W^{u}(q_{1})$ for some $u.q_{1}$ such that $dist(u.q_{1},q_{1})=O(1)$. Notice that $dist(u.q_{1}, z) \approx~ dist(q_{1},q_{1}')$ for any point $z$ which is approximately the center-stable projection of $u.q_{1}$ to $W^{u}(q_{1})$, but nevertheless $dist(u.q_{1},q_{1}), dist(z,q_{1}') \approx O(1)$.
We would like to make all those distances small (in exponential scale) in order to have exponential bounds in the distances.
In order to remedy it, we will work in the half-way points.
We define $q_{1/2}=~g_{\ell/2}.q, q_{1/2}'=~g_{\ell/2}.q'$.
This construction is illustrated in Figure~\ref{fig:half-way}.
\begin{figure}[H]
    \centering
    \scalebox{0.85}{\begin{tikzpicture}
    \coordinate (q_1/2) at (2,0); 
    \coordinate (q'_1/2) at (2,2);
    \coordinate (x) at (0,0); 
    \coordinate (uq'_1/2) at (-0.5,2.3);
    \coordinate (z) at (0.2,1.7);
    \coordinate (q_1) at (-3,4);
    \coordinate (q'_1) at (-3,5);
    \coordinate (uq_1) at (-8,3.7);
    \coordinate (uq'_1) at (-8.6,5.3);

    \draw [fill=gray, opacity=0.2] (3,-0.3) to [bend left] (-1,-0.3) to (-1,0.7) to [bend right] (3,0.7) to (3,-0.3) ;
    \node[right] at (3,-0.3) {\smaller $W^{u}(q_{1/2})$};

    \draw [fill=gray, opacity=0.2] (3,1.7) to [bend left] (-1,1.7) to (-1,2.7) to [bend right] (3,2.7) to (3,1.7) ;
    \node[right] at (3,1.7) {\smaller $W^{u}(q'_{1/2})$};

    \draw [fill=gray, opacity=0.2] (-2.7,3.6) to [bend left] (-9,3.6) to (-9,4.6) to [bend right] (-2.7,4.6) to (-2.7,3.6) ;
    \node[right] at (-2.7,3.6) {\smaller $W^{u}(q_{1})$};

    \draw [fill=gray, opacity=0.2] (-2.7,4.9) to [bend left] (-9,4.9) to (-9,5.8) to [bend right] (-2.7,5.8) to (-2.7,4.9) ;
    \node[right] at (-2.7,4.9) {\smaller $W^{u}(q'_{1})$};

    \filldraw (q_1/2) circle (2pt) node[right]{$q_{1/2}$};
    \filldraw (q'_1/2) circle (2pt) node[right]{$q'_{1/2}$};
    \filldraw (x) circle (2pt) node[below]{$x$};
    \filldraw (z) circle (2pt) node[below right]{$z$};
    \filldraw (q_1) circle (2pt) node[below]{$q_1$};
    \filldraw (q'_1) circle (2pt) node[above left]{$q'_1$};
    \filldraw (uq_1) circle (2pt) node[ left]{$u.q_1$};
    \filldraw (-8,4.7) circle (2pt);

    \draw[loosely dotted] (q_1) to (q_1/2);
    \draw[loosely dotted] (q'_1) to (q'_1/2);
    \draw[loosely dotted] (uq_1) to (x);
    \draw[loosely dotted] (uq'_1/2) to (uq'_1);
    \draw[very thick] (q_1/2) to [bend left] (x);
    \draw[very thick] (q'_1/2) to [out=190, in=340](uq'_1/2);
    \draw[very thick] (q'_1) to [out=210, in=340](uq'_1);
    \draw[very thick] (q_1) to [bend left] (uq_1);

    \end{tikzpicture}}
    \caption{Change from $q_{1}, q'_{1}$ to the half-way points $q_{1/2}, q'_{1/2}$.}
    \label{fig:half-way}
\end{figure}
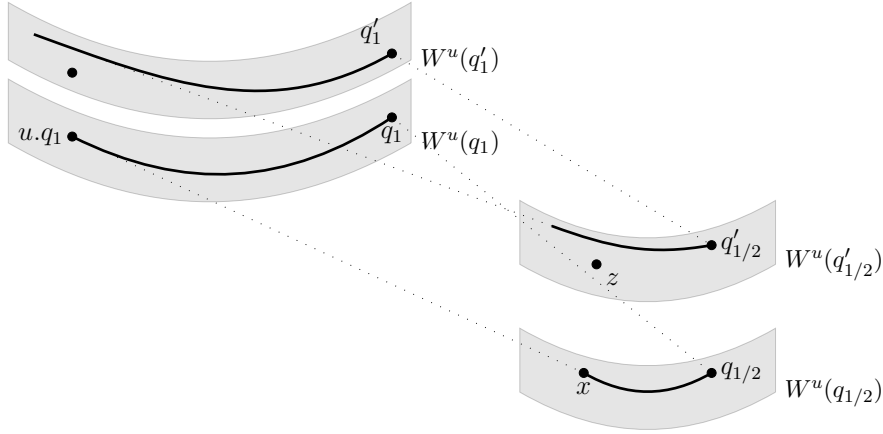

We note here that the midway points are still \emph{exponentially close}. More concretely, as $q,q'$ are stably-related, we have an exponent $\lambda_{C}<0$ as $dist(g_{t}.q,g_{t}.q') \ll_{q,q',\epsilon} e^{(\lambda_{C}+\epsilon)\cdot t}$ we get that $$dist(q_{1/2},q'_{1/2})\ll_{q,q',\epsilon} e^{0.5(\lambda_{C}+\epsilon)\cdot \ell}.$$

Furthermore, as $u.q_{1}\in W^{uu}_{\text{loc}}(q_{1})$ we get
$ dist(g_{-t}.u.q_{1},g_{-t}.q_{1}) \ll_{q_{1},u.q_{1},\epsilon}~e^{-(\lambda_{2}-\epsilon)\cdot t}$, in particular for $t=\ell/2$ we get
$$ dist(g_{-\ell/2}.u.q_{1}, q_{1/2}) \ll_{q_{1},u.q_{1},\epsilon} e^{-0.5(\lambda_{2}-\epsilon)\cdot \ell}. $$

Now assume $z$ is the stable projection of $x$ to $W^{u}(q_{1/2}')$ then we have $dist(x,z)\approx dist(q_{1/2},q_{1/2}') = O(0.5e^{(\lambda_{C}+\epsilon)\cdot\ell}).$
Moreover, $dist(z,q_{1/2}')$ can be approximated by the triangle inequality to be bounded by 
\begin{equation*}
    \begin{split}
        O(dist(x,z)+dist(x,q_{1/2})+dist(q_{1/2},q_{1/2}')) &= O\left(e^{-(\lambda_{2}-\epsilon)\cdot\ell/2}+e^{-(\lambda_{C}+\epsilon)\cdot\ell/2}\right) \\
        &= O\left(e^{-0.5(\lambda'-\epsilon)\cdot \ell} \right),
    \end{split}
\end{equation*}
for $\lambda'=\min\left\{\lambda_{2},-\lambda_{C}\right\}>0$.

\subsection{A-priori bound over the stopping time}\label{sub:a-priori-stopping}

Recall we under the assumption that $(M,g_{t})$ satisfies the quantitative non-integrability condition.
As such, the dynamical quadrilateral $\mathcal{Q}$ formed from $q_{1/2}$,$q'_{1/2}$ and $x$ satisfies that the distance between the stable projection of $x$, $W^{s}(x)\cap~W^{u}(q_{1/2}')$ and $W^{uu}_{\text{loc}}(q_{1/2}')$ is bounded from below by $dist(q_{1/2},q_{1/2}')^{\mathfrak{w}}$.
Using the notations of Definition~\ref{def:QNI2}, we require $x\in U_{q'_{1/2}}$ and $q'_{1/2}\in~S_{q_1/2}$.
When applying the $g_{t}$ dynamics, this distance grows at-most by a factor of $e^{\lambda_{1}\cdot t}$, for some expansion constant $\lambda_{1}$ depending on the Lyapunov spectrum. Hence as we want to bound from above the stopping time $\tau_{1}$, we get the a-prior bound - 
\begin{equation}
    e^{(\lambda_{1}-\epsilon)\cdot \tau_{1}}\cdot dist(q_{1/2},q_{1/2}')^{^{\mathfrak{w}}} \leq 1.
\end{equation}
Recall that $ dist(q_{1/2},q_{1/2}') \leq D\cdot  e^{0.5(\lambda_{C}+\epsilon)\cdot\ell}\cdot dist(q,q')$, for some constant $D$ (which can be made uniform over sets of arbitrarily large measure). Hence we get the a-priori bound 
\begin{equation}\label{eq:a-priori-bound}
    \tau_{1} \leq \frac{-0.5\cdot (\lambda_{C}+\epsilon)\cdot \ell}{\lambda_{1}-\epsilon} \leq 4\cdot\frac{\left\lvert \lambda_{C} \right\rvert}{\lambda_{1}} \cdot \ell,
\end{equation}
where the last inequality is true for a choice of small enough $\epsilon$, for example $\epsilon<\lambda'/2$.

Picking for example 
$\beta = 8\cdot \frac{\left\lvert \lambda_{\max\text{ con}} \right\rvert}{\lambda_{1}} $, where $\lambda_{\max\text{ con}}$ stands for the most negative Lyapunov exponent of $(M,g_{t},\mu)$,
to be the constant given in the factorization Theorem~\ref{thm:factorization}, concludes the demonstration of Lemma~\ref{lem:a-priori}. 

\begin{rem}
In the second definition of QNI, this estimate also follows easily. Consider \eqref{eq:QNI-full-def}, applying $g_{t}$, we see that the distance grows by an exponential factor of $e^{(\lambda_{1}\pm \epsilon)\cdot t}$. 
Moreover, for $q_{1/2},q'_{1/2} \in \mathcal{L}$ we see that for all $k>k_0$ we have
$dist(g_{t}.q_{1/2},g_{t}.q'_{1/2})\geq C\cdot e^{\alpha\cdot k}\cdot e^{(\lambda_{1}\pm\epsilon)\cdot k}.$
Using the same analysis as above, we see that we may bound $\tau_{1}$ by $\tau_{1}\leq \frac{\alpha\cdot \lambda_{C}}{2\cdot \lambda_{1}}\ell$.
\end{rem}

\subsection{Approximating the intersection point $z$}\label{subsec:approximate-stable}
For the reminder of the paper, we choose a (measurable) $g_{t}$-equivariant trivialization of the tangent bundle $TM$, which exists $\mu$-almost everywhere.

Consider $x=~u.q_{1/2}$ for some $u.q_{1/2}\in W^{uu}_{\text{loc}}(q_{1/2})$.
We have the smooth curve $W^{s}(x)$.
Expand this curve in a Taylor polynomial, 
\begin{equation}\label{eq:stable-expansion}
    W^{s}(x) = P_{x,N}(\overline t) + R_{x,N}(\overline t),
\end{equation} with $\deg P_{x,N} \leq N$ being the Taylor expansion of $W^{s}(x)$ of order $N$ and we have the reminder estimate  $R_{x,N}(\overline t) \leq O(\lVert \overline t\rVert ^{N+1})$.
Furthermore, we have the smooth immersed manifold $W^{u}(q'_{1/2})$, which we expand as well
\begin{equation}\label{eq:unstable-exp}
    W^{u}(q_{1/2}') = P_{q'_{1/2},M}(\overline s) + R_{q'_{1/2},M}(\overline s)    \tag{\(\Diamond\)} 
\end{equation}
with $\deg P_{q'_{1/2},M} \leq M$ being the Taylor expansion of $W^{u}(q'_{1/2})$ of order $M$ and we have the reminder estimate  $R_{q'_{1/2},M}(\overline s) \leq O(\lVert \overline s\rVert ^{M+1})$.

We choose $N$ such that $$\sup\{\lvert R_{x,N}(\overline t) \rvert \mid \lVert\overline t \rVert \leq O(dist(q_{1/2},q'_{1/2})\} \ll e^{-\alpha \cdot \ell}$$ for our given $\ell,\alpha$.
Now we choose $M$ such that $$\sup\{\lvert R_{q'_{1/2},M}(\overline s) \rvert \mid \lVert\overline s \rVert \leq O(dist(q_{1/2},q'_{1/2})\} \ll e^{-\alpha \cdot \ell} \cdot e^{-C\cdot \ell}$$ for our given $\ell,\alpha$ as well, where $C$ is chosen such that $\lambda_{1}\cdot \beta \leq C$.

We solve for $\overline t,\overline s$ such that 
\begin{equation*}
    \lVert P_{x,N}\left(\overline t\right) - P_{q'_{1/2},M}\left(\overline s\right) \rVert \ll e^{-\alpha \cdot \ell}.
\end{equation*}

This gives a point in space which up to a modification of size $O(e^{-\alpha \cdot \ell})$ belongs to $W^{cs}(x)\cap W^{u}(q'_{1/2})$. We modify if needed and we call this point $z$, see Figure~\ref{fig:z-construction}.

\begin{figure}[H]
    \centering
    \scalebox{1}{\begin{tikzpicture}
    \coordinate (q_1/2) at (5,0); 
    \coordinate (q'_1/2) at (5,4);
    \coordinate (x) at (0,0); 
    \coordinate (z) at (0.5,3.5);
    \coordinate (uq'_1/2) at (0.4,4.5);

    \draw [fill=gray, opacity=0.2] (6,3.5) to [bend left] (-1,3) to (-0.8,6) to [bend right] (6,6) to (6,3.5) ;
    \node[right] at (6,3.5) {\smaller $W^{u}(q'_{1/2})$};
    
    \draw [fill=gray,opacity=0.2] (6,-0.5) to [bend left] (-1,-1) to (-0.8,2) to [bend right] (6,2) to (6,-0.5) ;
    \node[right] at (6,-0.5) {\smaller $W^{u}(q_{1/2})$};
    
    \draw[very thick] plot [smooth] coordinates {(q_1/2) (x)};
    \node[below] at (3,0) {\smaller $W^{uu}_{loc}\left(q_{1/2}\right)$};

    \draw[very thick] (q'_1/2) to [bend left] (uq'_1/2) ;
    \node[below] at (3,3.5) {\smaller $W^{uu}_{loc}\left(q'_{1/2}\right)$};

    \draw plot [smooth] coordinates {(x) (z)};
    
    \filldraw (q_1/2) circle (2pt) node[right]{$q_{1/2}$};
    \filldraw (q'_1/2) circle (2pt) node[right]{$q'_{1/2}$};
    \filldraw (x) circle (2pt) node[below]{$x$};
    \filldraw (z) circle (2pt) node[above left]{$z$};
    \draw[dashed] (q_1/2) to (q'_1/2);
    
    \end{tikzpicture}}
    \caption{Illustration of $z$.}
    \label{fig:z-construction}
\end{figure}
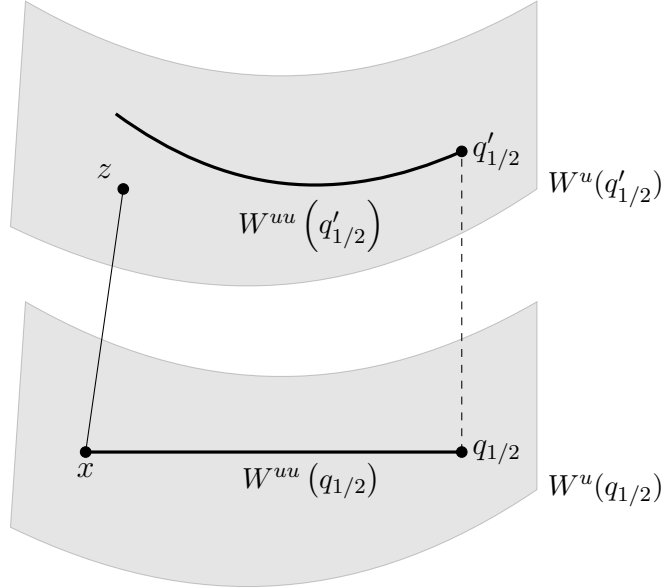

\subsection{Construction and matching of flags at $z$}\label{subsec:matching-flags}
The next subsections deal with the actual measurement of the approximation.
Notice that the point $z$ is calculated as an approximate intersection of $W^{cs}(u.q_{1/2})$ and $W^{u}(q'_{1/2})$. Notice that the coordinate systems where we calculate the point $z$ is different.
In order to conclude invariance in $W^{u}(z)$, we will need to measure its deviation from $W^{uu}(z)$ (in terms of normal form coordinates). Moreover, we will need to do so in terms of $q$-data. 
In order to do so, we will translate the $z$ points coordinates from $W^{u}(z)$ to $W^{cs}(u.q)$ and then later to $W^{u}(u.q)$.

In order to do so, we will need to consider the data encoding the forward flag at $z$ (which is smooth along $W^{u}(z)=W^{u}(q'_{1/2})$ and the backwards flag along $W^{cs}(u.q)$.

We will also need the following theorem due to D. Ruelle:
\begin{thm}[\cite{ruelle1979ergodic} Theorem~$6.3.(b)$]\label{thm:smoothness-leaves}
The forward flag $\tilde{\mathcal{Q}}(\star)$ is \emph{smooth} along \emph{stable} leaves. The backward flag $\tilde{\mathcal{R}}(\star)$ is \emph{smooth} along \emph{unstable} leaves.
\end{thm}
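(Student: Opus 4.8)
The plan is to prove smoothness of the forward flag along stable leaves; the dual assertion (backward flag, unstable leaves) then follows by applying the identical argument to $g_{-t}$. Throughout I would fix a local stable leaf $W^{s}_{\mathrm{loc}}(x_{0})$ through an Oseledets–regular point and allow all constants to depend on $x_{0}$: smoothness along a single leaf is a statement over one forward orbit, so the transversal measurability of the Lyapunov splitting never enters.

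First I would reduce to a single subbundle. The forward flag at $y$ is a filtration $0\subsetneq \mathcal V^{(1)}_{y}\subsetneq\cdots\subsetneq\mathcal V^{(r)}_{y}=T_{y}M$ by $Dg_{t}$–invariant subbundles with consecutive terms separated by a gap in the Lyapunov spectrum; a flag is $C^{\infty}$ along a leaf iff each of its members is, and passing to the quotient bundles $\mathcal V^{(j)}/\mathcal V^{(j-1)}$ reduces matters to proving that one such invariant subbundle $\mathcal V$ is $C^{\infty}$ along $W^{s}_{\mathrm{loc}}(x_{0})$. I would then choose a smooth reference splitting $TM=\mathcal A\oplus\mathcal B$ along the forward orbit of $x_{0}$ with $\dim\mathcal A=\dim\mathcal V$ and with $\mathcal B$ modelling the complementary (faster‑growing) Lyapunov directions; over $W^{s}_{\mathrm{loc}}(x_{0})$ the subbundle $\mathcal V_{y}$ is the graph of a linear map $P_{y}\colon\mathcal A_{y}\to\mathcal B_{y}$, and $Dg_{t}$–invariance becomes a fixed‑point equation $P=\Phi_{t}(P)$ for the induced graph transform $\Phi_{t}$ acting on sections $y\mapsto P_{y}$ over the leaf.

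Second, I would run the contraction argument in Lyapunov norms. Using Pesin's Lyapunov inner products (\cite{pesin}, \S 3.5) together with the norm‑comparison Lemma~\ref{lem:norm-comp}, and the fact that $\mathcal B$ outgrows $\mathcal A$ by a definite exponential gap, one checks that for $t$ large $\Phi_{t}$ is a fibrewise contraction over the base map $g_{t}\colon W^{s}_{\mathrm{loc}}(x_{0})\to W^{s}_{\mathrm{loc}}(g_{t}x_{0})$, which is itself a $C^{\infty}$ contraction at the stable rate $\lambda_{C}$. Hence there is a unique continuous invariant section, which is $\mathcal V$. The upgrade to $C^{\infty}$ along the leaf is the heart of the matter: because $g_{t}$ with $t>0$ \emph{contracts} $W^{s}$, every derivative in directions tangent to $W^{s}$ of the flow and of its derivative cocycle stays uniformly bounded (indeed decays) as $t\to\infty$, so $\Phi_{t}$ preserves and contracts on the space of sections that are $C^{k}$ along $W^{s}_{\mathrm{loc}}(x_{0})$, for \emph{every} $k$. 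Invoking the Hirsch--Pugh--Shub $C^{k}$ section theorem (or checking directly that the finite‑time iterates $\Phi_{t}(0)$ form a Cauchy sequence in $C^{k}$), the fixed point $P$ is $C^{k}$ along $W^{s}_{\mathrm{loc}}(x_{0})$ for all $k$, hence $C^{\infty}$ there; this gives smoothness of the forward flag along stable leaves, and the $g_{-t}$ version gives the backward flag along unstable leaves.

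The main obstacle I anticipate is the bookkeeping needed so that the contraction estimates survive the subexponential (tempered) fluctuations of the Lyapunov norms and of the Lyapunov charts: one must confirm that, after absorbing the tempered factors $e^{\varepsilon t}$, the net rate in the graph‑transform contraction still beats $e^{k\lambda_{C}t}$ for $t$ large, for every fixed $k$. This is precisely where restricting to a single stable leaf pays off — along it the base dynamics has honest exponential contraction and all tempered constants are frozen — and it is what lets the $C^{k}$ section theorem apply for all $k$ simultaneously, yielding $C^{\infty}$ rather than only finite regularity along the leaf.
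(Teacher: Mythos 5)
The statement is quoted from Ruelle and the paper does not supply its own proof, so the comparison is against the standard graph-transform argument. Your strategy is the right one and captures the decisive point: along a single stable leaf the base dynamics is an honest $C^{\infty}$ contraction, and this is what promotes the regularity of the invariant section from H\"older (what one gets transversally) to $C^{\infty}$ rather than a fixed finite order.

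However, the orientation of the graph transform is reversed, and the same sign flip reappears in your closing quantitative check, so a reader following the argument literally could not close it. With your choice $\mathcal{A}=\mathcal{V}$ a member of the slow (forward) flag and $\mathcal{B}$ the faster-growing complement, the \emph{forward} graph transform $\Phi_{t}$ is a fiber \emph{expansion}: a perturbation of the graph $P$ into the $\mathcal{B}$-directions is amplified by roughly $e^{(\lambda_{\mathrm{fast}}-\lambda_{\mathrm{slow}})t}>1$ under $Dg_{t}$. The convergent iteration is the inverse transform, $P^{(n+1)}_{y}=\Phi_{-t}\bigl(P^{(n)}_{g_{t}y}\bigr)$: one evaluates at $g_{t}y$ (forward, into the contracted part of the leaf) and pulls back by $Dg_{-t}$. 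In Hirsch--Pugh--Shub bookkeeping this fiber contraction covers $g_{-t}|_{W^{s}}$, and the $C^{k}$ section condition reads $\kappa\cdot\lVert D(g_{t}|_{E^{s}})\rVert^{k}<1$, i.e.\ $\kappa\cdot e^{-k\lambda_{C}t}<1$ --- a product of two sub-unit factors, automatic for every $k$. Your final paragraph instead demands that the graph-transform contraction ``beat $e^{k\lambda_{C}t}$'', which is the reciprocal inequality and \emph{fails} for $k$ large; it is exactly the obstruction you would face if $\Phi_{t}$ really did cover $g_{t}$. A second, smaller inaccuracy: $\partial^{k}_{s}Dg_{t}$ does \emph{not} stay uniformly bounded along the leaf (it grows like $\lVert Dg_{t}\rVert$); what decays is $\partial^{k}_{s}\bigl(g_{t}|_{W^{s}}\bigr)$, and that, together with the smoothness and fiber contraction of the one-step map $\Phi_{-1}$ iterated along the forward orbit, is what actually closes the $C^{k}$ estimate for every $k$.
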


Consider the point $z\in W^{cs}(x)$. By Ruelle's theorem, the map $z\mapsto \tilde{Q}(z)$ is a smooth map along $W^{cs}(x)$. We denote this map by 
\begin{equation}\label{eq:Qc-def}
T^{cs}_{x}(z)=\tilde{\Qc}(z).
\end{equation}
Similarly the map $z\mapsto \tilde{R}(z)$ is a smooth map along $W^{u}(q'_{1/2})$.
We denote this map as 
\begin{equation}\label{eq:R-definition}
    T^{u}_{q_{1/2}}(z)=\tilde{\Rc}(z).\tag{\(\Diamond\Diamond\)} 
\end{equation}

where we identify the subspaces according to our chosen trivialization.

As $x,q_{1/2}'$ are chosen in a good set with respect to Oseledets theorem, and the measurable section from the manifold to the Grassmannian bundle matching each point with its Oseledets splitting is Holder continuous, we have that there exist some $\zeta>0$
\begin{equation}
    dist(\tilde{\Qc}(x),\tilde{\Qc}(q'_{1/2})) ,dist(\tilde{\Rc}(x),\tilde{\Rc}(q'_{1/2})) \ll dist(x,q'_{1/2})^{\zeta},
\end{equation}
by \cite[Theorem~$A$]{filip}.

Using the smoothness of the the forward flag along stable leaves we get:
\begin{cor}\label{cor:Q-close}
For any $t\geq 0$ we have that
\begin{equation}
    dist(\Qc(g_{t}.x),\Qc(g_{t}.z)) \ll dist(g_{t}.x,g_{t}.z)^{\zeta} \ll e^{-\lambda_{C}\cdot\zeta\cdot t}\cdot dist(x,z)^{\zeta}.
\end{equation}
\end{cor}
As $x,z$ are central-stably related, we can apply local structure coordinates around $g_{t}.x,g_{t}.z$ for $t\geq 0$. 
By the definition of $\mathcal{Q}$ one may ignore the central direction.
The rest follows at once by the smoothness of the entire flag.
We note that by the definition of $\tilde{\mathcal{Q}}$.

We also have
\begin{obs}
$dist(\tilde{\Rc}(x),\tilde{\Rc}(z)) \ll dist(x,z)^{\zeta}.$
\end{obs}
This estimate follows by the triangle inequality as follows
\begin{align*}
    dist(\tilde{\Rc}(x),\tilde{\Rc}(z)) &\leq dist(\tilde{\Rc}(x),\tilde{\Rc}(q'_{1/2})) + dist(\tilde{\Rc}(q'_{1/2}),\tilde{\Rc}(z)) \\
    &\ll dist(x,q'_{1/2})^{\zeta} + dist(q'_{1/2},z)^{\zeta} \\
    &\ll dist(x,z)^{\zeta}.
\end{align*}

Now we arrive to the main technical Lemma, which allows us to control the error between the backwards flags along $g_{t}.x$ and $g_{t}.z$ and in particular show its error grows at-most polynomially in their distance
\begin{lem}\label{lem:R-close}
For any $t\geq 0$ we have that 
\begin{equation*}
    dist(\tilde{\Rc}(g_{t}.x),\tilde{\Rc}(g_{t}.z)) \ll dist(g_{t}.x,g_{t}.z)^{\zeta}.
\end{equation*}
\end{lem}

For the proof of this Lemma, we will need the following two technical Lemmata, essentially due to Brin~\cite{brin1995ergodicity,katok2001smooth, filip}.
\begin{lem}\label{lem:bound-over-derivative}
Let $g_{1}:M\to M$ be a smooth map, where $M$ is a compact manifold.
Let $d_\star g^{1}$ denotes the associated derivative map of $g_{1}$.
For any $a>\max_{p\in M}\lVert d_{p}g_{1} \rVert ^{1+\zeta}$ there exists some $D>1$ such that for every $n\in \mathbb{N}$ and every $p_{1},p_{2}\in M$ we have
\begin{equation*}
    \lVert d_{p_1}g_{1}^{n} - d_{p_2}g_{1}^{n} \rVert \leq D\cdot a^{n}\cdot dist(p_{1},p_{2})^{\zeta}.
\end{equation*}
\end{lem}
For its proof see~\cite[Lemma~$A.2$]{katok2001smooth}. We remark that by appropriately renormalizing time, the proof of the Lemma works verbatim in the continuous time settings as well.
\begin{lem}\label{lem:lin-alg-growth}
Assume that $\mathbb{R}^{n}=E\oplus E'$ and $\left\{ A_{n}\right\}_{n\in \mathbb{N}},\left\{ B_{n}\right\}_{n\in \mathbb{N}}$ are two sequences of invertible linear operators such that for all $v\in E$
\begin{equation*}
    \lVert A_{n}.v \rVert \geq e^{(\lambda_{1}-\varepsilon)n}\cdot \lVert v \rVert,
\end{equation*}
and for all $v\in E'$
\begin{equation*}
    \lVert A_{n}.v \rVert \leq e^{(\lambda_{1}-2\varepsilon)n}\cdot \lVert v \rVert,
\end{equation*}
for some $\varepsilon>0$, and
\begin{equation*}
    \lVert A_{n}-B_{n} \rVert \ll e^{(\lambda_{1}-2\varepsilon)n}.
\end{equation*}
Furthermore, assume that $dist(E,F)\ll 1$ for some linear subspace $F\subset \mathbb{R}^n$ then
\begin{equation}
    dist(A_{n}.E,B_{n}.F) \ll 1.
\end{equation}
\end{lem}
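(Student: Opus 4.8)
The plan is to reduce the statement to a computation on the image of a unit vector and then exploit the spectral gap hypothesis. First I would set up coordinates: write $\mathbb{R}^n = E \oplus E'$ and let $\pi_E, \pi_{E'}$ be the associated (not necessarily orthogonal, but bounded) projections. Since $\dist(E,F) \ll 1$, I would decompose any unit vector $f \in F$ as $f = e + e'$ with $e = \pi_E f$, $e' = \pi_{E'} f$, where $\|e\| \gg 1$ (bounded below, because $F$ is close to $E$ and hence its projection to $E$ is close to an isometry) and $\|e'\| \ll 1$. The goal is to show that the unit vector $A_n f / \|A_n f\|$ stays within $O(1)$ — in the sense of the metric on the Grassmannian / on directions — of the unit vector $B_n f / \|B_n f\|$, uniformly in $n$; taking the span over $f \in F$ then gives $\dist(A_n E, B_n F) \ll 1$. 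Wait — more carefully, the statement compares $A_n . E$ with $B_n . F$, so I would actually show that for every unit $f \in F$ the direction of $B_n f$ is $O(1)$-close to the subspace $A_n E$, and symmetrically; since this is only an $O(1)$ bound (not a decay estimate) it suffices to produce, for each such $f$, a vector in $A_n E$ whose direction matches that of $B_n f$ up to bounded error.

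The key estimates are the following. For the $A_n$-part: $A_n f = A_n e + A_n e'$, with $\|A_n e\| \ge e^{(\lambda_1 - \varepsilon) n}\|e\| \gg e^{(\lambda_1-\varepsilon)n}$ and $\|A_n e'\| \le e^{(\lambda_1 - 2\varepsilon)n}\|e'\| \ll e^{(\lambda_1-2\varepsilon)n}$, so the ratio $\|A_n e'\| / \|A_n e\| \ll e^{-\varepsilon n} \to 0$; hence $A_n f$ is, in direction, exponentially close to $A_n e \in A_n E$. For the comparison of $B_n$ with $A_n$: write $B_n f = A_n f + (B_n - A_n) f$, and bound $\|(B_n - A_n) f\| \le \|B_n - A_n\| \|f\| \ll e^{(\lambda_1 - 2\varepsilon) n}$, which is again a factor $\ll e^{-\varepsilon n}$ smaller than $\|A_n f\| \ge \|A_n e\| \gg e^{(\lambda_1 - \varepsilon) n}$. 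Combining the two, the direction of $B_n f$ differs from that of $A_n e \in A_n E$ by $O(e^{-\varepsilon n})$, in particular by $O(1)$ uniformly in $n$. Running the same argument with the roles of $A_n E$ and $B_n F$ interchanged — here I use that $A_n$ is invertible, so $A_n E$ is spanned by vectors $A_n e$ with $e \in E$, and $\dist(E, F) \ll 1$ lets me match each such $e$ with a nearby $f \in F$, then apply the bounds above to $B_n f$ versus $A_n e$ — gives the reverse inclusion up to $O(1)$, and hence $\dist(A_n E, B_n F) \ll 1$.

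The main obstacle, and the place requiring the most care, is the uniformity in $n$ of the lower bound $\|e\| \gg 1$ in the decomposition $f = e + e'$: this relies on $\dist(E,F)$ being small enough that the projection $\pi_E|_F$ is invertible with bounded inverse, and one has to make sure the implied constants do not depend on $n$ (they don't, since $E$ and $F$ are fixed subspaces, but this should be stated). A secondary technical point is translating "closeness of directions of vectors" into "closeness of subspaces in the Grassmannian metric" when $E$ (and hence $A_n E$) may have dimension larger than one: there one picks a basis of $E$ adapted to the splitting, applies the per-vector estimate to each basis element, and uses that a bounded perturbation of each basis vector produces a bounded perturbation of the spanned subspace — this is where one also needs that the vectors $A_n e$ for $e$ ranging over a fixed basis of $E$ remain uniformly "non-degenerate" (bounded angles), which again follows from invertibility of $A_n$ on $E$ together with the growth lower bound, possibly after shrinking $\varepsilon$. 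Modulo these uniformity bookkeeping issues the argument is a short spectral-gap estimate.
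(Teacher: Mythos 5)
Your argument matches the paper's proof exactly: decompose each unit $f\in F$ as $e+e'$ with $e\in E$, $e'\in E'$ (the hypothesis $\dist(E,F)\ll 1$ giving $\lVert e\rVert\gg 1$ uniformly), write $B_n f = A_n e + \bigl(A_n e' + (B_n - A_n) f\bigr)$, and bound the second summand by $\ll e^{(\lambda_1-2\varepsilon)n}$ against $\lVert A_n e\rVert \gg e^{(\lambda_1-\varepsilon)n}$ to get $\dist(B_n f, A_n E)\ll e^{-\varepsilon n}$. The uniformity and Grassmannian bookkeeping you flag at the end are the right points to worry about; the paper leaves them implicit but your treatment is correct.
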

\begin{proof}

Pick $v\in F$, We may write $v=v_{1}+v_{2}$ with $v_{1}\in E, v_{2}\in E'$.
By our assumption, $\lVert v_{1} \rVert \geq C\cdot \lVert v \rVert$.
Now we do the following calculation
\begin{equation*}
\begin{split}
    B_{n}.v &= A_{n}.v + (B_{n}-A_{n}).v \\
    &= A_{n}.v_{1} + (A_{n}.v_{2}+(B_{n}-A_{n}).v).
\end{split}
\end{equation*}
Note that $A_{n}.v_{1}\in A_{n}.E$ and
\begin{equation*}
\begin{split}
    \lVert A_{n}.v_{1} \rVert &\geq e^{(\lambda_{1}-\varepsilon)n}\lVert v_{1} \rVert \\
    &\geq C\cdot e^{(\lambda_{1}-\varepsilon)n}\lVert v \rVert.
\end{split}
\end{equation*}
Furthermore, we have that
\begin{equation*}
    \lVert A_{n}.v_{2} + (B_n-A_n).v \rVert \ll e^{(\lambda_{1}-2\varepsilon).n}\cdot \lVert v \rVert.
\end{equation*}
Hence by normalizing the projection of $B_{n}.v$ over $A_{n}.E$ we get that
\begin{equation*}
    dist(B_{n}.v , A_{n}.E) \ll e^{-\varepsilon\cdot n}.
\end{equation*}
\end{proof}

Now we may complete the proof of Lemma~\ref{lem:R-close}.
Using Lemma~\ref{lem:bound-over-derivative}, there exists some $a=a(M,g_{t})$ and $\beta=\beta(M,g_{t})>0$ such that for all $x,z\in M$
\begin{equation*}
\lVert d_{x}g_{t} - d_{z}g_{t} \rVert \leq a^{t}\cdot dist(x,z)^{\zeta}.
\end{equation*}
Specializing to our chosen $x,z$, assuming $x,z$ are close enough (which is $O(dist(q_{1/2},q'_{1/2}))$), we have that 
\begin{equation*}
\lVert d_{x}g_{t} - d_{z}g_{t} \rVert \leq e^{(\lambda_{1}-\varepsilon) t}    
\end{equation*}
for any $t\ll_{dist(x,z)} 1$, by Lemma~\ref{lem:bound-over-derivative}. We note we may pick the upper bound for $t$ such that $t$ is arbitrarily large (larger than any fixed predefined value), which will allow us to assume that $g_{t}.x $ is a good-Oseledets point for some $t$ in the range where this estimate is valid as $x$ was defined in the good set.

Applying Lemma~\ref{lem:lin-alg-growth} we get that $dist(\tilde{\mathcal{R}}(g_{t}.x),\tilde{\mathcal{R}}(g_{t}.z)) \ll 1$.
Repeating the argument with $g_{t}.x,g_{t}.z$ (where we have \\ $dist(g_{t}.x,g_{t}.z)\leq~dist(x,z)$), we may get this estimate for any $t$.

Furthermore, we define the following vector bundles (at $x$, and in general at any Oseledets good point)
\begin{equation}\label{eq:Q-R-construction}
    \Qc(x) = E^{\leq\lambda_{1}}(x)/E^{<\lambda_1}, \Rc(x)=E^{\geq \lambda_{1}}(x)/E^{>\lambda_{1}}.
\end{equation}
We notice that $\Qc$ and $\Rc$ are sub-quotients of the forwards and backwards flags, respectively.
The construction clearly extends to the point $z$ via the endowment of the flags given in the beginning of this subsection.

We notice that both $\Qc$ and $\Rc$ are $1$-dimension vector bundles. At any Osceldets bi-regular point $x\in M$, we get an isomorphism

\begin{equation*}
    \Qc(x) \simeq E^{\lambda_1}(x) \simeq \Rc(x)
\end{equation*}

As a result, by composing thhe first isomorphism with the other, we may define the set of functions 
\begin{equation}\label{eq:I-def}
    I_{*}^{t} : \Qc(\star) \to \Rc(\star)
\end{equation} as members of $Hom(\Qc(g_{t}.\star),\Rc(g_{t}.\star))$ for $\star$ being either $x$ or $z$ as the function identifying the appropriate sub-spaces.

In principle, $I$ is a scalar comparing the norms of those one dimensional bundles.

For $\star =x$ and $x$ was assumed to be a regular point, the Oseledts subspaces are transverse for $g_{t}.x$ in a good set, hence there exists a family of invertible maps $I^{t}_{x}:\Qc(x)\to \Rc(x)$ and $0<c<C$ such that for any $t$ in a good set, $c<\lvert I_{x}^{t} \rvert <C$ and $g_{t}.I_{x} = I_{x}^{t}$.

This provides an \emph{identification} between the two subspaces by means of the functions $I_{\star}^{0}$ for a subset of $M$ of arbitrarily large measure.

For $\star = z$, for the same set of times $t$, using Corollary~\ref{cor:Q-close} and the technical Lemma~\ref{lem:R-close} we have that for some (possibly smaller or larger) $c'<C'$ we have that
$c' < \lvert I_{z}^{t} \rvert < C'$ and similarly $g_{t}.I_{z}=I_{z}^{t}$.

\begin{rem}
During the course of the proof, while using Lemma~\ref{lem:bound-over-derivative}, as we may enlarge $\ell$ if we wish, we can effectively shrink $dist(q_{1/2},q_{1/2}')$ hence making $\lVert d_{x}g_{t} - d_{z}g_{t} \rVert$ arbitrarily small.
\end{rem}

\subsection{Construction of a transfer function}\label{subsection:A-construction}
While ideally one would like to work with normal forms coordinates~\cite{kalinin} in order to calculate the distance between $z$ and $W^{uu}_{\text{loc}}(q'_{1/2})$ inside the unstable manifold $W^{u}(q'_{1/2})$, due to the simplicity of the situation we have in hand, we may overcome this difficulty by working with the vector bundle $\Qc$ (note that due to the sub-resonance condition, the last expression in the normal forms coordinates is indeed a single scalar).

We will assume now that $x,z$ are stably related, later we will indicate how to modify the construction in the case where $x,z$ are center stably related.

For $x,z$ we define the following functions:
\begin{equation*}
    \varphi_{x}^{t} = \lVert g_{t}.v \rVert_{\Qc(x)}/\lVert v \rVert_{\Qc(x)}, \varphi_{z}^{t} = \lVert g_{t}.u \rVert_{\Qc(z)}/\lVert u \rVert_{\Qc(z)},
\end{equation*}
for any non-zero $v,u$ in $\Qc(x),\Qc(z)$ respectively. It is easy to verify that $\varphi$ is independent from the choice of $v,u$.

\begin{lem}\label{lem:holonomy-L}
The limit
$$L(x,z)=\lim_{t\to\infty} \frac{\varphi_{z}^{t}}{\varphi_{x}^{t}}$$ exists.
\end{lem}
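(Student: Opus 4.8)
The plan is to show that the ratio $\varphi_z^t/\varphi_x^t$ is Cauchy as $t\to\infty$, so that the limit $L(x,z)$ exists. First I would record the cocycle property: both $\varphi_x^t$ and $\varphi_z^t$ are multiplicative cocycles over the flow, in the sense that $\varphi_\star^{t+s}=\varphi_{g_s.\star}^{t}\cdot\varphi_\star^{s}$, where we evaluate the norm growth on the one-dimensional bundle $\Qc$. Hence the quotient $\psi^t := \varphi_z^t/\varphi_x^t$ satisfies $\psi^{t+s}=\psi^s\cdot\bigl(\varphi_{g_s.z}^{t}/\varphi_{g_s.x}^{t}\bigr)$, and it suffices to estimate, for large $s$, how far the factor $\varphi_{g_s.z}^{t}/\varphi_{g_s.x}^{t}$ is from $1$ uniformly in $t\geq 0$.

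The key geometric input is that $x$ and $z$ lie on a common stable (or center-stable) leaf, so $\dist(g_s.x, g_s.z)\ll e^{-\lambda_C\cdot\beta\cdot s}\cdot\dist(x,z)^{\beta}$ decays exponentially in $s$ (this is exactly Corollary~\ref{cor:Q-close}, together with the smoothness of the forward flag along stable leaves from Theorem~\ref{thm:smoothness-leaves}). Since $\Qc$ is built out of the forward flag, and the forward flag varies Hölder-continuously along stable leaves (indeed with the good $\beta$-Hölder control of \cite[Theorem~$A$]{filip}), the Lyapunov norm $\lVert\cdot\rVert_{\Qc(\cdot)}$ and the derivative cocycle restricted to $\Qc$ at $g_s.x$ and at $g_s.z$ differ by a factor that is $1+O(\dist(g_s.x,g_s.z)^{\beta}) = 1 + O(e^{-\lambda_C\beta^2 s})$. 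Integrating this infinitesimal comparison over the time interval $[s,s+t]$ — using Lemma~\ref{lem:bound-over-derivative} to bound the difference of derivatives along the two orbit segments, together with the exponential decay of $\dist(g_\sigma.x,g_\sigma.z)$ for $\sigma\geq s$ — gives
\begin{equation*}
\left\lvert \log\frac{\varphi_{g_s.z}^{t}}{\varphi_{g_s.x}^{t}} \right\rvert \ll \int_{s}^{\infty} e^{-\lambda_C\beta^2\sigma}\,d\sigma \ll e^{-\lambda_C\beta^2 s},
\end{equation*}
uniformly in $t\geq 0$. Consequently $\lvert\log\psi^{t'} - \log\psi^{t}\rvert \ll e^{-\lambda_C\beta^2 t}$ for all $t'\geq t$, so $\log\psi^t$ is Cauchy and $L(x,z)=\lim_{t\to\infty}\psi^t$ exists (and is finite and nonzero, since the limit of the logarithm is finite).

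The main obstacle will be making the "integrate the infinitesimal comparison" step rigorous: one needs that the relevant orbit segments $\{g_\sigma.x\}$ and $\{g_\sigma.z\}$ actually enter the Oseledets-good set often enough that the Lyapunov-norm comparison (Lemma~\ref{lem:norm-comp}) and the transversality of the Oseledets subspaces are available at the relevant times, and that the Hölder constant from \cite[Theorem~$A$]{filip} is uniform over the compact good set. Here I would use that $x$ (hence $z$, being stably related) was chosen in a good set so that by Birkhoff a positive-density subset of times $\sigma$ is good, and that the exponential decay of $\dist(g_\sigma.x,g_\sigma.z)$ swamps the subexponential fluctuations of the Lyapunov norms; a standard telescoping over good return times then yields the claimed bound. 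The case where $x,z$ are only center-stably related is handled identically, replacing $\lambda_C$ by the (still strictly positive) contraction rate of $g_t$ transverse to the center inside $W^{c-s}$, since the $E^0$-direction is quotiented out in the definition of $\Qc$.
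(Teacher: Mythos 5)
Your proposal is correct and follows essentially the same strategy as the paper: use the cocycle property to telescope $\log(\varphi_z^t/\varphi_x^t)$ into increments over unit (resp.\ infinitesimal) time steps, bound each increment by Hölder regularity of the norm growth combined with the exponential decay of $\dist(g_s.x,g_s.z)$ along the stable leaf, and conclude summability/integrability of the increments, hence Cauchy convergence of $\log(\varphi_z^t/\varphi_x^t)$. The one difference worth flagging is that the paper obtains the increment bound directly from Hölder continuity of the one-step norm ratio $\varphi^1_{\,\cdot}$ on the stable leaf together with a uniform lower bound on $\varphi^1_{g_T.x}$, whereas you route through the heavier flag-regularity apparatus (Corollary~\ref{cor:Q-close}, Ruelle's Theorem~\ref{thm:smoothness-leaves}, \cite{filip}, and Lemma~\ref{lem:bound-over-derivative}) and a Birkhoff-recurrence patch that the paper does not need; also, the exponential bound on $\dist(g_s.x,g_s.z)$ you attribute to Corollary~\ref{cor:Q-close} is really just the basic stable-manifold contraction (Corollary~\ref{cor:Q-close} bounds the distance between the \emph{flags}, not the points), so that citation should be adjusted.
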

We note that $L(x,z)$ is a number, but we think of this number as a member of $Hom(\Qc(x),\Qc(z))\simeq GL_{1}(\mathbb{R})$.

\begin{proof}
We have the following observation
$$ \lim_{T\to \infty} \frac{\phi_{g_{T}.z}^{1}}{\phi_{g_{T}.x}^{1}} = 1,$$
as $dist(g_{T}.x,g_{T}.z) \to 0$ and the function is continuous over the stable manifold.
Furthermore, as the function is Holder continuous we have that
$$ $$
$$ \left\lvert \frac{\phi_{g_{T}.z}^{1}}{\phi_{g_{T}.x}^{1}} -1 \right\rvert \leq \frac{C\cdot dist(g_{T}.x,g_{T}.z)^{\alpha}}{\phi_{g_{T}.x}^{1}} \ll dist(g_{T}.x,g_{T}.z)^{\alpha} ,$$
where $C,\alpha$ are uniform over $W^{s}(x)=W^{s}(z)$ and $\min \phi^{1}_{g_{T}.x}$ is bounded away from zero over the orbit $g_{\star}.x \subset g_{\star}.W^{uu}(q_{1/2})$.

We will consider $\log( \phi_{z}^{t}/\phi_{x}^{t})$. Fix some $t_{1}$ large and let $t_{2}=t_{1}+1$.
We have by the cocycle property that
\begin{equation}
    \begin{split}
       \left\lvert \log( \phi_{z}^{t_2}/\phi_{x}^{t_2}) - \log( \phi_{z}^{t_1}/\phi_{x}^{t_1})  \right\rvert &= \left\lvert \log( \phi_{g_{t_{1}.z}}^{t_2-t_1}) - \log( \phi_{g_{t_{1}}.x}^{t_2-t_1})  \right\rvert \\
&= \log\left(\frac{\phi_{g_{t_{1}.z}}^{t_2-t_1}}{\phi_{g_{t_{1}}.x}^{t_2-t_1}}\right) \\
&\ll dist(g_{t_{1}}.x,g_{t_{1}}.z)^{\alpha} \\
&\ll e^{-\lambda_{C}\cdot t_{1}\cdot\alpha}. 
    \end{split}
\end{equation}
So we conclude that $\sum_{t_{1}}\log( \phi_{z}^{t_{1}+1}/\phi_{x}^{t_{1}+1})-\log( \phi_{z}^{t}/\phi_{x}^{t})$ is summable, hence we have that $\lim_{t_{1}\to \infty} \log( \phi_{z}^{t}/\phi_{x}^{t})$ exists, and so is $L(z,x)$.
\end{proof}
The map $L(x,z)$ is defined as a map from $\Qc(x)$ to $\Qc(z)$, but we are interested in a map from $\Rc(z)$ to $\Rc(x)$.
Using the map $I$ we can define a map from $\Rc(z)$ to $\Rc(x)$ by $I_{x}^{-1}\circ L^{-1}(x,z) \circ I_{z}$. 
Moreover, we still need to handle the situation where $x,z$ are only flow-stably related. In this case, we have that $x_1=g_s . x$ for some $s\ll dist(q_{1/2},q'_{1/2})$ is stably related to $z$, hence $L(x_1,z)$ is well defined. Moreover, we have that $\mathcal{Q}(g_{s}.x)=\mathcal{Q}(x)$.

\subsection{Simplified construction of $A(q,u,\ell,t)$ and $V(q)$}
For that subsection we assume we may calculate all the required data at the points $q,q',q_{1/2},q'_{1/2},x,z$.
The calculation of the data at $q'_{1/2}$ and $z$ is dependent (at-least partially, on the $\Diamond$ denoted parts) on the existence of holonomies which will be discussed on the following subsection.

We may define $A$ as follows - 
given $q_{1},\ell,u$ one calculate $q_{1/2}$ by 
$$ q_{1/2}=g_{-\ell/2}.q_{1} $$
then one sets $x=u.q_{1/2}$.
Now one defines $V^{s-exp}$ as the vector bundle which captures all the Taylor coefficients in the expansion ~\eqref{eq:stable-expansion} of the stable manifold up to the required error at $x$, including the location of $x$ (in terms of some global embedding of $M$ into $\mathbb{R}^{n}$ say). We note that the dimension of this vector bundle (which is related to the number of derivative) can be computed a-priori and beforehand.
Furthermore, for $q'$ one may construct a similar vector bundle $V^{\text{u-exp}}.$
Given $u, q_{1/2}, q_{1/2}'$ and the data in $V^{\text{s-exp}}(u.q_{1/2}), V^{\text{u-exp}}(q_{1/2}')$, one may recover $z$ (up to the prescribed error). We define a function $$z=z(V^{\text{s-exp}}(u.q_{1/2}), V^{\text{u-exp}}(q_{1/2}'))$$ taking values in $W^{u}(q_{1/2}')$.
As $q_{1/2}, q_{1/2}'$ are related to $q,q'$ by the flow depending on the value of $\ell$, we may also write
$$z=z(u,\ell, V^{\text{s-exp}}(q), V^{\text{u-exp}}(q')).$$
We define $F_{q}(q')$ to be the assignment from $q'\in W^{s}(q)$ to \\ $V^{\text{s-exp}}(q)\oplus~V^{\text{u-exp}}(q')$.
So we actually have $$z=~z(u,\ell,F_{q}(q')).$$
This provides us with a map
\begin{equation*}
    \mathbf{A}(q_1,u):V(q_{1/2}) \to \Rc(u.q_{1/2}).
\end{equation*}
Now by precomposing with $g_{\ell/2}$, we may extend this map to be
\begin{equation*}
    \mathbf{A}(q_{1},u,\ell):V(q) \to \Rc(u.q_{1/2}),
\end{equation*}
which is evaluated over a section $F_{q}:W^{s}(q')\to V(q)$ containing the needed data from $q',q'_{1/2}$.
We note that at this stage, one needs to know only the base points $q,q'$, the time $\ell$ and the point $u.q_{1/2}$ in order to calculate all the approximations needed, where the computations themselves are done in the half-way points.

We eventually will extend this map for any $t\geq 0$ by:
\begin{equation}
    A(q_1,u,\ell,t).F_{q}(q') = g_{t+\ell/2}.\mathbf{A}(q_{1},u,\ell).F_{q}(q') \in \Rc(q_{3}).
\end{equation}
This equation measures the relevant Hausdorff distance, as by considering normal forms coordinates, $\Rc$ measures the distance away from $W^{uu}_{\text{loc}}$.

\subsection{Approximation of holonomies}\label{sub:approx-holonomies}
The following theorem of A. Brown, A. Eskin ,S. Filip and F. Rodriguez-Hertz, based on construction of cocycle normal forms, allows one embed bundle which are smooth along stables inside bundles which admits smooth stable holonomies.
\begin{thm}[Constructive approximation of holonomies~\cite{brown_eskin_filip}]\label{thm:constructive-holonomies}
Let $V$ be a $g_{t}$-equivariant vector bundle over $M$ which is smooth along stables.
There exists another $g_{t}$-equivariant vector bundle $V_{\text{hol}}$ over $M$ such that
\begin{enumerate}
    \item $V_{\text{hol}}$ \emph{admits holonomies} in the following sense - for any generic $x,y \in M$ which are of bounded distance, there exists a linear map $H(x,y):~V_{\text{hol}}(x)\to~V_{\text{hol}}(y)$ such that
    \begin{equation*}
        V_{\text{hol}}(g_{t}.y) = H(g_{t}.x,g_{t}.y) \circ (g_{t}^{x})_{\star} \circ H(y,x). V_{\text{hol}}(y).
    \end{equation*}
    \item For any generic point $x\in M$ there exists an \emph{injective affine map} $j_{x}:V(x) \to V_{\text{hol}}(x)$ which changes smoothly along $y\in W^{s}(x)$.
\end{enumerate}
\end{thm}

This theorem allows one to translate the set of derivatives in $q'$ to the set of derivatives in $q'_{1/2}$ needed in order to calculate the Taylor expansion in~\eqref{eq:unstable-exp}.

Moreover, as $q'$ is Oseledets good point, $\tilde{\Rc}(q')$ is well-defined and so using the map, one may recover $\tilde{\Rc}(q'_{1/2})$ as needed in~\eqref{eq:R-definition}.

Furthermore, in order to calculate $\tilde{\Rc}(z)$, one needs to calculate 
$T^{u}_{q'_{1/2}}(z)=\tilde{\Rc}(q'_{1/2})$.
As $\tilde{\Rc}(q'_{1/2})$ changes smoothly over $W^{u}(q'_{1/2})$ (using Ruelle's theorem~\ref{thm:smoothness-leaves}), we may expand $\tilde{\Rc}(q'_{1/2})$ by a Taylor expansion to a sufficient high degree in order to approximate $\tilde{\Rc}(z)$ up to a sufficiently small error (the error will be a power of $dist(z,q'_{1/2})\approx dist(q_{1/2},q'_{1/2})$ to some power, chosen such that the error will be small enough so when get expanded by $e^{\lambda_{1}\cdot\alpha\cdot\ell}$ the total error would still be bounded by $e^{-\beta \cdot \ell}$).

We note that the polynomial function one deduces from Theorem~\ref{thm:constructive-holonomies} does not interfere with the described proof, as by using composition, one still recover the appropriate Taylor approximations at the various stages, albeit maybe of higher degrees, hence one just need to enlarge the vector bundle $V(q)$ in order to account for higher degree derivatives, but the maximal degree can be in principle bounded a-priori before the starting of the computation.

The final construction of the function $F_{q}:W^{s}(q) \to V(q)$ goes as follows:
Given a point $q'\in W^{s}(q)$, we will approximate a ``generalized holonomy'' defined as $P^{-}(q_{1/2},q'_{1/2})$ (c.f. \eqref{eq:P-def}) and apply it to a suitable bases of some subspaces of vector bundles at $q_{1/2}$ in order to calculate the required data in $q'_{1/2}$.
Using the bases at $q_{1/2}$, chosen to be orthonormal, the resulted ``approximate basis'' at $q'_{1/2}$ is ``almost-orthonormal'' by the properties of the $P^{-}$ map.

In order to approximate the operator $P^{-}$ itself, one need to approximate the ``translating map'' $i^{-1}_{q'_{1/2}}$ (c.f. \ref{prop:i-factorization}).
Then one considers this map as a vector in a bundle consisting of linear maps between two bundles, this bundle is smooth along stables, using Theorem~\ref{thm:constructive-holonomies} we may approximate it.

Given an Oseledets regular point $q'\in W^{s}(q)$, we calculate all the required data in order to approximate the operator $P^{-}(q_{1/2},q'_{1/2})$.
This would result in a section $F_{q}(q')$.

Detailed proofs of the various constructions described in this section appear in Appendix~\ref{app:factorization-details}.

\section{Bilipschitz estimates}\label{sec:bilip}
We consider the maps $A(q_1,u,\ell,t)$ which were constructed in the previous section. Notice that $A(q_1,u,\ell,t):V(q) \to \mathcal{R}(g_{t}.u.q_{1})$.
We restrict $A$ to the subspace spanned by $F_{q}(q')$. Note that as $F_{q}(q')\in~ V(q)$,  this subspace is one-dimensional subspace of $V(q)$.
We endow $V(\star), \Rc(\star)$ with the Lyapunov norms defined in subsection~\ref{sub:lyap-norm}.
This give rise to a cocycle over $\mathcal{R}(\star)$ defined as
\begin{equation}\label{eq:lambda-2-def}
    \lVert g_{t}. v \rVert_{\mathcal{R}(x)} = e^{\lambda_{1}(x,t)}\cdot \lVert v \rVert_{\mathcal{R}(x)}.
\end{equation}
We define 
\begin{equation}\label{eq:frak-A-defn}
    \mathfrak{A}(q_1,u,\ell,t) = \lVert A(q_1,u,\ell,t) \rVert,
\end{equation} where the norm is the operator norms between the two spaces equipped with the Lyapunov norms.

\begin{defn}
We define for almost all $q_1\in M$, $u.q_1\in W^{uu}_{\text{loc}}(q)$ and any $\ell>0$ the function $\tau_{1,(\epsilon)}(q_1,u,\ell)$ as follows
\begin{equation}\label{eq:stopping-time-def}
    \tau_{1,(\epsilon)}(q_1,u,\ell) = \sup\left\{ t\geq 0 \mid \mathfrak{A}(q_1,u,\ell,t)\leq\epsilon \right\}.
\end{equation}
In words, $\tau_{1,(\epsilon)}$ measures the time $t$ for which the value of $A(q_1,u,\ell,t)F_{q}(q')$ reaches to size $\varepsilon$.
\end{defn}
Note that by the factorization theorem, $A(q_1,u,\ell,t)F_{q}(q')$ approximates to a high degree the value of $hd_{g_{t}.u.q_{1}}(W^{uu}(g_{t}.u.q_{1}),W^{uu}(g_{t}.q'_{1}))$ hence one should think of $\tau_{1,(\varepsilon)}$ as the time this distance grows to length $\varepsilon$ (c.f. Corollary~\ref{cor:approximation-of-stopping-time}).
With an appropriate choice of $F_{q}(q')$, one is able to compare $A(q_{1},u,\ell,t)F_{q}(q')$ with the operator norm $\left\lVert A(q_{1},u,\ell,t) \right\rVert$.

\begin{lem}[Bilipschitz estimate]\label{lem:bilip-estimate}
For almost all $q_{1}\in M$, $u.q_{1}\in~W^{uu}_{\text{loc}}(q_{1})$ any $\ell\gg 0, s>0$ we get
\begin{equation*}
    \tau_{1,(\epsilon)}(q_1,u,\ell) + \kappa_{1}\cdot s \leq \tau_{1,(\epsilon)}(q_1,u,\ell+s) \leq \tau_{1,(\epsilon)}(q_1,u,\ell)+\kappa_{2}\cdot s
\end{equation*}
where $\kappa_{1},\kappa_{2}$ are related to the Lyapunov spectrum of $g_{t}$ on $M$ and the induced cocycle on the vector bundle $V$ and the constants appearing in Lemma~\ref{lem:norm-comp}.
\end{lem}
The proof follows the spirit of the proof of Eskin-Lindenstrauss and Eskin-Mirzakhani, with one modification over the contracted part to handle the fact that the domain of the operator $A(q,u,\ell,t)$ is the vector bundle $V$, we produce here for the sake of completeness.

We start with an observation.
\begin{obs}\label{obs:restricted-cocycle-lyapunov}
Consider $\mathcal{L}(q)=\text{span} \left\{F_{q}(\mu^{s}_{q})\right\} \leq V(q)$ as a $g_{t}$-invariant subspace, we have a cocycle
\begin{equation}\label{eq:restricted-cocycle-def}
\lVert (g_{s}).F_{q}(q') \rVert_{V(g_{s}.q)} = e^{\lambda_{V\mid_{F}}(q,s)}\cdot \lVert F_{q}(q') \rVert_{V(q)},    
\end{equation}
for almost every $q'\in W^{s}_{\text{loc}}(q)$.
Then for $s>0$ we have \\ $\lambda_{V\mid_{F}}(q,s) \leq~-\kappa_{V\mid_{F}} \cdot~s$ for some $\kappa_{V\mid_{F}}>0$.
\end{obs}
The proof of this Observation follows from the contraction properties of the induced $g_{t}$-action on the auxiliary subspace constructed during the proof of factorization, see Lemma~\ref{lem:contracting-action} in the appendix.

\begin{lem}\label{lem:factorization-approx-after-s-change}
For any $s>0$, we have 
\begin{equation*}
    \left\lVert A(q_{1},u,\ell+s,t) - A(q_{1},u,\ell,t).g_{-\ell+s}\right\rVert \ll e^{-\alpha\cdot \ell},
\end{equation*}
where $\alpha$ is as in Theorem~\ref{thm:factorization}.
\end{lem}
The proof of the Lemma above follows as both expressions \emph{factorize} the same distance, namely both of them give $O(e^{-\alpha\cdot \ell})$ approximation to $hd_{g_{t}.q_{1}}(W^{uu}_{\text{loc}}(g_{t}.u.q_1), W^{uu}_{\text{loc}}(g_{t}.q'))$.

\begin{proof}[Proof of Lemma~\ref{lem:bilip-estimate}]
We have the following representation for $A(q_1,u,\ell,t)$:
\begin{equation}
\begin{split}
    A(q_1,u,\ell+s,t+\sigma) &= \left(g^{g_{t}.u.q_{1}}_{\sigma}\right).A\left(q_1,u,\ell+s,t\right).
\end{split}
\end{equation}
As we endowed $\Rc(\star)$ with the Lyapunov norm, we have that
\begin{equation*}
    \lVert \left(g^{g_{t}.u.q_{1}}_{\sigma}\right).v \rVert_{\mathcal{R}(g_{t}.u.q_{1})} =e^{\lambda_{1}(g_{t}.u.q_{1},\sigma)} \cdot \lVert v \rVert_{\mathcal{R}(g_{t}.u.q_{1})},
\end{equation*}
for any $v\in \Rc$.
Furthermore, as we endowed $V$ with the Lyapunov norms as well, by the observation from before
\begin{equation*}
     \left\lVert \left(g_{s}^{g_{-(\ell+s)}.q_1}\right).v \right\rVert_{V(g_{-s}.q)} = e^{\lambda_{V\mid_{F}}(g_{-(\ell+s)}.q_1,s)} \cdot \lVert v \rVert_{V(g_{-(\ell+s).q_1})}
\end{equation*} 
for the cocycle $\lambda_{V\mid_{F}}$ defined in~\eqref{eq:restricted-cocycle-def} for all $v\in V(g_{-(\ell+s)}.q_{1})$.

So in general one may deduce that
\begin{equation*}
   \mathfrak{A}(q_1,u,\ell+s,t+\sigma) \leq e^{\lambda_{1}(g_{t}.u.q_{1},\sigma)}\cdot \mathfrak{A}(q_1,u,\ell,t+s).
 \end{equation*}
  
In view of Lemma~\ref{lem:factorization-approx-after-s-change} we get
\begin{equation*}
   \mathfrak{A}(q_1,u,\ell+s,t+\sigma) \leq e^{\lambda_{1}(g_{t}.u.q_{1},\sigma)}\cdot\left( \mathfrak{A}(q_1,u,\ell,t)\cdot e^{\lambda_{V}(g_{-(\ell+s)}.q_1,s)} + e^{-\alpha\cdot \ell} \right).
 \end{equation*}
 
Choose $t$ such that $\tau_{1,(\varepsilon)}(q_{1},u,\ell)=t$ we have that 
\begin{equation}\label{eq:lower-bound-bilip}
    \mathfrak{A}(q_1,u,\ell+s,t+\sigma) \leq \varepsilon \cdot e^{\lambda_{1}(g_{t}.u.q_{1},\sigma)+\lambda_{V}(g_{-(\ell+s)}.q_1,s)}+e^{\lambda_{1}(g_{t}.u.q_{1},\sigma)-\alpha\cdot \ell}.
\end{equation}
We note that in the proof of the main theorem, we will take $\ell\to~\infty$. As a result, $t$ will go to infinity.
When $\ell\to\infty$ the term $e^{\lambda_{1}(g_{t}.u.q_{1},\sigma)-\alpha\cdot\ell}$ decays to zero exponentially in $\ell$.
Now fix $\sigma$ such that $\tau_{1,(\varepsilon)}(q_{1},u,\ell+s)=t+\sigma$.
If so, in view of~\eqref{eq:lower-bound-bilip}, as $\ell\to\infty$ we must have
\begin{equation*}
    \lambda_{1}(g_{t}.u.q_{1},\sigma)+\lambda_{V\mid _{F}}(g_{-(\ell+s)}.q_{1},s) \geq 0.
\end{equation*}

As we assume that $g_{t}.u.q_{1}$ and $q_{1}$ in a good Oseledets set, by the bound over the growth of the Lyapunov norms and the observation from above
\begin{equation*}
    \lambda_{1}(g_{t}.u.q_{1},\sigma) \leq \kappa_{2}\cdot\sigma, \  \lambda_{V\mid_{F}}(g_{-(\ell+s)}.q_{1},s) \leq -\kappa_{V\mid_{F}}\cdot s.
\end{equation*}

Hence we deduce that
\begin{equation*}
    \kappa_{2}\cdot \sigma -\kappa_{V\mid_{F}}\cdot s \geq 0
\end{equation*}
or equivalently
\begin{equation*}
\begin{split}
    \tau_{1,(\varepsilon)}(q_{1},u,\ell+s) - \tau_{1,(\varepsilon)}(q_{1},u,\ell) &= \sigma \\ 
    &\geq \kappa_{V\mid_{F}}\cdot\kappa_{2}^{-1}\cdot s.
\end{split}
\end{equation*}

For the upper bound, we have the following representation
\begin{equation}
\begin{split}
    A(q_1,u,\ell,t) &= \left(g_{-\sigma}^{g_{t+\sigma}.u.q_{1}}\right). A\left(q_1,u,\ell,t+\sigma\right).
\end{split}
\end{equation}
Hence we get the inequality about the norms using the Lyapunov norms
\begin{equation}\begin{split}
        \mathfrak{A}(q_1,u,\ell,t) \leq \lVert \left(g_{-\sigma}^{g_{t+\sigma}.u.q_{1}}\right) \rVert_{\mathcal{R}(g_{t+\sigma}.u.q_{1})} \cdot  \mathfrak{A}\left(q_1,u,\ell+s,t+\sigma\right).
    \end{split}
\end{equation}
Again, using Lemma~\ref{lem:factorization-approx-after-s-change} allows us to change $\mathfrak{A}(q_{1},u,\ell+s,t+\sigma)$ to $\mathfrak{A}(q_{1},u,\ell,t+\sigma)$ at the cost of $e^{-\alpha\cdot \ell}$ as before.
This yields the equation
\begin{equation*}
    \varepsilon \leq e^{\lambda_{1}(g_{t+\sigma}.u.q_{1})}\cdot \left( \mathfrak{A}(q_{1},u,\ell,t+\sigma)+e^{-\alpha\cdot \ell}\right).
\end{equation*}
Choosing again $t,\sigma$ as before and utilizing the Lyapunov norms we are led to the inequality
\begin{equation*}
    \lambda_{1}(g_{t+\sigma}.u.q_{1},-\sigma) + \lambda_{V\mid F}(g_{-\ell}.q_{1},-s) \geq 0.
\end{equation*}
Using the growth bounds estimates for the Lyapunov cocycles we get
\begin{equation*}
    -\kappa_{2}\cdot \sigma + \kappa_{V\mid F}\cdot s \geq 0,
\end{equation*}
or equivalently
\begin{equation*}
    \sigma \leq \kappa_{V\mid F}\cdot \kappa_{2}^{-1}\cdot s.
\end{equation*}
\end{proof}

\section{The Eskin-Mirzakhani scheme}\label{sec:8-pts}
We recall the following Definition~\ref{def:wasserstein}:
\begin{defn*}
We define the (normalized) \emph{Wasserstein metric} $d_{W}$ between two conditional measures (of bounded support) as
\begin{equation*}
    d_{W}(\mu_{1},\mu_{2})=\sup_{h:M\to\R \text{ is Lipschitz with }Lip(h)\leq 1 }\left\{\left\lvert\int_{M}h(x)\left(\frac{d\mu_{1}(x)}{\mu_{1}(M)}-\frac{d\mu_{2}(x)}{\mu_{2}(M)}\right)\right\rvert\right\}.
\end{equation*}
\end{defn*}

We will use this distance in order to measure the distance between conditional measures $\mathbf{f}_1$ as defined in \S\ref{sec:conditional-measures}.

The main result of this section is the following proposition
\begin{prop}\label{prop:main}
For some small $\delta_{0}< 1$, there exists a compact subset $\mathcal{M}\subset M$ with $\mu(\mathcal{M})>1-\delta_0$ such that $\mathbf{f_{1}}$ is uniformly continuous over $\mathcal{M}$ and some $C=C(\mathcal{M},\delta)>1$ such that for every $\varepsilon>0$ there exists a subset $\mathcal{M}'\subset\mathcal{M}$ with $\mu(\mathcal{M}')>\delta_0$ such that for every $x\in \mathcal{M}'$ there exists some $y\in \mathcal{M'}\cap W^{u}(x)$ such that $$C^{-1}\cdot\varepsilon \leq hd_{x}(W^{uu}_{\text{loc}}(x),W^{uu}_{\text{loc}}(y))\leq C\cdot \varepsilon$$ and  such that
\begin{equation*}
    d_{W}(\mathbf{f}_{1}(x), \mathbf{f}_{1}(y)) \ll \varepsilon,
\end{equation*}
where $hd_{x}$ is the local Hausdorff distance at $x$ defined in Definition~\ref{defn:local-haus-distance} and $d_{W}$ is the Wasserstein distance as in Definition~\ref{def:wasserstein}.
\end{prop}

We start with the following definition.
\begin{defn}
A \emph{$Y$-configuration} of points $q$, $q_{1}=g_{\ell}.q$, $u.q_{1}$, \\  $q_{2}=~g_{\tau_{1,(\varepsilon)}(q_{1},u,\ell)}.(u.q_{1})$, $q_{3}=g_{t_{1}}.q_{1}$ depending on parameters $q, u,\ell$ is a set of points such that all the points belong to some Oseledets' good set which admits a good splitting and moreover
\begin{equation*}
    \lambda_{1}(q_{1},t_{1}) = \lambda_{1}(u.q_{1}, \tau_{1,(\varepsilon)}(q_{1},u,\ell)),
\end{equation*}
where $t_{1},\tau_{1,(\varepsilon)}$ are the quantities defined in~\eqref{eq:def-of-t_2} and~\eqref{eq:stopping-time-def} respectively.
\end{defn}
\texttt{Standing assumption - our $Y$ configurations will be always chosen in a way where the point $q_{1/2}$ is also Oseledets good point and $q_{1/2}, g_{-\ell/2}.u.q_{1}$ are point in $\mathcal{P}$, namely satisify the QNI condition}.

In order to apply the Eskin-Mirzakhani scheme we will need to generate sets of points in two $Y$-configurations which will be synchronized in the sense that all lengths of the corresponding legs are the same, as can be seen in Figure~\ref{fig:8-pts}.
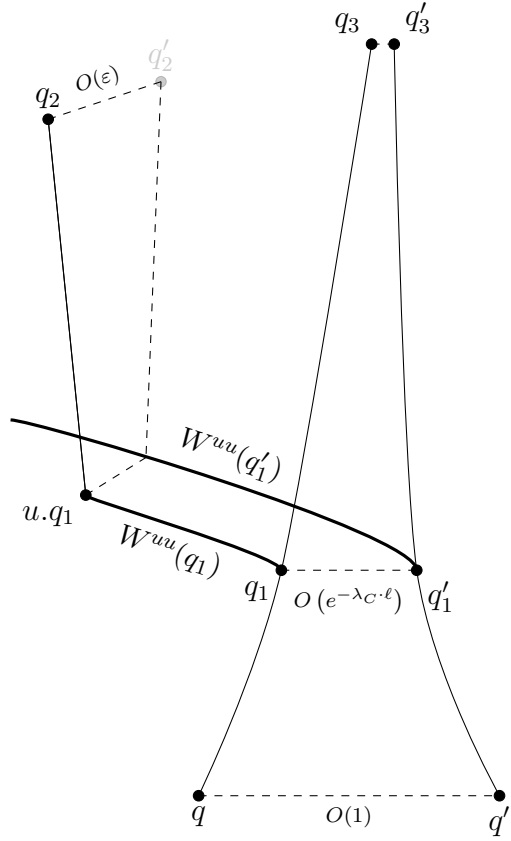
\begin{figure}[H]
    \centering
    \scalebox{1}{
\begin{tikzpicture}

\coordinate (q) at (-2,0); 
\coordinate (q_1) at (-0.9,3);
\coordinate (q_3) at (0.3,10); 
\coordinate (q') at (2,0);
\coordinate (q'_1) at (0.9,3);
\coordinate (q'_3) at (0.6,10);
\coordinate (uq_1) at (-3.5,4);
\coordinate (q_2) at (-4,9);
\coordinate (q'_2) at (-2.5,9.5);
\coordinate (uq'_1) at (-4.5,5);
\coordinate (z) at (-2.7,4.5);


\draw plot [smooth] coordinates {(q) (q_1) (q_3)};
\draw plot [smooth] coordinates {(q') (q'_1) (q'_3)};
\draw plot [smooth] coordinates{(uq_1) (q_2)};
\draw (uq_1) -- (q_2);

\draw[very thick] (q_1) .. controls +(up:0.2cm) and +(left:0.2cm) .. (uq_1) node[pos=0.5, below, sloped]{\smaller $W^{uu}(q_{1})$};
\draw[very thick] (q'_1) .. controls +(up:0.5cm) and +(right:0.3cm)  .. (uq'_1) node[pos=0.5, above, sloped]{\smaller $W^{uu}(q'_{1})$};


\draw[dashed] (q) -- (q') node[pos=0.5, below]{\tiny $O(1)$};
\draw[dashed] (q_1) -- (q'_1) node[pos=0.5, below=0.1 cm]{\tiny $O\left(e^{-\lambda_{C}\cdot\ell}\right)$};
\draw[dashed] (q_3) -- (q'_3);
\draw[dashed] (uq_1) -- (z);
\draw[dashed] (q_2) -- (q'_2) node[pos=0.5,above, sloped]{\tiny $O(\varepsilon)$};
\draw[dashed] (q'_2) -- (z);

\filldraw (q) circle (2pt) node[below]{$q$};
\filldraw (q') circle (2pt) node[below]{$q'$};
\filldraw (q_1) circle (2pt) node[below left]{$q_1$};
\filldraw (q'_1) circle (2pt) node[below right]{$q'_1$};
\filldraw (q_3) circle (2pt) node[above left]{$q_3$};
\filldraw (q'_3) circle (2pt) node[above right]{$q'_3$};
\filldraw (uq_1) circle (2pt) node[below left]{$u.q_1$};
\filldraw (q_2) circle (2pt) node[above]{$q_2$};
\filldraw[opacity=0.2] (q'_2) circle (2pt) node[above, opacity=.2,text opacity=0.5]{$q'_2$};
\end{tikzpicture}
    \caption{Illustration of the points chosen in \S\ref{sec:8-pts}}
    \label{fig:8-pts}
\end{figure}

\subsubsection*{Organization of the choices}
We start by setting up two different subsets - $M_{base}$ and $M_{\text{rec}}$.
$M_{base}$ is a set composed of points with controlled Osceledets splitting. This is the set for which we want our endpoint to belong.
$M_{\text{rec}}$ is a set of points which spend most of their time inside $M_{base}$ in a highly controlled fashion.
As we want to control synchronization between the $Y$-configurations, we will construct the points $q,q',q_1,q'_1,q_2,q'_2,q'_3$ in the recurrent set $M_{\text{rec}}$ and then remedy the situation on \S\ref{sub:coupled}.
The proof consists of three parts.
The proof begins by considering, for a given $q_{1}, u.q_{1}$ the sets $E$ for which there exists a time in the past, $\ell$ such that $q,q_1,q_2,q_3$ all belong the the recurrent set.
Using the bilipschitz estimates and Fubini argument, one can prove that there exists a universal set of times $D_{good}$ of large density and a set of points $M_{\text{good start}}$ of large measure such that for any $\ell\in D_{good}$, $q_1\in M_{\text{good start}}(\ell)$ and many $u.q_{1}\in W^{uu}_{\text{loc}}(q_{1})\cap B_{1}(q_1)$, the associated $Y$ configuration lands in $M_{\text{rec}}$, this is done in~\S\ref{sub:existence-y-config}.
In the second part, one needs to synchronize the two different $Y$-configurations which we get.
By considering the relative divergence of the curves, one may conclude that the two different stopping times $\tau_{1,(\varepsilon)}(q_{1},u,\ell)$ and $\tau_{1,(\varepsilon)}(q'_{1},u',\ell)$, one shows that they are only differ by a bounded constant as in Proposition~\ref{prop:match-times}.
Then one may use the recurrence property to correct the times. This is done in~\S\ref{sub:coupled}.
Then one can use a strategy similar to the one used by Benoist-Quint~\cite{B-Q} in order to show that by letting $\varepsilon$ go to $0$, one can indeed conclude extra invariance, this is done in~\S\ref{sub:endgame}.

\subsection*{Initial choices}
Fix some $\varepsilon>0$.
Let $\delta$ be an arbitrarily small constant.
Let $\mathfrak{P}$ denote the points satisfying QNI for some $\alpha$, by our assumptions $\mu(\mathfrak{P})>0$. 
We are assuming that $\delta$ is smaller than $\mu(\mathfrak{P})$.

By Lusin's theorem, there exists a compact subset $M_{\text{uni}}$ of measure $\mu(M_{\text{uni}})>1-\delta$ such that $f_{1}$ is uniformly-continuous over $M_{\text{uni}}$.
Fix some $\varepsilon'>0$ depending only on the Lyapunov spectrum.
Using Osceledets theorem, we may find a time  $T'=T'(\delta)>0$ and a set of Oseledets regular points $M_{\text{Os-reg},\varepsilon',T'}$ of measure greater than $1-\delta$.
We define $M_{base}=~M_{\text{uni}}~\cap~ M_{\text{Os-reg},\varepsilon',T'}$.
We will define an additional subset $M_{\text{rec}}$ as a set of measure greater than $1-\delta$ such that there exists some $T''(\delta)>0$ such that for all $T>T''(\delta)$ and $x\in M_{\text{rec}}$
\begin{equation}\label{eq:rec-density}
    \lvert \left\{ t\in [0,T] \mid g_{t}.x \in M_{base} \right\}\rvert \geq 0.99T.
\end{equation}

While we are going to show that there exist two good $Y$-configurations composed of the points in $M_{base}$, in order to couple them as in~\S~\ref{sub:coupled}, one needs to modify the side lengths a bit after the points were chosen (the issue stems from Proposition~\ref{prop:match-times} and appears in~\eqref{eq:distortion-q'_2},\eqref{eq:distortion-q'_3}, relating the distortions of the sides of the tagged $Y$-configuration). The definition of $M_{\text{rec}}$ comes to the rescue here as the uniform recurrence rate ensures us that by a minor modification of the side lengths, one may correct both $Y$-configurations at once to be composed of points in $M_{base}$, as needed. This also shows that choice of any density strictly bigger than $1/2$ in~\eqref{eq:rec-density} would have worked.

In view of Lemma~\ref{lem:norm-comp}, without loss of generality we may assume that $M_{\text{rec}}$ is the set for which Lemma~\ref{lem:norm-comp} is applicable.

Given $q_{1}, u.q_{1}$ in an Osceledets' good set, we define $t_{1}=t_{1}(q_{1},u,t)$ to be the number which solves the equation
\begin{equation}\label{eq:def-of-t_2}
    \lambda_{1}(q_{1},t_{1}) = \lambda_{1}(u.q_{1},t),
\end{equation}
where $\lambda_{1}$ is the cocycle defined in~\eqref{eq:lambda-2-def}.
By the continuity property of the Lyapunov cocycle, $t_{1}$ is bilipschitz in $t$ for fixed $q_{1},u.q_{1}$.

Let $\tau_{1,(\varepsilon)}(q_{1},u,\ell)$ be the stopping time defined in~\eqref{eq:stopping-time-def}.
Define the sets
\begin{equation}\label{eq:E-2}
    E_{\text{good starting times, left branch}}(q_{1},u) = \left\{\ell \mid g_{\tau_{1,(\varepsilon)}(q_{1},u,\ell)}.u.q_{1} \in M_{\text{rec}} \right\},
\end{equation}
and
\begin{equation}\label{eq:E-3}\begin{split}
    &E_{\text{good starting times, both branches}}(q_{1},u.q_{1}) \\
    &\ = \left\{\ell \in E_{\text{good starting times, left branch}}(q_1,u) \mid g_{t_{1}}.q_{1} \in M_{\text{rec}} \right\}.
\end{split}
\end{equation}

The set $E_{\text{good starting times, both branches}}(q_1, u.q_1)$
 allows us to choose set of ``starting times'' (amounting to the initial choices of $q$) such that the points $q_{2},q_{3}$ both belong to the recurrent set $M_{\text{rec}}$.

 \subsection{Existence of $Y$-configurations}\label{sub:existence-y-config}
 \begin{prop}\label{prop:good-configuartion}
 There exists some time $\ell_{min}>0$ and a subset $M_{\text{good start}}\subset M_{\text{rec}}$ of measure greater than $1-c_{1}(\delta)$ for some $c_{1}(\delta)$ which goes to $0$ with $\delta$, such that for any $q_1\in M_{\text{good start}}$ we have a subset $Q_{\text{good stop}}(q_{1})\subset W^{uu}_{\text{loc}}(q_{1}) \cap B_{1}(q_{1})$ such that $$\mathsf{m}_{x}^{uu}(Q_{\text{good stop}}(q_{1}))>~1-~c_{2}(\delta),$$ for some $c_{2}(\delta)$ which goes to $0$ with $\delta$ and if $q_{1}\in M_{\text{good start}}$, \\ $ u.q_{1}\in~Q_{\text{good stop}}(q_{1})$ and any $\ell>\ell_{min}$ we have that
 \begin{equation*}
     \lvert E_{\text{good starting times, both branches}}(q_{1},u.q_{1}) \cap [0,\ell] \rvert \geq (1-c_{3}(\delta))\cdot\ell,
 \end{equation*}
 for some $c_{3}(\delta)$ which goes to $0$ as $\delta\to 0$.
 \end{prop}
This proposition assures us that by choosing points $q_{1}$ from an appropriate large set $M_{\text{good start}}$ and many points $u.q_{1} \in Q_{\text{good stop}}(q_{1})\subset~W^{uu}_{\text{loc}}(q_{1})$, for most choices of large $\ell$, we would have that all the points $q_{1}$,$u.q_{1}$,\\$ q_{2}=~g_{\tau_{1,(\varepsilon)}(q_{1},u,\ell)}.u.q_{1}$, $q_{3}=~g_{t_{1}(q_{1},u,\ell)}.q_{1}$ and $q=g_{-\ell}.q_{1}$ all belong to a good set.

\begin{proof}
By the ergodic theorem for any $\delta>0$ there exists some $\ell_{\text{recurrence}}\in~\mathbb{R}$ and a set $M_{\text{initial}}$ of measure greater than $1-\delta$ such that for any $x\in M_{\text{initial}}$ and any $L>\ell_{\text{recurrence}}$ we have that $$\frac{\lvert \left\{t\in [0,L] \mid  g_{t}.x \in M_{\text{rec}}\right\}\rvert}{L}~\geq~1-\delta.$$

Define $$Q_{\text{good stop}}(q) = W^{uu}_{\text{loc}}(q)\cap M_{\text{initial}}$$ and
consider the set
\begin{equation*}
    M_{\text{good start}} = \left\{q\in M \ \middle\vert \ \frac{\mathsf{m}^{uu}_{q}(Q_{\text{good stop}}(q)\cap B_{1}(q))}{\mathsf{m}^{uu}_{q}(W^{uu}_{\text{loc}}(q)\cap B_{1}(q))} > 1-\delta\right\}.
\end{equation*}
Suppose now that $q_{1} \in M_{\text{good start}}$ and $u.q_{1} \in Q_{\text{good stop}}(q_1) \subset M_{\text{initial}}$.
Define
$$E_{bad}~=~\left\{ t \mid g_{t}.u.q_{1} \notin M_{\text{rec}} \right\}.$$
As $u.q_{1} \in M_{\text{initial}}$, we have that the density of the set $E_{bad}$ is less or equal to $\delta$.
Note that by
\begin{equation*}
    E_{\text{good starting times, left branch}}(q_{1},u)^{c} = \left\{ \ell \mid g_{\tau_{1,(\varepsilon)}(q_{1},u,\ell)}.(u.q_{1}) \notin M_{\text{rec}} \right\}
\end{equation*}
Propagating the bad times from $E_{bad}$ to $E_{\text{good starting times, left branch}}(q_{1},u)^{c}$ by means of the bilipschitz estimates of Lemma~\ref{lem:bilip-estimate}, we see that the density of $\ell$'s corresponding to the bad times $t\in E_{bad}$ is bounded by $4\Theta\cdot\delta$ for large enough $\ell$'s (namely $\ell>\Theta\cdot\ell_{\text{recurrence}}$ so the density statement will remain valid).
Note that $\ell \mapsto t_{1}(q_{1},u,\tau_{1,(\varepsilon)}(q_{1},u,\ell))$ is $\Theta^2$ bilipschitz, as by Lemma~\ref{lem:bilip-estimate} $\tau_{1,(\varepsilon)}$ is bilipschitz in $\ell$ and $t_{1}$ is bilipschitz by its definition in~\eqref{eq:lambda-2-def} and the estimates regarding the cocycle growth using the Lyapunov norm in \S~\ref{sub:lyap-norm}, hence in a similar manner by propagating the times from $E_{\text{good starting times, left branch}}(q_{1},u)$ we can see that the density of $E_{\text{good starting times, both branches}}(q_{1},u)$ is larger than $1-4\Theta^{2}\cdot\delta$.
\end{proof}

In the previous proposition, the constructed subsets $M_{\text{good start}}$ and $Q_{\text{good stop}}(q_{1})$ were independent of $\ell$, but as a result, the set of times $E_{\text{good starting times, left branch}}(q_{1},u)$ were only of positive proportion.
The next corollary rectify the situation, makes the sets $M_\text{good start},Q_\text{good stop}$ dependent over $\ell$.

\begin{cor}\label{cor:plenty-of-Y-config}
There exists a set of times $D_{good}\subset\mathbb{R}$ and a number $\ell'>0$ such that for $\ell>\ell'$, the density of $D_{good}$ in $[0,\ell]$ is greater than $1-c_{4}(\delta)$ and for any given number $\ell\in D_{good}$ such that $\ell>~\ell'$, there exists a subset $M_{\text{good start}}(\ell)~\subset ~M_{\text{good start}}$ such that for any $q_{1}~\in~M_{\text{good start}}(\ell)$ there exists a subset \ $Q_{\text{good stop}}(q_{1},\ell)\subset ~W^{uu}_{\text{loc}}(q_1)$
 which satisfy
\begin{equation}\label{eq:good-stop-density}
    \frac{\mathsf{m}^{uu}_{q_1}(Q_{\text{good stop}}(q_{1},\ell))}{\mathsf{m}^{uu}_{q_1}(W_{\text{loc}}^{uu}(q_1))} > 1-c_{4}'(\delta)
\end{equation} such that for all $q_{1} \in M_{\text{good start}}(\ell)$, $u.q_{1} \in Q_{\text{good stop}}(q_{1},\ell)$ we have
\begin{equation*}
    \ell \in E_{\text{good starting times, both branches}}(q_{1},u).
\end{equation*}
\end{cor}

\begin{proof}
Using Fubini's theorem
\begin{multline*}
   \int_{q\in M}\int_{u.q\in W_{\text{loc}}^{uu}(q)}\int_{\ell\in \mathbb{R}}\chi(q_{1},u.q,\ell) dLeb(\ell) d\mu^{uu}_{q}(u.q)d\mu(q) \\ =\int_{\ell\in\mathbb{R}}\int_{q\in M}\int_{u.q\in W^{uu}_{\text{loc}}(q)}\chi(q_{1},u.q,\ell) d\mu^{uu}_{q}(u.q)d\mu(q)dLeb(\ell),
\end{multline*}
where $\chi$ is a characteristic function for the set of points $(q,u.q,\ell)$ which are in suitable position.
\end{proof}

Furthermore, as $Q_{\text{good stop}}(q_{1},\ell)$ is of large density in $W^{uu}_{\text{loc}}(q_1)$, by proportionality of the conditional measures, we may assume that this set (assuming $q_{1/2}=g_{-\ell/2}.q_{1}$ belongs to a  set of points satisfying QNI) contains $g_{-\ell/2}.u.q_{1}$ which satisfies QNI.

\texttt{Standing assumption}
We will choose $q_{1}\in g_{\ell/2}.\mathfrak{P}$, namely $q_{1/2}\in\mathfrak{P}$ will be part from a dynamical quadrilateral satisfying QNI.

We are now ready to pick the ``bottom half'' of the configurations, depending on $q_{1},\ell$.
\subsection*{Choice of $q,q',q'_1$}
Define $\mathfrak{M}_{u}$ to be the subspace defined as in Lemma~\ref{lem:badsubspace} applied to the map $A(q_{1},u,\ell,\tau_{1,(\varepsilon)}(q_{1},u,\ell))$ inside $V(q)$.
Apply Lemma~\ref{lem:subspace-avoidance} with $M^{\dagger}=g_{-\ell}.M_{\text{good start}}(\ell)$ to get $M^{\dagger}_{\text{avoidance}}(\ell)$. The subset $M^{\dagger}_{\text{avoidance}}(\ell)$ comprises of points in $g_{-\ell}.M_{\text{good start}}(\ell)$ such that for all points \\ $p\in M^{\dagger}_{\text{avoidance}}(\ell)$ there exists a nearby point $p'\in W^{s}(p)$ which also belong to $M^{\dagger}(\ell)$.

By the Lemma, we have $\mu(M^{\dagger}_{\text{avoidance}}(\ell)) \geq 1-c(\delta)$.
Now define $$M_{\text{good start,avoidance}}(\ell)=g_{\ell}.M^{\dagger}_{\text{avoidance}}(\ell).$$
Suppose $\ell\in~D_{good}$ and $q_{1}\in M_{\text{good start, avoidance}}(\ell)$.
Choose
\begin{itemize}
    \item $q=g_{-\ell}.q_{1}.$
\end{itemize}
By the definition of $M_{\text{good start, avoidance}}(\ell)$ we may choose
\begin{itemize}
    \item $q'\in W^{s}(q)\cap g_{-\ell}.M_{\text{good start}}(\ell).$
\end{itemize}such that $\rho'(\delta)<~dist(q,q')\ll~1$ and $dist(F_{q}(q'),\mathfrak{M}_{u})>\rho(\delta)$ for most $u\in~Q_{\text{good start}}(q_{1},\ell)$.
In addition we set
\begin{itemize}
    \item $q'_1 = g_{\ell}.q'.$
\end{itemize}
Note that $q'_1 \in M_{\text{good start}}(\ell)$ by the choice of $q'$.
Moreover, as we assumed that there exists a subset of points $\mathfrak{S}(q_{1})\subset W^{s}(q_{1/2})$ of positive $\mu^{s}_{q_{1/2}}$ density and we have the following property of conditional measures - $g_{\ell/2}.\mu^{s}_{q_{1/2}} \propto \mu^{s}_{q_1}$, we see that $g_{\ell/2}.\mathfrak{S}(q_{1/2})$ is of positive $\mu^{s}_{q}$ density, hence we may assume in addition in our choices that
\begin{itemize}
    \item $q'\in g_{-\ell/2}.\mathfrak{S}(q_{1/2})\cap g_{-\ell}.M_{\text{good start}}(\ell),$
\end{itemize}
and in addition
\begin{itemize}
    \item $q'_{1}=g_{\ell}.q'\in g_{\ell/2}.\mathfrak{S}(q_{1/2})\cap M_{\text{good start}}(\ell).$
\end{itemize}

Here we briefly indicate how to modify the choices for the QNI condition of Definition~\ref{def:QNI2}. Taking as a first choice $q_{1/2}$ instead of $q_{1}$, as long as $q_{1/2}\in\mathcal{P}$ will not make any changes to the actual proofs given in Proposition~\ref{prop:good-configuartion} and Corollary~\ref{cor:plenty-of-Y-config} and they will work for $q_{1/2}$ instead of $q_{1}$ verbatim.

Notice the following easy Lemma.
\begin{lem}
Fix some $\epsilon,\nu>0$. Get $C,k_0$ as in Definition~\ref{def:QNI2}.
For any $X\subset M$ a measurable set with $\mu(X)>1-\epsilon$, there exists a set of points $X'\subset X$ with $\mu(X')>1-c_{\text{rec}}(\epsilon)$ with $c_{\text{rec}}(\epsilon)\to 0$ as $\epsilon\to 0$, such that for each $x\in X'$ there exists a subset of times $\ell\in \mathbb{R}$ of (Banach) density greater than $1-c'_{\text{rec}}(\nu)$ such that
$g_{\ell/2}.x, g_{-\ell/2}.x \in X'$.
\end{lem}
The proof follows immediately by considering the set of uniformly generic points in $X$, which is of almost full measure. Refining it further we may assume lower bound over the uniformity and the recurrence times of the orbits to $X$.
Now the set of forward return times is of density close to $\mu(X)$. Similarly, the density of the set of backwards returns to $X$ is also of density close to $\mu(X)$. Comparing the set of times, we get a set of small density (up to say factor of $2$) where such multiple recurrence does not hold.

Considering the set $X$ to be the intersection of $\mathcal{X}$ with the set of uniformly generic points (of given uniformity), we see that we have plenty of choices where $q_{1},q_{1/2},q$e all in $\mathcal{X}$ and generic.

As a result, refining the choices of those triplets $q_{1},q_{1/2},q$ with the set $M_{\text{good start}}$ we see that we may find those triplets in a good configuration.

Moreover, given the set $g_{-\ell/2}.Q_{\text{good stop}}(q_{1},\ell)\subset W^{uu}_{\text{loc}}(q_{1/2})$, which is of large density, and the set $U_{q_{1/2}}$ which is of large density, we may intersect them both and ensure that $u.q_{1/2}\in U_{q_{1/2}}$.
Furthermore, we may assume that $q'\in M_\text{good start, avoidance}(\ell)$ is also chosen such that $q'_{1/2}\in S_{q_{1/2}}$ as Lemma~\ref{lem:subspace-avoidance} ensures us a set of large density at $W^{s}(q)$ of ``good vectors for factorization'' and by the definition~\ref{def:QNI2}, the set $S_{q'_{1/2}}$ is of large density as well.

\subsection{Existence of synchronized $Y$-configurations}\label{sub:coupled}
Using the choices made above with Corollary~\ref{cor:plenty-of-Y-config}, for $\ell\in D_{\text{good}}$ we have that $q,q_{1}$ and most of $u.q_{1}$ and $q',q'_{1}$ and most of $u'.q'_{1}$ both are forming good $Y$ configurations.
\begin{obs}
By simply unfolding the definition of the set $D_{\text{good}}$ and the sets $E_{\text{good starting times, both branches}}(q_{1},u)$, $E_{\text{good starting times, both branches}}(q'_{1},u')$ we get
\begin{equation*}
    g_{\tau_{1,(\varepsilon)}(q_{1},u,\ell)}.u.q_{1}, \  g_{\tau_{1,(\varepsilon)}(q'_{1},u',\ell)}.u'.q'_{1} \in M_{\text{rec}}
\end{equation*}
\end{obs}

Suppose that $u.q_{1}$ is chosen such that $u.q_{1}\in Q_{\text{good stop}}(q_{1},\ell)$. \\
Set $t=\tau_{1,(\varepsilon)}(q_1,u,\ell)$ and define $t_{1}$ by the cocycle equation~\eqref{eq:def-of-t_2}.

Since $\ell\in D_{\text{good}}$, by construction we get $\ell\in E_{\text{good starting times, both branches}}(q_{1},u)$ and so $g_{t_1}.q_{1}\in M_{\text{rec}}$.

Similarly, if $u'.q'_1 \in Q_{\text{good stop}}(q'_{1},\ell)$, and we define $t'_{1}$ in an analogous manner, we get that $g_{t'_{1}}.q'_1 \in M_{\text{rec}}$.

Define $$\nu(u)=A(q_{1},u,\ell,\tau_{1,(\varepsilon)}(q_1,u,\ell)).F_{q}(q').$$
\begin{prop}\label{prop:A-avoidance}
There exists a subset $$Q_{\text{good stop, avoidance}}(q_{1},q'_{1},\ell)\subset~ Q_{\text{good stop}}(q_{1},\ell)$$ with $m_{uu}(Q)>1-c_{5}(\delta)$ with $c_{5}(\delta)\to 0$ as $\delta\to 0$ and a number $\ell'=~\ell'(\delta,\varepsilon)$ such that for all $\ell>\ell'$, $q_{1} \in M_{\text{good start}}(\ell)$ and $u.q_{1}~\in~ Q_{\text{good stop, avoidance}}(q_1,q'_1,\ell)\subset W^{u}(q_{1})$ we have
\begin{equation*}
    C(\delta)^{-1}\cdot\varepsilon\leq \lVert \nu(u) \rVert \leq C(\delta)\cdot\varepsilon.
\end{equation*}
Moreover, we may take $C(\delta)$ to be bounded bellow and above in the interval $(0,\infty)$ in $\delta$ as $\delta\to 0$.
\end{prop}
\begin{proof}
We already picked $q_{1},q'_{1}$ and $\ell$, so we have fixed $q,q'$ by that.
As a result, we have fixed a vector $F_{q}(q')\in V(q)$ by the construction of the vector bundle $V$ in~\S\ref{sub:approx-holonomies}.
Now consider some $u.q_1 \in Q_{\text{good stop}}(q_{1},\ell)$.
If $F_{q}(q')$ avoids $\mathfrak{M}_{u}$, we are done as the inequalities follow at once from the estimates of Lemma~\ref{lem:badsubspace}.
Fixing $Q_{\text{good stop}}(q_{1},\ell)$ in $W^{uu}_{\text{loc}}(q_{1})$ as in Corollary~\ref{cor:plenty-of-Y-config}.
Due to the avoidance Lemma~\ref{lem:subspace-avoidance}, we may find a subset $Q_{\text{good stop, avoidance}}=Q_{\text{good stop, avoidance}}(q,q',\ell)\subset Q_{\text{good start}}(q_{1},\ell)$ such that for all $u\in Q$ we have
\begin{equation*}
    dist(F_{q}(q'),\mathfrak{M}_{u}) \geq \rho(\delta).
\end{equation*}
By the assumptions regarding the density of $Q_{\text{good stop}}(q_{1},\ell)$ as in~\eqref{eq:good-stop-density} and the subset resulting from Lemma~\ref{lem:subspace-avoidance}, we have that \\ $Q_{\text{good stop, avoidance}}(q_{1},q_{1}',\ell)\subset~W^{uu}_{\text{loc}}(q_{1})$ is a set of density large than some $1-c_{5}(\delta)$ as needed.
\end{proof}
\begin{prop}\label{prop:packing-all-data}
There exists a subset $M'(\ell)\subset M$ such that $\mu(M'(\ell))>~1-~c'_{6}(\delta)$ and for each $q_{1},q'_1\in M'$ a subset $Q=Q(q_{1},q'_1,\ell)\subset Q_{\text{good stop, avoidance}}(q_{1},q'_{1},\ell)$ such that
\begin{equation*}
    \frac{\mathsf{m}^{uu}_{q_1}(Q)}{\mathsf{m}_{uu}(W^{uu}_{\text{loc}}(q_1))}>1-c_{6}(\delta)
\end{equation*}
and a number $\ell_{6}$ such that for all $\ell>\ell_6$, $q_{1} \in M', u.q_{1} \in Q$ we have
\begin{equation}\label{eq:nu-estimate}
    c'_{1}(\delta)\varepsilon \leq \lVert \nu(u) \rVert \leq c_{2}'(\delta)\varepsilon,
\end{equation}
\begin{equation}\label{eq:hd-estimate-final}
    c''_{1}(\delta)\cdot\varepsilon \leq hd_{g_{\tau_{1,(\varepsilon)}(q_{1},u,\ell)}.u.q_{1}}(W^{uu}_{\text{loc}}(g_{\tau_{1,(\varepsilon)}(q_{1},u,\ell)}.u.q_{1}),W^{uu}_{\text{loc}}(g_{t'_1}.q'_1)) \leq c''_{2}(\delta)\cdot\varepsilon
\end{equation}
and
\begin{equation}\label{eq:tau-estimate}
    \alpha_{3}^{-1}\cdot\ell\leq \tau_{1,(\varepsilon)}(q_{1},u,\ell) \leq \alpha_{3}\cdot\ell,
\end{equation}
with $c_{6}(\delta),c'_{6}(\delta)$ tend to $0$ as $\delta$ goes to $0$.
\end{prop}
\begin{proof}
Consider the set $M_{\text{good start}}(\ell)$ which was constructed before and intersect it with the set $K$ as described in the factorization theorem, Theorem~\ref{thm:factorization} to define $M'$.
Furthermore, for each $q_{1},q'_{1}\in M'$ we define $Q$ to be the subset of $Q_{\text{good stop, avoidance}}(q_{1},q'_{1},\ell)$ refined such that its points satisfy the quantitative non-integrability condition.

Equation~\eqref{eq:nu-estimate} follows immediately from the definition of $\nu$ and $\tau_{2,(\varepsilon)}$ and the estimates proven in Proposition~\ref{prop:A-avoidance}.
Equation~\eqref{eq:hd-estimate-final} follows from ~\eqref{eq:nu-estimate} and the factorization theorem, which gives
\begin{equation*}
    c'_{1}(\delta)\cdot\varepsilon-e^{-\alpha\cdot\ell}\leq hd_{g_{t}.u.q_{1}}(W^{uu}_{\text{loc}}(g_{t}.u.q_{1}),W^{uu}_{\text{loc}}(g_{t}.q'_1)) \leq c'_{1}(\delta)\cdot\varepsilon + e^{-\alpha\cdot\ell}.
\end{equation*}
While the estimates in~\eqref{eq:tau-estimate} follows from the upper bound given in the a-priori growth estimate in~\ref{sub:a-priori-stopping} and the lower bound given by the definition of the Lyapunov norms.
\end{proof}

Let $\varkappa$ denote the minimum between the densities of $Q_{QNI}(q_1,q_1')$ in $W^{uu}_{\text{loc}}(q_1)$ and $Q_{QNI}(q'_1,q_1)$ in $W^{uu}(q'_1)$.
From now on we will assume in addition that $\delta$ is small enough such that both densities of $Q(q_{1},q_{1}',\ell)$ and $Q(q'_1,q_1,\ell)$ are strictly greater than $1-\varkappa$, namely $c_{6}(\delta)<\varkappa$ and in addition, $\delta$ is small enough such that $\mu(M'(\ell))>1-\mu(\mathfrak{P})$ namely $c_{6}(\delta)<\mu(\mathfrak{P})$.

This assumption allow us to conclude that $M'(\ell)$ contains points $q_{1}$ which satisfy QNI and also for each such point, $Q(q_{1},q'_{1},\ell)$ contains points $u.q$ which satisfy QNI and $Q(q'_1,q_1,\ell)$ contains $u'.q'$ which satisfy QNI.

\begin{prop}\label{prop:match-times}
Fix some $q_{1}\in~M'(\ell)\cap \mathfrak{P}$, $q_{1}'\in \mathfrak{S}(q_1)\cap M'(\ell)$.
Suppose that $u.q_{1} \in~Q(q_{1},q'_{1},\ell)\cap~Q_{QNI}(q_{1},q'_1)$, $u'.q'_{1}\in Q(q'_{1},q_{1},\ell)\cap~Q_{QNI}(q'_1,q_1)$ for some $\ell>\ell_6$ and~\eqref{eq:hd-estimate-final} holds for both, then there exists some $C_{0}(\delta)>0$ such that
\begin{equation}
    \left\lvert \tau_{1,(\varepsilon)}(q_1, u, \ell) - \tau_{1,(\varepsilon)}(q'_1, u',\ell) \right\vert \leq C_{0}(\delta).
\end{equation}
\end{prop}

\begin{proof}
Notice that we have $u'.q_{1}'\in W^{uu}_{\text{loc}}(q'_{1})$. Hence we get
\begin{equation}
\begin{split}
    hd_{g_{t'}.u'.q'_1}(W^{uu}_{\text{loc}}(g_{t'}.u'.q'_1),W^{uu}_{\text{loc}}(g_{t'}.u.q_1)) &\leq \lVert A(q'_{1},u',\ell,t')F_{q'}(q)\rVert+e^{-\alpha\cdot\ell} \\
    &\leq C(\delta)\cdot\mathfrak{A}(q'_1,u',\ell,t') \\
    &= C(\delta)\cdot \varepsilon,
\end{split}
\end{equation}
for some $C(\delta)>1$.
This inequality means that $W^{uu}_{\text{loc}}(g_{t'}.u'.q'_1)\cap B_{1}(g_{t'}.u'.q'_1)$ of distance of $O_{\delta}(\varepsilon)$ from $W^{uu}_{\text{loc}}(g_{t'}.u.q_{1})$. Hence contracting the pieces a bit by flowing backwards with $g_{t}$ (of length $O_{\delta}(1)$, depending on the Lyapunov spectrum), one gets that $$hd_{g_{t'-O_{\delta}(1)}.u'.q'_1}(W^{uu}_{\text{loc}}(g_{t'-O_{\delta}(1)}.u'.q'_1), W^{uu}_{\text{loc}}(g_{t'-O_{\delta}(1)}.u.q_1))=\varepsilon.$$
In particular, by definition of $t$, one must have $t'\leq t+O_{\delta}(1)$.

Now for the inverse inequality, we follow~\cite[Claim~$12.7$]{eskin-mirzakhani}.
We assume that $t'$ is significantly smaller than $t$, in the sense of $t-t'\geq O_{\delta}(1)$.
We apply the avoidance Proposition~\ref{prop:A-avoidance} (c.f. also Lemma~\ref{lem:subspace-avoidance}) to $A(q_{1}',u',\ell,t')$ and get a point $q_{1}''\in W^{s}_{\text{loc}}(q_{1}')$ such that 
\begin{equation}\label{eq:avoidance-estimate-q''}
    C(\delta)^{-1}\cdot \mathfrak{A}(q_{1}',u',\ell,t')\leq A(q_{1}',u',\ell,t')F_{q_{1}'}(q_{1}'').
\end{equation}

In view of Proposition~\ref{prop:packing-all-data} we have
\begin{equation*}
    hd_{g_{t'}.u'.q_{1}'}\left(W^{uu}_{\text{loc}}(g_{t'}.u'.q_{1}), W^{uu}_{\text{loc}}(g_{t'}.q_{1}'')\right) \geq c_{1}''(\delta)\cdot \varepsilon.
\end{equation*}
By the factorization theorem~\ref{thm:factorization} we get
\begin{equation*}
    hd_{g_{t'}.u'.q_{1}'}\left(W^{uu}_{\text{loc}}(g_{t'}.u'.q_{1}'), W^{uu}_{\text{loc}}(g_{t'-t}.g_{t}.u.q_{1})\right) \ll \varepsilon\cdot C\cdot e^{-\beta\cdot (t-t')} +  e^{-\alpha\cdot \ell},
\end{equation*}
where $\beta$ depends on the Lyapunov spectrum of $(M,g_{t},\mu)$, and the factor of $e^{-\beta\cdot (t-t')}$ arises from flowing $g_{t'-t}$ from $g_{t}.u.q_{1}$.

By the triangle inequality we get
\begin{equation*}
    hd_{g_{t'}.u'.q'_{1}}\left(g_{t'}.u'.q'_{1}, g_{t'}.q'' \right) \gg \varepsilon \cdot \left(c_{1}''(\delta)- C\cdot e^{-\beta\cdot (t-t')}\right) - e^{-\alpha\cdot \ell}.
\end{equation*}
Combining with ~\eqref{eq:avoidance-estimate-q''} we get
\begin{equation*}
    e^{-\beta\cdot (t-t')} \gg C(\delta)^{-1}-c_{1}''(\delta).
\end{equation*}
Observing that $C(\delta)$ is bounded, and $c_{1}''(\delta)\to 0$ in $\delta$, we get a contradiction if $t-t'>O_{\delta}(1)$.
\end{proof}

Now we have that
\begin{equation}\label{eq:distortion-q'_2}
    \lvert \lambda_{1}(u.q_1,\tau_{1,(\varepsilon)}(q_{1},u,\ell)) - \lambda_{1}(u'.q'_1,\tau_{1,(\varepsilon)}(q'_{1},u',\ell)) \rvert \leq C_{4}''(\delta).
\end{equation}
This is equivalent to
\begin{equation}\label{eq:distortion-q'_3}
    \lvert \lambda_{1}(q_1,t_{1}) - \lambda_{1}(q'_1,t_{1}') \rvert \leq C_{4}''(\delta).
\end{equation}
\texttt{Obtaining matched pairs of points $u.q$, $u'.q'$}
Two $Y$ configurations $\mathcal{Y}=\left\{q,q_1,u.q_1,q_2,q_3\right\}$ and $\mathcal{Y}'=\left\{q',q'_1,u'.q'_1,q'_2,q'_3\right\}$ composed of generic points, forming dynamical quadrilaterals \emph{satisfying QNI} are called $\varepsilon-$\emph{matched} if
\begin{equation}\label{eq:dist-req-matching}
hd_{g_{t}.u.q_{1}}\left(W^{uu}_{\text{loc}}(g_{t}.u.q_1),W^{uu}_{\text{loc}}(g_{t'}.u'.q'_1)\right)<\varepsilon, \     hd_{g_{t'}.u'.q_{1}}\left(W^{uu}_{\text{loc}}(g_{t'}.u'.q'_1),W^{uu}_{\text{loc}}(g_{t}.u.q_1)\right)<\varepsilon.
\end{equation}
We are now trying to find $\varepsilon-$\emph{matched} $Y$-configurations.

This is done in a manner analogous to \cite[Lemmata $12.8, 12.9$]{eskin-mirzakhani}.

As we will work over $W_{\text{loc}}^{uu}(q_{1}), W_{\text{loc}}^{uu}(q_{1})$, we identify them locally with $\mathbb{R}^{n}$.

Recall the sets $Q_{\text{QNI}}(q_1,q_{1}') \subset W^{>1}_{\text{loc}}(q_1), Q_{\text{QNI}}(q_1',q_{1}) \subset W^{>1}_{\text{loc}}(q_1')$.

Each of these sets composed of points $u.q_{1}$, (respectively  $u'.q'_1$)  for which the projection under the central stable holonomy to $W^{>1}_{\text{loc}}(q_1')$ (respectively $W^{>1}_{\text{loc}}(q_1)$) is composed of a generic point which will form a dynamical quadrilateral.

Previously we have shown that one may find plenty of \emph{good} $Y$ configurations originating from either $q_{1}$ or $q_1'$, which satisfy one of the distance inequalities in~\eqref{eq:dist-req-matching}, but not necessarily having the matching point $u.q_1$ or $u'.q'_1$ being a generic point. 
The following lemma allows us to choose both good $Y$ configurations simultaneously.

\begin{lem}\label{lem:matching-lem}
In the same assumptions and notations as Propositions~\ref{prop:packing-all-data}, \ref{prop:match-times}, there exists a subset $\tilde{Q}(q_1,q'_1,\ell)\subset Q(q_1,q'_1,\ell)$ with $\frac{\mu^{uu}_{q_1}(\tilde{Q}(q_1,q'_1,\ell))}{\mu^{uu}_{q_1}(Q(q_1,q'_1,\ell))}>c_{7}(\delta)>0$ for some $c_{7}(\delta)$ such that for all $u.q\in \tilde{Q}(q_1,q'_1,\ell)$, its central-stable projection lands inside $Q_{\text{QNI}}(q'_1,q_1)$.  

\end{lem}

This Lemma depends heavily on \cite[Lemma~$12.9$]{eskin-mirzakhani}.

We start with the following proposition which is analogous to ~\cite[Proposition~$6.14$]{eskin-mirzakhani}).
This lemma uses factorization of normal forms together with distortion estimate in order to compare certain sizes of sets in $W^{uu}_{\text{loc}}$. 
\begin{prop}\label{prop:distortion-est}
    Under the previous notation, assume $t$ is large so that
    \begin{equation*}
        hd_{g_{t}.u.q_1}\left(W^{uu}_{\text{loc}}(g_{t}.u.q_{1}), W^{uu}_{\text{loc}}(g_{t}.q_{1}') \right) \leq \epsilon,
    \end{equation*}
    for some $u\in W^{uu}_{\text{loc}}(g_{t}.q_{1})$.
    Suppose that $x\in W^{uu}_{\text{loc}}(g_{t}.u.q_{1}) \cap B(g_{t}uq_{1}, O(1)) $.
    Define
    \begin{equation*}
        A_{t} = W^{uu}_{\text{loc}}(g_{t}.u.q_{1})\cap B(x,\epsilon_0), \ A'_{t} = W^{uu}_{\text{loc}}(g_{t}.q'_{1})\cap B(x,\epsilon_0),
    \end{equation*}
    for some $\epsilon_0<\epsilon$.
    
    Then there exists $\kappa>0$ such that
    \begin{equation*}
        \kappa^{-1} \cdot \left\lvert g_{-t}.A_{t}\right\rvert \leq \left\lvert g_{-t}. A'_{t}\right\rvert \leq \kappa\cdot \left\lvert g_{-t}.A_{t}\right\rvert
    \end{equation*}
    where the Haar measure is defined as the normalized measure induced from the Haar measure over $W^{uu}_{\text{loc}}(q_{1}) \cap \mathcal{B}_{0}(q_{1})$ and $W^{uu}_{\text{loc}}(q'_{1}) \cap \mathcal{B}_{0}(q'_{1})$ respectively.
\end{prop}

This proposition follows from a distortion estimate, as $W^{uu}_{\text{loc}}(u.q_{1}), W^{uu}_{\text{loc}}(q_{1}')$ are close for all times in $[0,t]$, together with the estimates in~\cite{ABV} regarding H\"{o}lder continuity of the Lyapunov subspaces.
The proof of Lemma~\ref{lem:matching-lem} follows verbatim from \cite[Lemma~$12.9$]{eskin-mirzakhani}, using our  partition $\mathcal{B}$, replacing their Claim~$12.6$ with our Proposition~\ref{prop:packing-all-data}.

\subsection*{Choice of $u.q_{1}, u'.q'_{1}, q_{2}, q'_{2}, q_{3}, q'_{3}$}
Pick some
\begin{itemize}
    \item $\ell\in D_{\text{good}}$
\end{itemize}
such that $\ell>\ell_{6}$ and
\begin{itemize}
    \item $q_{1}\in M'(\ell)\cap g_{\ell/2}.\mathfrak{P}$, which have non-zero intersection in view of our choice of $\delta$.
    \item $u.q_{1}\in Q(q_{1},q'_{1},\ell)$ such that $g_{-\ell/2}.u.q_{1}\in Q_{QNI}(q_{1/2},q'_{1/2})$, which have non-empty intersection in view of our choice of $\delta$.
    \item $u'.q'_{1} \in Q(q'_{1},q_1,\ell)$ such that $g_{-\ell/2}.u'.q'_{1}\in Q_{QNI}(q'_1,q_1)$, which exists by Lemma~\ref{lem:matching-lem}.
\end{itemize}

As a result we fix the following points:
\begin{itemize}
    \item  $q_{2}=g_{\tau_{1,(\varepsilon)}(q_{1},u,\ell)}.u.q_{1},$
    \item $q_{3} = g_{t_1}.q_{1},$
    \item $q'_{2} =g_{\tau'_{1,(\varepsilon)}(q'_{1},u',\ell)}.u'.q'_{1},$
    \item $q'_{3} = g_{t'_1}.q'_{1},$
\end{itemize}
where $\tau_{1,\varepsilon},\tau'_{1,(\varepsilon)},t_1,t'_1$ as defined before in~\eqref{eq:stopping-time-def},\eqref{eq:def-of-t_2}.

All those points belong to $M_{\text{rec}}$ by the choices we made, but the $Y$-configuration sides are not of the same length!

Clearly we have
$$g_{\tau_{1}}.u'.q'_{1} = g_{\tau_{1}-\tau'_{1}}. g_{\tau'_1}.u'.q'_{1} \in g_{\tau_{1}-\tau'_{1}}M_{\text{rec}}.$$
By Proposition~\ref{prop:match-times} we get that
$g_{\tau'_1 - \tau_1}.M_{\text{rec}} \subset g_{[0,C(\delta)]}.M_{\text{rec}}$ by~\eqref{eq:distortion-q'_2}
Similarly, we get that $g_{t_1}.q_{1} \in~ M_{\text{rec}}$, but \\ $g_{t_1}.q'_{1} = g_{t_1-t'_1}.g_{t'_1}.q'_{1} \in~ g_{[0,C(\delta)]}.M_{\text{rec}}$ by ~\eqref{eq:distortion-q'_3}.

Notice that by the definition of $M_{\text{rec}}$ via the recurrence density~\eqref{eq:rec-density}, for any point $x\in M_{\text{rec}}$, for the majority of times $t>T''(\delta)$, $g_{t}.x\in~M_{base}$.
In particular, one case find a number $s$ (bounded by a constant which depends on $\delta$) such that at the time $\tilde{\tau}=\tau_{1}+s$ both points $q_{2}=g_{\tilde\tau}.u.q_{1}$, $q'_2=g_{\tilde\tau}.u'.q'_1$ belong to $M_{base}$.
Similarly, there exists a number $s'$ such that at the time $\tilde t = t_{2}+s'$ both points $q_{3}=g_{\tilde t}.q_{1}$ and $q'_3=g_{\tilde t}.q'_{1}$ both belong to $M_{base}$.

\begin{rem}
With those choices of the endpoints, one may not guarantee exactly that the expansion rate and contraction rate will cancel each other, but one deduces that by changing the conditional measures between $q_{2}, q_{3}$ and $q_{2}', q_{3}'$ the measure change is reflected by the movement along the $E^{\lambda_1}$ direction composed with a small dilation (bounded as a function of the exact choices of $s, s'$).
\end{rem}

\subsection{Endgame}\label{sub:endgame}
\subsubsection*{Transfer of the conditional measures along the configurations}
\begin{prop}\label{prop:transfer-of-conditional-measures}
There exists some $\Delta=\Delta(\delta,\ell)$ such that $\Delta\to 0$ as $\ell \to \infty$ and 
\begin{equation*}
    d_{W}(\mathbf{f}_{1}(q_{2}),\mathbf{f}_{1}(q'_{2})) \leq \Delta.
\end{equation*}
\end{prop}

We recall the following standard fact
\begin{lem}
Suppose $T:M\to M$ preserves $\mu$ and also for almost every $x\in M$ we have
$$ \mathcal{B}_{0}(T.x)\cap T.\mathcal{B}_{0}(x) = T.\mathcal{B}_{0}(x)\cap \mathcal{B}_{0}(T.x). $$
Then
$$ \mathbf{f}_{1}(T.x) \propto T.\mathbf{f}_{1}(x) $$
in the sense that the restriction of both measures to the set $\mathcal{B}_{0}(T.x)\cap~ T.\mathcal{B}_{0}(x)$ where they both defined is the same up to normalization.
\end{lem}
For proof, see~\cite[Lemma~$4.2(iv)$]{einsiedler-lindenstrauss}.

In view of the normalized Wasserstein distance we actually have
\begin{equation*}
    d_{W}(\mathbf{f}_{1}(T.x),T_{\star}\mathbf{f}_{1}(x)) = 0.
\end{equation*}

Recall that we fixed a normal forms coordinates over our manifolds in a way such that the points were chosen in a set with bounded structure. 
Under this identification, for every $u\in E^{>1}(x), t,s\in\mathbb{R}$ we can define a map $S_{t,u,x,s}$ in normal forms such that
$S_{t,u,x,s}.x=g_{t}u.(g_{-s}.x) \in~W^{u}(g_{t-s}.x)$.

In view of the construction of the the conditional measures given in \S\ref{sec:conditional-measures} and Ledrappier's result~\cite[Lemmma~$3.8$]{Ledrappier-Sinai} we get the following Corollary.
\begin{cor}
We have that for almost every $x$, any $s,t\in \mathbb{R}$ and $u \in W^{uu}_{\text{loc}}(x)$
\begin{equation*}
     d_{W}(\mathbf{f}_{1}(S_{t,u,x,s}.x) , (S_{t,u,x,s})_{\star}\mathbf{f}_{1}(x)) = 0.
\end{equation*}
\end{cor}
By definition have that $\mathbf{f}_{1}(g_{-s}.x) \propto (g_{-s})_{\star}\mathbf{f}_{1}(x)$, based on the construction of $\mathbf{f}_{1}$.
Now by absolute continuity of the Lebesgue measure under translations, we get
\begin{equation*}
    d_{W}\left(\mathbf{f}_{1}(u.g_{-s}.x),\mathbf{f}_{1}(g_{-s}.x)\right) = 0 ,
\end{equation*}
and the Corollary follows.

Assume now that the eight points $q_{1},q'_{1},u.q_{1}, u'.q'_{1},,q_{2},q'_{2},q_{3},q'_{3}$ are all chosen to be good point as in the previous section.

In particular we have $q_{3}=g_{\tilde{t}}.q_{1}$,$q_{2}=g_{\tilde{\tau}}.u.q_{1}$.
Using normal forms coordinates over $W^{u}(q_{1})$ we can define a sub-resonant map $S_{\tilde{\tau},u,q_{1},\tilde{t}}$ such that $S_{\tilde{\tau},u,q_{1},\tilde{t}}(q_{2}) = q_{3} \in W^{u}(g_{\tilde{\tau}}.q_{1})$. Similarly we have a sub-resonant map $S_{\tilde{\tau},u',q'_{1},\tilde{t}}$ such that
$S_{\tilde{\tau},u',q'_{1},\tilde{t}}(q'_{2}) = q'_{3} \in W^{u}(g_{\tilde{\tau}}.q_{1})$

In view of the discussion above
\begin{equation}\label{eq:conditional-propto}
    \begin{split}
        \mathbf{f}_{1}(q_{2}) &\propto (S_{\tilde{\tau},u,q_{1},\tilde{t}})_{\star}\mathbf{f}_{1}(q_{3}), \\
        \mathbf{f}_{1}(q'_{2}) &\propto (S_{\tilde{\tau},u',q'_{1},\tilde{t}})_{\star}\mathbf{f}_{1}(q'_{3}).
    \end{split}
\end{equation}

\begin{lem}
We have that 
\begin{equation*}
    d_{W}(\mathbf{f}_{1}(q_{3}),\mathbf{f}_{1}(q'_3)) \ll \varepsilon.
\end{equation*}
\end{lem}
\begin{proof}
As $q_{3}$ and $q'_{3}$ are \emph{stably-related} and in an Oseledets' good set, we may recover $\Rc$ from the $\tilde{\Rc}$ which in turn is related to $\tilde{\Qc}$ as been done during the proof of the factorization theorem.
As $\tilde{\Qc}$ is the forward flag, it is smooth along \emph{stable leaves} (c.f. Ruelle's theorem~\ref{thm:smoothness-leaves}) and so we get that $dist(\tilde{\Qc}(q_{3}).\tilde{\Qc}(q'_{3})) = O(dist(q,q'_{3}))$, hence in particular we get $$\lvert \Qc(q_{3})-\Qc(q'_{3}) \rvert = O(dist(q_{3},q'_{3})).$$
As both $q_{3},q'_{3}$ are in the Oseledets' good set $M_{base}$, the translation maps $I^{0}_{q_{3}}, I^{0}_{q'_3}$ as defined in~\eqref{eq:I-def} give that
\begin{equation*}
    \lvert \Rc(q_{3})-\Rc(q'_{3}) \rvert = O(dist(q_{3},q'_{3})).
\end{equation*}
As $\mathbf{f}_{1}$ was normalized according to the cocycle $\Rc$, the result follows as long as $\tilde{t}\to\infty$ where $q_{3}=g_{\tilde{t}}.q_{1}$.
\end{proof}

\begin{cor}\label{cor:almost-invariance}
\begin{equation*}
    d_{W}(\mathbf{f}_{1}(q_2),\mathbf{f}_{1}(q'_2))\ll \varepsilon.
\end{equation*}
\end{cor}
We have by the triangle inequality
\begin{equation*}
        d_{W}(\mathbf{f}_{1}(q_2), \mathbf{f}_{1}(q'_2)) \leq d_{W}(\mathbf{f}_{1}(q_{2}), \mathbf{f}_{1}(q_{3})) + d_{W}(\mathbf{f}_{1}(q_{3}),\mathbf{f}_{1}(q'_{3})) + d_{W}(\mathbf{f}_{1}(q'_{3}),\mathbf{f}_{1}(q'_{2})),
\end{equation*}
which can be written as
\begin{equation*}
    d_{W}(\mathbf{f}_{1}(q_{2}), (S_{\tilde{\tau},u,q_{1},\tilde{t}})_{\star}.\mathbf{f}_{1}(q_{2})) + d_{W}(\mathbf{f}_{1}(q_{3}),\mathbf{f}_{1}(q'_{3})) + d_{W}((S_{\tilde{\tau},u',q'_{1},\tilde{t}})_{\star}.\mathbf{f}_{1}(q'_{2}),\mathbf{f}_{1}(q'_{2})).
\end{equation*}
and in view of~\eqref{eq:conditional-propto} and the normalized Wasserstein distance, we actually get
\begin{equation*}
    d_{W}(\mathbf{f}_{1}(q_2),\mathbf{f}_{1}(q'_2)) \leq d_{W}(\mathbf{f}_{1}(q_3),\mathbf{f}_{1}(q'_3)) \ll \varepsilon.
\end{equation*}

\subsubsection*{Generating limit points}
\texttt{Taking the limit at $\ell \to \infty$.}

Assume that $\ell_{k}\to~\infty$.
Without loss of generality, we may assume that $q_{i}(\ell_{k})\to~\tilde{q}_{i}$, $q'_{i}(\ell_{k})\to~\tilde{q}'_{i}$, $x(\ell_{k})\to \tilde{x}$, $z(\ell_{k})\to \tilde{z}$ as $M$ is compact, for $i=1,2,3$.
Moreover, due to the choices defined above, we may assume that the limit points all belong to the compact set $M_{base}$ consisting of Oseledets' good points. Moreover, we may assume those points are generic with respect to $\mu$.
In view of the factorization theorem, we see that the associated points $dist(u.\tilde{q}_{2}(\infty),W^{u}(\tilde{q}'_{2}))=~0 $
or equivalently we have 
$\tilde{q}_{2}\in W^{u}(\tilde{q}'_2)$.
But we still have $\varepsilon \ll_{\delta} d(\tilde{q}_{2},\tilde{q}'_{2}) \ll_{\delta} \varepsilon$.
Moreover, in view of Theorem~\ref{thm:factorization} and the explicit construction given there, we have that
$dist(x(\ell_{k}),z(\ell_{k})) \ll e^{-\alpha\cdot\ell_{k}}$, and also for the appropriately chosen stopping time $\tilde{\tau}$ we get 
$$ dist(q_{2}(\ell_{k}),g_{\tilde{\tau}}.z(\ell_{k})) \ll e^{-\alpha\cdot\ell_{k}},  $$
and in particular, as $W^{u}(g_{\tilde{\tau}}.z(\ell_{k})) = W^{u}(q'_{2}(\ell_{k})$, we have that
$$ dist(q_{2}(\ell_{k}),W^{u}(q'_{2}(\ell_{k})) \ll e^{-\alpha\cdot\ell_{k}}. $$
As we take $\ell_{k}\to \infty$, we get that $dist(\tilde{q}_{2},W^{u}(\tilde{q}'_{2})) =0$ so
$$ \tilde{q}_{2} \in W^{u}(\tilde{q}'_{2}). $$
In view of Corollary~\ref{cor:almost-invariance}, we have that
$$ d_{W}(\mathbf{f}_{1}(\tilde{q}_{2}), \mathbf{f}_{1}(\tilde{q}'_{2}))=0 $$

This concludes the proof of Proposition~\ref{prop:main}.

\subsection*{Obtaining extra invariance via computation in normal forms coordinates}

\texttt{Taking the limit as $\varepsilon \to 0$.}

Assume $\left\{\varepsilon_{m} \right\}_{m\in \mathbb{N}}$ is some sequence of positive numbers converging to $0$.
We apply Proposition~\ref{prop:main} for each $\varepsilon_{m}$ to get sets $\mathcal{M}'_{m}$.
Define
\begin{equation*}
    \mathcal{F} = \bigcap_{k=1}^{\infty}\bigcup_{m=k}^{\infty}\mathcal{M}'_{m},
\end{equation*}
namely $\mathcal{F} = \limsup_{m\to\infty}(\mathcal{M}'_{m})$.
Note that as each $\mathcal{M}'_{m}$ is of strictly positive measure $\mu(\mathcal{M}'_{m})\geq \delta_{0}$, we must have that $\mu(\mathcal{F})>0$.
Assume now $x\in \mathcal{F}$. So $x\in E_{m_i}$ for infinitely $m_{i}'s$, hence there exist points $y_{m_i}\in W^{u}(x)$ such that $\varepsilon_{m_i} \ll dist(x,y_{m_i}) \ll \varepsilon_{m_i}$ and $f_{2}(x)\propto f_{2}(y_{m_i})$.

We write $S_{m_i}$ to be the matching sub-resonant map moving $x$ to $y_{m_i}$ in the normal forms coordinates.
We note that in the normal forms coordinates, we may identify $y_{m_i}$ with a vector given by $\underline{y}_{m_i}$ given by $\underline{y}_{m_i} = S_{m_i}(\underline{w}_{m_i})$ for some vector $\underline{w}_{m_i}$.
Denote by $\text{Inv}(\mu)$ the \emph{invariance group} of the measure $\mu$. This is a closed subgroup of the sub-resonant group $G^{\chi}_{x}$. We aim to prove the following:
\begin{lem}\label{lem:Leb}
    The group $\text{Inv}(\mu)$ contains the group $\mathcal{U}_{\lambda_{1}}$ - the group of translations over $E^{\lambda_{1}}$, as a subgroup. 
\end{lem}
As $\mu$ was assumed to be generalized $u$-Gibbs state, $\text{Inv}(\mu)$ contains all the sub-resonant maps that keep the last coordinate fixed. We denote this (normal) subgroup of $G^{\chi}_{x}$ by $\text{Inv}_{\text{Gibbs}}$.
We will consider the quotient group $G^{\chi}_{x}/\text{Inv}_{\text{Gibbs}}$.
In view of the normal forms coordinates, $G^{\chi}_{x}/\text{Inv}_{\text{Gibbs}}$ is identified as a subgroup of the $1$-dimensional affine group and in our assumptions, as $\dim E^{\lambda_{1}}>0$, this is the full affine group.

Let $\langle S\rangle$ denote the closed subgroup of $\text{Inv}(\mu)$ generated by the maps $\left\{S_{m_i}\right\}$. 
We have that $f_{1}$ is invariant under elements from $\langle S\rangle$.
If $\langle S\rangle$ contains $\mathcal{U}_{\lambda_{1}}$, we are done.

\begin{obs}
The set of maps $\left\{S_{m_i}\right\}$ is \emph{uniformly bounded}.
\end{obs}
This follows as $dist(x,y_{m_i})\ll \varepsilon_{m_i} \ll 1$.

In particular, taking a limit of $S_{m_i}$, we see that that $\langle S\rangle$ contains some element $D(x)$ for which its projection to the affine group $G^{\chi}_{x}/\text{Inv}_{\text{Gibbs}}$ is a dilation of the form $x_{\lambda_{1}}\mapsto c\cdot x_{\lambda_{1}}$, for some non-trivial dilation.
In particular, after applying $D(x)^{-1}$, the image of the group $\langle S \rangle$ inside the quotient group $G^{\chi}_{x}/\text{Inv}_{\text{Gibbs}}$ contains the identity as a limit point.
Therefore, the projection of $\text{Inv}(\mu)$ into $G^{\chi}_{x}/\text{Inv}_{\text{Gibbs}}$ contains a one-parameter subgroup of the affine group. We denote this subgroup by $\mathcal{J}(x)$.
\begin{obs}
The one parameter subgroups of the affine subgroups are either the subgroup of translations, or a subgroup conjugated to the subgroup of dilations.
\end{obs}

The later case gives rise to multiplicative character from the $\mathcal{J}(x)$ to $\mathbb{R}$, which we denote by $e^{\beta(\mathcal{J}(x))}$.
Moreover we have that the character associated with $\mathcal{J}(g_{t}.x)$ equals to $e^{\beta(\mathcal{J}(x))\cdot t}$.
This is a measurable function over $M$.
Let $C\subset M$ be a subset of positive measure consisting of good points, such that $\beta$ is a bounded function.
For $\mu$-almost every $x\in C$, the ergodic theorem implies that $g_{-t}.x\in C$ for infinitely many times $t$.
In particular we have that $x=g_{t}.g_{-t}.x$ Hence the character associated to $g_{t}.g_{-t}.x$ equals to $e^{\beta(\mathcal{J}(g_{-t}x))\cdot t}.$
In particular we have $\beta(\mathcal{J}(g_{-t}x)) \cdot t = \beta(\mathcal{J}(x))$.
As $g_{-t}.x\in B$ for arbitrarily large $t$, we must conclude that $\beta(\mathcal{J}(x))\equiv 0$.
Therefore the transformation $\mathcal{J}(x)$ is the identity, contradicting the assumption that $\mathcal{J}(x)$ is a (non-trivial) one-parameter subgroup.

Therefore, the projection of $\langle S\rangle$ contains the group of translations. In particular, $\text{Inv}(\mu)$ contains the product group $\text{Inv}_{\text{Gibbs}}\times \mathcal{U}_{\lambda_{1}}$.
A Fubini argument shows then that $\mu$ is absolutely continuous (by disintegrating over $W^{>1}$ and concluding that the conditional measure along $W^{\lambda_{1}}$ is the Lebesgue measure) and hence a-posteriori invariant under the whole group $G^{\chi}_{x}$.
This concludes the proof of Lemma~\ref{lem:Leb}.

\section{Examples and an application to equidistribution}\label{sec:applications}
\subsection{Examples originating from homogeneous dynamics}
We begin with some basic examples from homogeneous dynamics which illustrates the QNI condition and and application of our main theorem. 

\begin{ex}\label{ex:asl2}
Consider $ASL_{2}(\mathbb{R}) = SL_{2}(\mathbb{R})\rtimes \mathbb{R}^{2}$ where we identify $ASL_{2}(\mathbb{R})$ as a subgroup of $SL_{3}(\mathbb{R})$ by the subgroup of matrices
\begin{equation*}
    ASL_{2} = \begin{pmatrix}
    a & b & x \\
    c & d & y \\
    0 & 0 & 1
    \end{pmatrix}.
\end{equation*}
Let $\Gamma\leq ASL_{2}(\mathbb{R})$ by a torsion free lattice and consider $M=ASL_{2}(\mathbb{R})/\Gamma$ (this is a torus bundle over the embedded hyperbolic surface).
Define $a_{t} = \begin{pmatrix} e^{t} & 0 & 0 \\
0 & e^{-t} & 0 \\
0 & 0 & 1
\end{pmatrix}$.
One easily calculate
\begin{equation*}
    a_{t}.g.a_{-t}=\begin{pmatrix} a & e^{2t}b & e^{t}x \\ e^{-2t}c & d & e^{-t}y \\ 0 & 0 & 1 \end{pmatrix}.
\end{equation*}
Therefore one can deduce the following Lyapunov splitting:
\begin{equation*}
    T_{p}M=E^{4}\oplus E^{3} \oplus E^{0} \oplus E^{2}\oplus E^{1}, \ E^{s}=E^{4}\oplus E^{3}, \ E^{u}=E^{2}\oplus E^{1}, E^{uu}=E^{1},
\end{equation*}
where $E^{4} = \log\begin{pmatrix} 1 & 0 & 0 \\ c & 1 & 0 \\ 0 & 0 & 1 \end{pmatrix}$, $E^{3} = \log\begin{pmatrix} 1 & 0 & 0 \\ 0 & 1 & y \\ 0 & 0 & 1 \end{pmatrix}$, $E^{2} = \log\begin{pmatrix} 1 & 0 & x \\ 0 & 1 & 0 \\ 0 & 0 & 1 \end{pmatrix}$ and $E^{1} = \log\begin{pmatrix} 1 & b & 0 \\ 0 & 1 & 0 \\ 0 & 0 & 1 \end{pmatrix}$.
Moreover, one may see that the system $(M,g_{t})$ equipped with any absolutely continuous measure over $E^{1}$ satisfy QNI as
\begin{equation*}
     \begin{pmatrix} 1 & 0 & 0 \\ c & 1 & y \\ 0 & 0 & 1 \end{pmatrix} \cdot \begin{pmatrix} 1 & b & 0 \\ 0 & 1 & 0 \\ 0 & 0 & 1 \end{pmatrix} \cdot \begin{pmatrix} 1 & 0 & 0 \\ -c & 1 & -y \\ 0 & 0 & 1 \end{pmatrix} = \begin{pmatrix} 1-bc & b & -by \\ -bc^2 & 1+bc & -bcy \\ 0 & 0 & 1 \end{pmatrix},
\end{equation*}
so one may get an $E^{2}$ component as long as the conditional measure along $E^{3}$ is non-trivial (this can be verified say by an entropy assumption, assuming $h_{\mu}(a_{-t})>2$).
By the measure rigidity theorem, this shows that  any $a_{t}$-invariant measure with absolutely continuous conditional measure over $E^{1}$ must be horospherical.
We remark here that such an entropy assumption would force the measure to be the Haar measure in view of Ratner's theorem, as because of the conditional measure along the $E^{2}$-direction, the measure cannot interact with any proper tube, but nevertheless the usage of the measure rigidity theorem suppresses the usage of the linearization technique in that case.
\end{ex}

The following example provides an example of HQNI system.
\begin{ex}\label{ex:borel-smale}
We will consider $G=\mathbb{R} \rtimes (N \times N)$ as a Lie group, where $N$ is the $3$-dimensional Heisenberg group.

We identify the Heisenberg group with the triplets $(x,y,z)$ such that $[x,y]=z$, and so we may identify $N\times N$ with the six-dimensional space which we write as $$v=\begin{pmatrix} x_{1} &x_{2} \\ y_{1} &y_{2} \\ z_{1} &z_{2} \end{pmatrix}.$$

We we will consider the Borel-Smale Anosov example (c.f.~\cite[Section $3$]{wilki}\cite{LAURET2003201}).
This is a linear automorphism $A$ of $N\times N/\Gamma$ which is Anosov, such that
\begin{equation*}
    \begin{split}
        A.x_{1} = \lambda^{a}.x_{1}, &A.x_{2}=\lambda^{-a}.x_{2}, \\
        A.y_{1}=\lambda^{b}.y_{1}, &A.y_{2}=\lambda^{-b}.y_{2}, \\
        A.z_{1}=\lambda^{a+b}.z_{1}, &A.z_{2}=\lambda^{-a-b}.z_{2},
    \end{split}
\end{equation*}
where
$$A.v=\begin{pmatrix} A &0 \\0 &1\end{pmatrix}.v =\begin{pmatrix} A(x_{1} &x_{2}) \\ A(y_{1} &y_{2}) \\ A(z_{1} &z_{2}) \end{pmatrix}.$$ 

Picking $a=3,b=-2$, one may see that we have the following Lyapunov spectrum
\begin{equation}\label{eq:Anosov-spec}
    3\log\lambda>2\log\lambda>\log\lambda>-\log\lambda>-2\log\lambda>-3\log\lambda.
\end{equation}
The associated unstable and stable manifolds are defined by $$\log W^{u} =~\text{span}\left\{x_{1},y_{2},z_{1} \right\}\text{ and } \log W^{s}=\text{span}\left\{x_{2},y_{1},z_{2} \right\}.$$
Moreover, every generalized $u$-Gibbs measure will have entropy greater or equal to $5\log\lambda$.

Consider the group $G=\mathbb{R}\rtimes (N\times N)$.
Let $M$ be the solvemanifold $G/\Gamma$ for some $\Gamma\leq G$ a lattice. 
As the Laypunov exponent associated to $x_{2},z_{1}$ only add up to $-4\log\lambda$, by an entropy calculation, at-least the two most negative Lyapunov spaces admit a non-zero conditionals.
If so, we see that one may found non-trivial  $y_{1}$ coordinate.
In particular, by taking commutators, one can see that we have QNI as one may recover the $z_1$-coordinate, with a quadratic estimate.
Using the theorem we conclude that such a measure must be SRB measure which here amounts to the Haar measure.
\end{ex}

We continue developing the previous example, giving rise to a non-algebraic HQNI flow by sufficiently small perturbations of the example above along the $z_{1},z_{2}$ directions.
\begin{ex}\label{ex:borel-smale-pert}
Consider the space $(N\times N)/\Gamma$ -  the product space of the Heisenberg groups from the previous example.
Notice that the map $A$ keep the subspace spanned by $z_{1},z_{2}$ fixed.
As a result, we may realize this system as a vector bundle over $\mathbb{T}^{2} \times \mathbb{T}^{2}$.
We will deform the map over the fiber in the $C^{\infty}$-topology.

We define a small perturbation map $\tilde{B}$ over the fiber as follows
\begin{equation}
    \tilde{B}(v) = \begin{pmatrix} (x_{1}, x_{2}).Id \\
    (y_{1}, y_{2}).Id \\
    (z_{1}, z_{2}).\tilde B\end{pmatrix},
\end{equation}
where $\tilde B$ is a small perturbation of the identity map acting on the fiber which is the so-called vertical torus defined by $(z_{1},z_{2})$ in the nilmanifold.
This leads to the action of
$$A\tilde{B}(v) = \begin{pmatrix} A.\begin{pmatrix} x_{1} &x_{2} \\ y_{1} &y_{2} \end{pmatrix}.Id \\
A(z_{1} , z_{2}).\tilde{B}\end{pmatrix}=\tilde{B}A.(v).$$
Using the stability of Anosov maps, the perturbed map over the fiber, $A.\tilde B$ is also Anosov.

As this is an Anosov map over the $2$-torus, using the entropy formula and Yomdin's theorem regarding continuity of the entropy, we see that the eigenvalues associated to $A.\tilde{B}$ are varying continuously with respect to eigenvalues of $A\mid_{z_1,z_2}$.
As a result, we have the following Lyapunov spectrum of $A\tilde{B}$ in analogy with~\eqref{eq:Anosov-spec}
\begin{equation}
    3\log\lambda>2\log\lambda>\log\mu_{\tilde{B}}>-\log\mu_{\tilde{B}}>-2\log\lambda>-3\log\lambda.
\end{equation}
assuming the perturbation is small enough.
Considering the commutator argument from the previous example, as it pertained only to horizontal torus, we still have a quadratic $z_1$ deviation, say of size greater or equal to $\varepsilon^{2}$ where $z_1$ is the direction of the non-perturbed system.
In the perturbed system, the vertical torus is foliated by new curves, amounting to $\tilde{B}$, which are $C^{1}$-close (say of distance at most $\Theta$) to $z_1,z_{2}$.
We may estimate the divergence of the foliations from the previous coordinates by
$\Theta\cdot\varepsilon^2$.
Hence as long as $\Theta<1/2$ we still salvage a quadratic deviation in the new least-positive Lyapunov direction of the unstable leaf.
\end{ex}

\begin{rem}
The previous procedure may be applied in the context of Example~\ref{ex:asl2} as well.
One may realized this space as a torus bundle over $SL_{2}/\Gamma$, where the induced action over the fiber is given by the diagonal flow.
Perturbing the fiber dynamics, still yields an Anosov flow. As we had an entropy gap, small perturbation would retain this property and keep the basis' foliation fixed.
As such, similar considerations applied and one may see that as the new foliations over the fiber are $C^{1}$-close to the axis, one may recover the $HQNI$ property as well.
\end{rem}

\begin{ex}\label{ex:SL3}
Let $G=SL_{3}(\mathbb{R})$ and $\Gamma\leq G$ a \emph{uniform lattice}. Consider the action over the homogeneous space $X=G/\Gamma$ of the split Cartan element
\begin{equation*}
    a_t=\exp\left(t\cdot \begin{pmatrix} 
2 & 0 & 0 \\
0 & 1 & 0 \\
0 & 0 & -3
\end{pmatrix} \right).
\end{equation*}
We clearly have
\begin{equation*}
\begin{split}
    a_{t}. \begin{pmatrix} 
1 & x & z \\
0 & 1 & y \\
0 & 0 & 1
\end{pmatrix}. a_{-t}  = \begin{pmatrix} 
1 & e^{t}\cdot x & e^{5t}\cdot z \\
0 & 1 & e^{4t}\cdot y \\
0 & 0 & 1
\end{pmatrix}
\end{split}.
\end{equation*}
Hence one can see that the unstable manifold of the flow can be identified with $N=\exp(\underline{N})$ where $\underline{N}=span\left\{\underline{x},\underline{y},\underline{z}\right\}$ with $\underline{x}=\left(\begin{smallmatrix} 0 & \star & 0\\ 0 & 0 & 0\\ 0& 0 &0 \end{smallmatrix}\right)$, $\underline{y}=\left(\begin{smallmatrix} 0 & 0 & 0\\ 0 & 0 & \star\\ 0& 0 &0 \end{smallmatrix}\right)$ and $\underline{z}=\left(\begin{smallmatrix} 0 & 0 & \star\\ 0 & 0 & 0\\ 0& 0 &0 \end{smallmatrix}\right)$.
Moreover, we have that $E^{1}=span\left\{\underline{z} \right\}$, $E^{2}=span\left\{\underline y \right\}$ and $E^{3} = span\left\{\underline z \right\}$.
Moreover, we see the following quantitative non-integrability property:
for $g$ and $g'=s.g$ we have
$Proj_{s}^{g \mapsto g'}(E^{\leq 2}) = Ad(s).(E^{1}+E^{2})$ which must have an $\underline x$ component as we can do the following computation in the Lie algebra level
\begin{equation*}
    \begin{split}
    \left[
        \begin{pmatrix}
        0 & 0 & 0\\
        s_{1} & 0 & 0 \\
        s_{2} & s_{3} & 0
        \end{pmatrix}, \begin{pmatrix}
        0 & 0 &  z\\
        0 & 0 &  y \\
        0 & 0 & 0
        \end{pmatrix}
        \right] = 
        \begin{pmatrix}
        -s_{2}\cdot z & -s_{3}\cdot z & 0 \\
        -s_{2} \cdot z & -s_{3} \cdot y & s_{1}\cdot z \\
        0 & 0 & s_{2}\cdot z +s_{3}\cdot y
        \end{pmatrix}
    \end{split}
\end{equation*}
So one may see that the projection of this element, over $\underline{N}$ is the nilpotent matrix
\begin{equation*}
    \begin{pmatrix}
    0 & -s_{3}\cdot z & 0 \\
    0 & 0 & s_{1}\cdot z \\
    0 & 0 & 0
    \end{pmatrix} = -s_{3}\cdot z \cdot \underline{x} + s_{1}\cdot z \underline{y}.
\end{equation*}
As $\underline{y} \in E^{1}+E^{2}$, we get that the deviation from $E^{1}+E^{2}$ in $\underline{N}$ is reflected by $-s_{3}\cdot z \cdot \underline{x}$.
We note that $s_{3}$ is comparable the distance between $g,g'$ (chosen in a generic way).
The conclusion of our theorem will say that any probability measure $\mu$ which is ergodic and invariant under $L$ where $L=\langle a_{t},b_{t},\exp(t\cdot\underline{y}),\exp(t\cdot\underline{z}) \rangle$ must be invariant under all of $N$.
Where $b_{t}$ is any other Cartan element which is not a power of $a_{t}$.

In that case it is well known (i.e.~\cite[Theorem~$1.11$]{katz}) that measures which are invariant under the horospherical flow associated to $a_{t}$ (namely SRB), must be the Haar measure, with a quantitative and effective equidistribution statement.
Nevertheless, our result gives a related result (the assumption about the $b_{t}$-invariance is used only as the construction of $z$ in the factorization theorem, might have a $b_{t}$ component involved in the definition of $x_1$, which we can remove if we assume $b_{t}$-invariance and in particular, we do not need to assume $b_{t}$-ergodicity say).
\end{ex}
A related situation can be demonstrated in the case of taking \\ $G=~SL_{2}(\mathbb{R}) \times~ SL_{2}(\mathbb{R})$ when one take quotient by a irreducible lattice $\Gamma\leq G$, when one can choose a Cartan element say of the form
\begin{equation*}
    a_{t}=\left(\exp\left(t\cdot \begin{pmatrix} 
2 &  0 \\
0 & -2 
\end{pmatrix} \right), \exp\left(t\cdot \begin{pmatrix} 
1 & 0 \\
0 & -1 
\end{pmatrix} \right)\right).
\end{equation*}
In that case, the expanding space is $2$-dimensional, with the fast subspace corresponding to the first upper unipotent subgroup, and the second expanding subspace corresponding to the second upper unipotent subgroup.

We may deduce a result in the spirit of a theorem of Kleinbock-Shi-Weiss~\cite{Kleinbock-Shi-Weiss} as a corollary - 
\begin{ex}
Let $G=PSL_{3}(\mathbb{R})$ and $\Gamma \leq G$ a \emph{uniform lattice}.
Define $a_t=\exp\left(t\cdot \begin{pmatrix} 
2 & 0 & 0 \\
0 & 1 & 0 \\
0 & 0 & -3
\end{pmatrix} \right)$, $b_t=\exp\left(t\cdot \begin{pmatrix} 
-3 & 0 & 0 \\
0 & 2 & 0 \\
0 & 0 & 1
\end{pmatrix} \right)$  and let  $u(x,y,z)= \begin{pmatrix}
    1       & x & y \\
    0       & 1 & z\\
    0       & 0 & 1 
\end{pmatrix}$.
Then $U=\langle u(x,y,z)\rangle$ is the unstable horospherical subgroup $G_{a}^{-}$, namely $$\langle u(x,y,z)\rangle=\left\{ g\in G \mid a_{t}ga_{-t} \to e \text{ as $t\to -\infty$} \right\}.$$
Let $\rho(y,z)dydz$ be an absolutely continuous finite density with respect to the Riemannian measure of the the embedded submanifold $W^{\geq 2} ~=~\exp\begin{pmatrix} 
0 & 0 & \underline{y} \\
0 & 0 & \underline{z} \\
0 & 0 & 0
\end{pmatrix}$.
Define the measure $\nu$ as any weak-$\star$ limit of $\frac{1}{T}\int_{t=0}^{T}b_{t}.\rho(y,z)dydz$.
Then \emph{for every $x\in G/\Gamma$}, for $\nu$ almost-every $h$, we have that
$$\frac{1}{T}\int_{t=0}^{T}f(a_{t}.h.x)dt \to \int_{G/\Gamma}fdm_{\text{Haar}}.$$
\end{ex}
The proof follows at once from Example~\ref{ex:SL3}. One should note that it is not always the case that the weak-$\star$ limits $\nu$ are actually non-zero measures.
A similar situation occurs in the case of $SL_{2}(\mathbb{R})\times SL_{2}(\mathbb{R})/\Gamma$.
Again those kind of results can be easily deduced by known techniques in homogeneous dynamics such as Ratner theorems or the work by Einsiedler-Katok-Lindenstrauss and its generalizations~\cite[Theorem~$1.3$]{el-tori}, but the proof presented here is simpler in nature and doesn't use those explicit techniques.

\subsection{Examples related to $3D$ Anosov diffeomorphisms}
Recall that a diffeomorphism of a manifold $M$ is called \emph{partially hyperbolic} if for every $x\in M$ there exists a splitting of the tangent bundle to $M$ at $x$, $T_{x}M$ to three sub-bundles $$T_{x}M=E^{s}(x)\oplus E^{c}(x)\oplus E^{u}(x)$$ such that the splitting is dominated, $E^{s}$ is uniformly contracted, $E^{u}$ is uniformly expanded and at-least one of them is non-trivial.

Consider a smooth partially hyperbolic diffeomorphism $f$ of the $3$-tours $\mathbb{T}^3$.
In that case, every sub-bundle $E^{s}, E^{c}, E^{u}$ is one-dimensional.
We further assume that $\lambda_{1}<1<\lambda_{2}<\lambda_{3}$.

The following is an upcoming result of Avila-Crovisier-Eskin-Potrie-Wilkinson-Zhang~\cite{ACEPWZ}:

\begin{thm}: If $f:\mathbb{T}^3\to \mathbb{T}^3$ is Anosov and partially hyperbolic, with a two dimensional unstable foliation, then either  the system $(\mathbb{T}^3,f,\mu)$ satisfies the QNI condition for any measure $\mu$ which is a generalized $u$-Gibbs state with respect to the dominated splitting $E^{c}\oplus E^{u}$, or the system $(\mathbb{T}^{3},f)$ is conjugate to a linear system $(\mathbb{T}^{3},\tilde{f})$  , where the conjugacy maps the strong unstable leaf to the strong linear foliation.
\end{thm}

\subsection{Equidistribution in HQNI systems}
We now turn to prove a version of ``spherical equidistribution'' as stated in Theorem~\ref{thm:equi}, motivated by an analogous result by Chaika-Eskin~\cite{chaika-eskin} in the moduli space settings.

We note this kind of statement is stronger than a construction of $u$-Gibbs measures given by Pesin and Sinai~\cite{PesinSinai1982} to construct $SRB$ measures as they are averaging absolutely-continuous densities along $W^{u}$ and not just $W^{uu}$.

The main example to keep in mind is the Borel-Smale one, Example~\ref{ex:borel-smale}.

Let $\phi:~M\to~\mathbb{R}$ be a Lipschitz test function.
As the Oseledets splitting was assumed to be dominated, we may extend the Lyapunov exponents continuously to the whole manifold.
We note that for any $h\in E^{j}\subset \oplus_{i>1}E^{\lambda_i}(x)\simeq \mathbb{R}^{n}$ we have the following cocycle (coming from dominated splitting)
\begin{equation}\label{eq:cocycle-def}
    g_{t}.h=e^{\lambda_{j}(x,t)}.h.
\end{equation}
Moreover, the various cocycles $\lambda_{j}(x,t)$ grow linearly in $t$.

\begin{prop}\label{prop:law-large-num}
Fix some $h\in E^{j}$, define 
\begin{equation*}
    f_{t}(u)=\phi(g_{t}.(u.x)) - \phi(h.g_{t}.u.x),
\end{equation*}
where both points $g_{t}.(u.x), h.g_{t}.u.x$ belong to $W^{>1}_{\text{loc}}(g_{t}.x)$ and we interpret the $h$-translation by means of the normal forms coordinates.
Then for almost every $u$ we have
\begin{equation*}
    \lim_{T\to\infty}\frac{1}{T}\int_{t=0}^{T}f_{t}(u)dt = 0.
\end{equation*}
\end{prop}
This proposition implies that for a generic $u$, the measures constructed in the Theorem are $h$-invariant. Using the proposition repeatedly for various $h\in E^{j}$, we get that the measure are generalized $u$-Gibbs measures.

We will need the following auxiliary Lemma.
\begin{lem}
There exists some $\kappa>0$ and $C\geq 1$ such that for any $t>s>0$ we have
\begin{equation}\label{eq:corr-bound}
   \left\lvert \int_{u\in[0,1]}f_{t}(u)f_{s}(u)du \right\rvert \leq C\cdot e^{-\kappa\lvert t-s\rvert}.
\end{equation}
\end{lem}
\begin{proof}

Let $r>0$ which will be specified later.
Consider intervals of the form $\mathfrak{I}(i)=\left[i-e^{-r},i+e^{-r} \right]$.
We divide $[0,1]$ (which we think about as part of $E^{j}$) into disjoint consecutive intervals of the form $\mathfrak{I}(i)$.
Note that as $\phi$ is bounded, $\text{length}(\mathfrak{I}(i))\leq~2e^{-r}$, the last part of $[0,1]$ not of the form $\mathfrak{I}(i)$ does not interfere with the estimate, as long as $r$ is small enough.
Now as $\phi$ is a Lipschitz function, we have that
\begin{align*}
    \left\lvert f_{s}(y+i) -f_{s}(i)\right\rvert \ll_{\phi} e^{\lambda_{j}(x,s)}\cdot \lvert y \rvert.
    \end{align*}
Hence for for any $\lvert y \rvert \leq e^{-r}$ we have that

$$\lvert f_{s}(y+u)-f_{s}(u)\rvert \ll_{\phi} e^{-r+\cdot \lambda_{j}(x,s)},$$ as
\begin{equation*}
\begin{split}
    \lvert f_{s}(y+u)-f_{s}(u)\rvert &= \lvert \phi(g_{s}.(y+u))-\phi(g_{s}.u)\rvert \\
    &= \lvert \phi(\left( e^{\lambda_{j}(x,s)}\cdot y\right)+g_{s}.u)-\phi(g_{s}.u) \rvert \\
    &\ll_{\phi} \left\lVert e^{\lambda_{j}(x,s)}\cdot y \right\rVert \\
    &\ll_{\phi} e^{-r+\lambda_{j}(x,s)},
\end{split}    
\end{equation*}
and $\phi$ is a Lipschitz function.
Therefore we may estimate the correlation over any interval $\mathfrak{I}(i)$ as
\begin{equation}\label{eq:mathfrakI(i)-corr}
    \left\lvert \frac{1}{\lvert \mathfrak{I}(i)\rvert}\int_{u\in \mathfrak{I}(i)}f_{t}(u)f_{s}(u)du\right\rvert = \frac{\left\lvert f_{s}(i)\right\rvert}{\lvert \mathfrak{I}(i)\rvert}\left\lvert\int_{u\in \mathfrak{I}(i)}f_{t}(u)du\right\rvert+O_{f}\left(e^{-\lvert t-s \rvert}\right).
\end{equation}
Using the definition of $f_{t}$ we have that
\begin{equation*}
    \frac{1}{\lvert \mathfrak{I}(i)\rvert}\int_{u\in \mathfrak{I}(i)}f_{t}(u)du = \frac{1}{\lvert \mathfrak{I}(i) \rvert}\int_{u\in \mathfrak{I}(i)}\phi(g_{t}.u.x)-\phi(g_{t}h_{e^{-\lambda_{j}(x,t)}}.u.x)du.
\end{equation*}
Hence one see that there is cancellation in the interval except a neighborhood of the boundary of $\mathfrak{I}(i)$ which is of measure $2e^{-\lambda_{j}(x,t)}$ 

So we may bounded the integral over $\mathfrak{I}(i)$ as
\begin{align*}
    \left\lvert\frac{1}{\lvert\mathfrak{I}(i)\rvert}\int_{u\in \mathfrak{I}(i)}f_{t}(u)du \right\rvert &\ll_{f} e^{-\lambda_{j}(x,t)} \cdot \lvert \mathfrak{I}(i)\rvert^{-1}  \\
    &= e^{-\lambda_{j}(x,t)+r}. 
\end{align*}
Using that in~\eqref{eq:mathfrakI(i)-corr} give
\begin{equation*}
    \left\rvert \int_{u\in \mathfrak{I}(i)}f_{t}(u)f_{s}(u)du \right\rvert \ll_{f} e^{-\lambda_{j}(x,t)+r} +e^{-r+\lambda_{j}(x,s)} .
\end{equation*}
Now approximating the integral over the whole interval by the localized integrals give
\begin{equation}\label{eq:estimate-corelation}
\begin{split}
    \int_{u\in [0,1]}f_{t}(u)f_{s}(u)du &= \sum_{\mathfrak{I}(i)}\int_{\mathfrak{I}(i)}f_{s}(u)f_{t}(u)du + O_{f}\left(\length(\mathfrak{I}(i))^{-1}\right) \\
    &\ll_{f} e^{-\lambda_{j}(x,t)+r} +e^{-r+\lambda_{j}(x,s)} + e^{-r}.
\end{split}
\end{equation}
Notice we have the following cocycle property of $\lambda_{j}$:
\begin{equation}\label{eq:cocycle-prop}
    \lambda_{j}(x,t)=\lambda_{j}(x,s)+\lambda_{j}(y,t-s),
\end{equation}
where $y=g_{s}.x$.
We may rewrite the first summand of ~\eqref{eq:estimate-corelation} as
\begin{equation*}
    e^{-\lambda_{j}(x,t)+r}=e^{-\lambda_{j}(x,s)+r-\lambda_{j}(y,t-s)}.
\end{equation*}
Taking $r$ so that
$r=\lambda_{j}(x,s)+\frac{1}{2}\lambda_{j}(y,t-s)$ yields decay rate of $e^{-\frac{1}{2}\lambda_{j}(y,t-s)}$ in the above estimate.
Define $m=\min_{z\in M}\lambda_{j}(z,1)>0$.
Repeatedly applying the cocycle property~\eqref{eq:cocycle-prop} gives
\begin{equation*}
    \lambda_{j}(y,t-s) \geq m\cdot \lvert t-s\rvert.
\end{equation*}
Therefor taking any $\kappa\leq \frac{m}{2}$ would do.

The proof now follows by using Fubini's theorem over $[0,1]^{m}$ and specializing to the $E^{j}$-axis.
\end{proof}
\begin{rem}
The only usage in the identification of $W^{>m}$ with $\mathbb{R}^{n}$ here comes from the splitting process and calculating the change between the various sub-intervals $\mathfrak{I}(i)$.
Using normal forms coordinates, and the maps $H_{x}^{-1}:TW^{>m} \to TW^{>m}$ one may do a similar calculation (with a more complicated boundary estimate, but this estimate would not have a significant impact over the proof).
\end{rem}

We quote here the following law of large numbers:
\begin{lem}[\cite{chaika-eskin} Lemma~$3.4$]
Suppose $f_{t}:[0,1]\to\mathbb{R}$ are bounded functions satisfying~\eqref{eq:corr-bound} and also $f_{t}$ are $2M$-Lipschitz for any $t$, then Proposition~\ref{prop:law-large-num} holds.
\end{lem}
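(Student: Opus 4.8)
This is the standard variance--summation route to a strong law of large numbers, so the plan is soft and uses only the two hypotheses at hand. Write $F_{T}(u)=\frac{1}{T}\int_{0}^{T}f_{t}(u)\,dt$; the goal is to prove that $F_{T}(u)\to 0$ for almost every $u\in[0,1]$.

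First I would record the $L^{2}$ decay of $F_{T}$. Expanding the square, applying Fubini, and inserting the correlation bound~\eqref{eq:corr-bound} gives
\begin{equation*}
\int_{0}^{1}F_{T}(u)^{2}\,du=\frac{1}{T^{2}}\int_{0}^{T}\int_{0}^{T}\left(\int_{0}^{1}f_{t}(u)f_{s}(u)\,du\right)dt\,ds\leq\frac{1}{T^{2}}\int_{0}^{T}\int_{0}^{T}C\,e^{-\gamma\lvert t-s\rvert}\,dt\,ds\ll\frac{1}{T},
\end{equation*}
since $\int_{0}^{T}e^{-\gamma\lvert t-s\rvert}\,ds\leq 2/\gamma$ for every $s$. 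Thus $\norm{F_{T}}_{L^{2}([0,1])}^{2}=O(1/T)$. Next I would extract a uniform sup bound on the functions $f_{t}$ from the hypotheses: taking $t=s$ in~\eqref{eq:corr-bound} bounds $\norm{f_{t}}_{L^{2}([0,1])}$ by $\sqrt{C}$, and a $2M$-Lipschitz function on the unit interval whose $L^{2}$-norm is at most $\sqrt{C}$ has oscillation at most $2M$, hence sup norm at most $\sqrt{C}+2M=:B$, uniformly in $t$. This is the only place the Lipschitz hypothesis is used.

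Since the rate $O(1/T)$ is not summable over all integers, I would restrict to the sparse sequence $T_{n}=n^{2}$: then $\sum_{n\geq 1}\int_{0}^{1}F_{T_{n}}(u)^{2}\,du\ll\sum_{n}n^{-2}<\infty$, so by the monotone convergence theorem $\sum_{n}F_{T_{n}}(u)^{2}<\infty$ for almost every $u$, and in particular $F_{T_{n}}(u)\to 0$ almost everywhere. Finally, for a general $T\in[T_{n},T_{n+1}]$ I would write
\begin{equation*}
F_{T}(u)=\frac{T_{n}}{T}\,F_{T_{n}}(u)+\frac{1}{T}\int_{T_{n}}^{T}f_{t}(u)\,dt,
\end{equation*}
where the first term tends to $0$ almost everywhere by the previous step (as $T_{n}/T\leq 1$) and the second is bounded in absolute value by $B\,(T_{n+1}-T_{n})/T_{n}=B\,(2n+1)/n^{2}$, which tends to $0$ uniformly in $u$ and in $T\in[T_{n},T_{n+1}]$. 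Adding the two estimates yields $F_{T}(u)\to 0$ as $T\to\infty$ for almost every $u$, which is exactly the conclusion of Proposition~\ref{prop:law-large-num}.

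None of the steps is a genuine obstacle; the only point that needs care is the interaction between the $L^{2}$ rate and the choice of subsequence. Since the rate is precisely $1/T$, the square-spaced times $T_{n}=n^{2}$ are the natural choice, and the uniform bound $B$ does the gap-filling between consecutive $T_{n}$. (This is precisely~\cite[Lemma~3.4]{chaika-eskin}, whose proof the sketch above reproduces.)
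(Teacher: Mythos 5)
Your proof is correct and reproduces the standard Chebyshev/Borel--Cantelli argument that underlies the cited Chaika--Eskin lemma; the paper itself offers no independent proof, only the citation, so there is nothing to contrast. The three ingredients you use — the $O(1/T)$ bound on $\int_0^1 F_T^2$ coming from integrating $e^{-\gamma\lvert t-s\rvert}$, summability along $T_n=n^2$, and gap-filling via a uniform sup bound on $f_t$ (which you correctly extract from the $t=s$ case of~\eqref{eq:corr-bound} together with the $2M$-Lipschitz hypothesis) — are exactly what the cited lemma uses, and your argument is also consistent with the paper's remark that the mean-zero assumption in Chaika--Eskin's original statement is never needed.
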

We remark here that Eskin-Chaika assumed in addition that the integrals over $[0,1]$ of $f_{t}$ vanish. This is not used in any part of their argument.

We conclude that for almost every $u$, using the result of Chaika-Eskin, the measures induced from taking weak-$\star$ limits in~\eqref{eq:empherical-def} are absolutely continuous over the foliation given by $W^{\geq m}$.
Furthermore, by their definition as averaging along $g_{t}$, any weak-$\star$ limit will be $g_{t}$-invariant.
\begin{obs}
The set of $u$-Gibbs states is a weak-$\star$ closed convex subset of $g_{t}$-invariant probability measures over $M$.
\end{obs}
For proof see~\cite[\S11]{bonatti2006dynamics}.

In view of the HQNI property of the manifold, we have the following
\begin{obs}
Any ergodic component of the limiting measure $\eta$ measure achieved as above must satisfy QNI.
\end{obs}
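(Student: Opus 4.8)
The plan is to reduce, via the ergodic decomposition, to the very definition of an HQNI flow. Write $\eta = \int_\Omega \eta_\omega\, d\mathbb{P}(\omega)$ for the ergodic decomposition of $\eta$ with respect to $\{g_t\}$, so each $\eta_\omega$ is a $g_t$-invariant ergodic probability measure on $M$. Since $M$ is four-dimensional and carries a dominant Lyapunov splitting, the unstable sub-bundles $E^1$ and $E^2$ are both lines, so every ergodic $g_t$-invariant measure has a simple least positive Lyapunov exponent automatically; thus, to know that $\eta_\omega$ is a generalized $u$-Gibbs state it is enough to check that its conditional measures along $W^{uu}$ are absolutely continuous, and then the definition of HQNI applied to $\eta_\omega$ immediately yields QNI.

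To obtain this absolute continuity for $\mathbb{P}$-a.e. $\eta_\omega$, I would combine two inputs. First, $\eta$ itself is a $u$-Gibbs state: this is the content of the preceding discussion --- Proposition~\ref{prop:law-large-num} together with the fact that the set of $u$-Gibbs states is weak-$\star$ closed and convex --- so $\eta^{uu}_x \ll m_{uu}$ for $\eta$-a.e. $x$. Second, $\eta$-almost every $W^{uu}$-plaque lies inside a single ergodic component: if $y \in W^{uu}(x) \subset W^u(x)$ then $\dist(g_{-t}x, g_{-t}y) \to 0$, hence the backward Birkhoff averages $\frac1T\int_0^T f(g_{-s}x)\,ds$ and $\frac1T\int_0^T f(g_{-s}y)\,ds$ share the same limit for every $f \in C(M)$; running over a countable dense family of $f$ shows $x$ and $y$ are in the same ergodic component whenever both are backward generic, and since the non-generic set is $\eta$-null and $\eta$ disintegrates along the $W^{uu}$-foliation into the measures $\eta^{uu}_x$, this plaque-wise statement holds along $\eta$-a.e. plaque for $\eta^{uu}_x$-a.e. point. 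Therefore the partition into $W^{uu}$-plaques refines the $\sigma$-algebra of $g_t$-invariant sets modulo $\eta$-null sets, and by the transitivity of conditional measures (see \cite[\S5]{einsiedler-lindenstrauss}) the $W^{uu}$-conditional measures of $\eta_\omega$ agree $\eta$-a.e. with those of $\eta$; in particular they are absolutely continuous, so $\mathbb{P}$-a.e. $\eta_\omega$ is a generalized $u$-Gibbs state.

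Invoking the defining property of an HQNI flow for each such $\eta_\omega$ then gives that $\eta_\omega$ satisfies QNI, which is the assertion. I expect the only genuinely delicate point to be the bookkeeping in the middle step --- tracking the null sets through the $W^{uu}$-disintegration of $\eta$ so that the ``plaque inside a component'' statement holds in the right mod-$0$ sense and can be fed into the transitivity of conditional measures. Everything else is a direct appeal to the hypotheses and to results already established above.
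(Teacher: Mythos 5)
Your argument is correct. The paper states this observation without proof, simply asserting it follows ``in view of the HQNI property,'' so the real content you supply is the missing link: that each ergodic component $\eta_\omega$ of the $u$-Gibbs state $\eta$ is itself a generalized $u$-Gibbs state. Your route --- backward asymptoticity of points on a common unstable leaf forcing $\eta$-a.e.\ $W^{uu}$-plaque to sit mod-null in a single ergodic component, followed by transitivity of conditional measures to identify $(\eta_\omega)^{uu}_x$ with $\eta^{uu}_x$, together with the automatic simplicity of the least positive exponent in the four-dimensional dominated-splitting setting --- is the standard mechanism and is exactly what the paper's one-line appeal to HQNI presupposes.
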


Given such a limiting measure $\eta$, we may consider its ergodic decomposition $d\eta=\int d\eta^{\varepsilon}$. 
Using the measure classification result of Theorem~\ref{thm:measure-classification}, we have that that each ergodic component $\eta^{\varepsilon}$ must be SRB.
As we assume that there is only one SRB measure, $\nu$, we have that $\eta^{\epsilon}=\nu$ for all the ergodic components $\eta^{\epsilon}$.
Hence the theorem follow.

\begin{rem}
The above proof (and construction showing HQNI and the example about $ASL_{2}$) suggests that even in the case where the flow does not satisfy QNI, if for limiting measure $\eta$ has large enough entropy, using the affinity of entropy over the ergodic decomposition, one may show that at-least one of its ergodic components satisfy QNI (we note here that the set of ergodic $u$-Gibbs states is countable), hence such a measure $\eta$ must have a basin of an SRB measure inside its support, recovering a topological analogue.
It will be interesting to continue developing Margulis' inequality techniques as in~\cite[Proposition~$2.13$]{emm} to replace the linearization technique in the analysis of the various ergodic components of the limiting measure.
\end{rem}

\newpage
\appendix

\section{Technical construction of factorization}\label{app:factorization-details}
\subsection*{Basic facts and definitions}

We assume throughout that $V$ is a finite-dimensional vector bundle over $M$, may be measurable, which is smooth along stable leaves.
Moreover, we fix some $x\in M$ and $y\in~W^{s}(x)$.
We will assume that $x$ is generic, in light of~\cite[Theorem~$2.5$]{kalinin}, we may endow $T(W^{s}(x))=E^{s}(x)$ with \emph{normal forms coordinates}.
If $y\in~W^{s}(x)$ is sufficiently close to $x$ we can write $y=exp_{x}(\underline{y})$ for some vector $\underline{y} \in E^{s}(x)=T(W^{s}(x))$.
Using the normal forms coordinates, we may calculate $g_{\ell}.y$ by calculating the action of $g_{\ell}.\underline{y} \in E^{s}(g_{\ell}.x)$ by of sub-resonant polynomials.
Throughout the section we will write $g_{\ell}.y$ as the result of this computation.
We note that this coordinates immediately generalize to higher tangent spaces, $T^{k}(E^{s}(x))$, for all $k\geq 1$.

\begin{defn}\label{defn:op-factorization}
Assume $V$ is a finite dimensional vector bundle over $M$. 
Let $x\in M$ be given and $y\in W^{s}(x)$.
Assume that $T(y):~V(x)\to~V(y)$ is a family of linear operators. Moreover we assume that $x,y$ both belong to the same local trivialization of $V$.

We say that $T$ is \emph{factorizable} if for any $\beta>0$ there exists a vector bundle $V_{ext}$, a map $F_{x}:W^{s}(x)\to V_{ext}(x)$  

and a linear map \\ 
$A(x,\ell):~V_{ext}(x)\to~Hom(V(g_{\ell}.x),V(g_{\ell}.x))$,
$A=A(T)$, 
 such that 
\begin{equation}
    \left\lVert \xi \circ T(y)-A(x,\ell)F_{x}(y) \right\rVert_{V(g_{\ell}.x),op} \ll e^{-\beta\cdot\ell},
\end{equation}
provided that $g_{\ell}.x,g_{\ell}.y$ lie in the same local trivialization, so we can do the following identification of vector spaces $V(g_{\ell}.x)\simeq V(g_{\ell}.y)$ by means of the isomorphism $\xi$.
\end{defn}

\begin{thm}[Smooth holonomy map is factorizable]\label{thm:hol-factorization}
Assume that $V$ is a vector bundle over $M$ which has \emph{smooth} stable holonomy map $H_{V}(x,y):V(x)\to V(y)$, then $H_{V}$ is factorizable.
\end{thm}
\begin{proof}
As the holonomy is smooth, let $A(x,\ell)$ be the \emph{prolongation} of $H_{V}(g_{\ell}.x,\star)$, of some order $k$ which will be determined later, as a function with domain being $W^{s}(g_{\ell}.x)$.
Using Taylor expansion over $E^{s}(g_{\ell}.x)$ we may write
\begin{equation}
    H_{v}(g_{\ell}.x,z) = \sum_{0\leq \lvert \alpha \rvert \leq k}\partial^{\alpha}H_{V}(g_{\ell}.x,g_{\ell}.x)\cdot \log(z)^{\alpha}+E_{g_{\ell}.x,k}(z)
\end{equation}
where $\log(z)\in E^{s}(g_{\ell}.x)\leq TM(g_{\ell}.x)$ is the inverse image of $z$ under the exponential map at $g_{\ell}.x$ (given in terms of the normal forms coordinates) and $E_{g_{\ell}.x,k}(z)$ stands for the reminder term.
Using Lagrange's reminder formula, noticing that $M$ is compact, we may bound
\begin{equation*}
    \lVert E_{g_{\ell}.x,k}(z) \rVert \ll_{M} dist(g_{\ell}.x,z)^{k+1}.
\end{equation*}
Specializing to the case of $z=g_{\ell}.y$ one gets
\begin{equation*}
\begin{split}
    \lVert E_{g_{\ell}.x,k}(z) \rVert &\ll_{M} dist(g_{\ell}.x,g_{\ell}.y)^{k+1}\\
    &\ll_{M} e^{-(k+1)\lambda_{C}\cdot\ell},
\end{split}
\end{equation*}
for some $\lambda_{C}$ which depends on the Lyapunov spectrum of $M$.
Taking $k$ large enough so that 
\begin{equation*}
    k+1>\frac{\beta}{\lambda_{C}}
\end{equation*}
yields the result, where the map $F_{x}(y)$ amounts to calculating the values of the various $\log(y)^{\alpha}$, $V_{ext}=T^{\leq k}E^{s}$ - the $k$'th higher-order tangent space to $E^{s}$, and $A(x,\ell)$ matches the $k$'th prolongation of $H_{V}(g_{\ell}.x,\star)$ with $g_{\ell}.F_{x}(y)$.
\end{proof}
\begin{defn}
Let $V$ be a finite dimensional vector bundle over $M$, and endows $V$ with the Pesin inner-product.
Assume $S\leq V$ is a subspace.
Given some $\ell>0$ we say that a basis $\mathsf{B}(g_{\ell}.x)$ of $S(g_{\ell}.x)$ is called \emph{almost orthonormal} if for any $v\in \mathsf{B}(g_{\ell}.x)$ we have $$\lvert \lVert v \rVert -1\rvert\leq o(\ell)$$ and for any two distinct vectors $v,v' \in \mathsf{B}(g_{\ell}.x)$, $$\lvert\langle v,v' \rangle\rvert \leq o(\ell).$$ 
\end{defn}

\begin{defn}\label{defn:factorization}
A subspace $S\leq V$ is \emph{factorizable} if for any $\beta>0$ there is a bundle $V_{ext}$ and a \emph{measurable} map $F_{x}:W^{s}(x)\to V_{ext}(x)$, 
and a linear map
$$A(x,\ell):V_{ext}(x)\to~\oplus_{i=1}^{\dim S}V(g_{\ell}.x),$$ $A=A(S)$ such that
\begin{equation}
    \lVert \mathfrak{s} - A(x,\ell)F_{x}(y) \rVert_{\oplus_{i=1}^{\dim S}V(g_{\ell}.x)} \ll e^{-\beta\cdot\ell},
\end{equation}
for some almost orthonormal basis $\mathfrak{s}$ of $S(g_{\ell}.y)$, where we assume $g_{\ell}.x, g_{\ell}.y$ lie in the same local trivialization.

We will extend the definition of factorization given above to say that a subspace is factorizable if it is factorizable over a subset of arbitrarily large measure of $M$.
\end{defn}

\begin{cor}[Preliminary factorization]\label{thm:basic-factorization-theorem}
Assume that $V$ is a bundle admitting smooth stable holonomies which we denote $H_{V}(x,y)$.
Assume that $S\leq~V$ is a subspace which is preserved under stable holonomies.
Then $S$ is factorizable over a subset of arbitrarily large measure of $M$.
\end{cor}
\begin{proof}
Fix $G_{S}:M\to \oplus_{i=1}^{\dim S}V$ some measurable choice of orthonormal bases for $S$ according to the Pesin inner-product.
The map $G$ has the following equivarience property $g_{\ell}.G_{S}(x):=G_{S}(g_{\ell}.x)$.
On the level of subspaces we have $$H_{V}(g_{\ell}.x,g_{\ell}.y)S(g_{\ell}.x)=S(g_{\ell}.y)$$
hence on the level of bases we get that $H_{V}(g_{\ell}.x,g_{\ell}.y).G_{S}(g_{\ell}.x)$ is a basis for $S(g_{\ell}.y)$.
Moreover, by a theorem of of Araujo-Bufetov-Filip~\cite{filip}, 
\begin{equation}\label{eq:filip-holonomy-estimate}
    \lVert H_{V}(x,y)-Id \rVert \ll dist(x,y)^{\gamma},
\end{equation}
for some $\gamma$ depending on the Lyapunov spectrum of $M$ on a set of arbitrarily large measure.
Consider this set.
Hence the basis given by $H_{v}(g_{\ell}.x,g_{\ell}.y).G_{S}(g_{\ell}.x)$ of $S(g_{\ell}.y)$ is an almost orthonormal basis.
As $H_{V}$ is smooth map, it is factorizable by Theorem~\ref{thm:hol-factorization}.
Combining the above yields the result. We remark here that the dependence in $\beta$ (namely the error of the factorization) relates to the order of the Taylor expansion of $H_{v}$ been used, while the error of the almost orthonormal basis $\mathfrak{s}$ produced is related both to $\beta$ and to the estimate regarding $\gamma$ in~\eqref{eq:filip-holonomy-estimate}.
\end{proof}

\begin{cor}[Future factorization]\label{cor:future-factorization}
Given some fixed  $\alpha>0$, define $T_{\max}=\alpha\cdot \ell$. Using the previous notations, we may factorize $g_{t}.S(g_{\ell}.y)$ in the following manner - for all  $\beta,\ell, t\leq T_{\max}(\ell)$, there exists a bundle $V_{ext}$, a map  $F_{x}:W^{s}(x)\to V_{ext}(x)$ and a linear map $A(x,\ell):V_{ext}(x)\to~\oplus_{i=1}^{\dim S}V(g_{\ell}.x)$ such that

\begin{equation}
    \lVert g_{t}.\mathfrak{s}-g_{t}A(x,\ell)F_{x}(y) \rVert_{\oplus_{i=1}^{\dim S}V(g_{t+\ell}.y)} \ll e^{-\beta\cdot \ell}, 
\end{equation} 
for some ``almost orthonormal'' basis $\mathfrak{s}$ of $S(g_{\ell}.y)$.
\end{cor}
\begin{proof}
Apply Corollary~\ref{thm:basic-factorization-theorem} with some $\beta'>0$ to be specified later.  

Consider the vector $u=\mathfrak{s}-A(x,\ell)F_{x}(y) \in \oplus_{i=1}^{\dim S}V(g_{\ell}.y)$. 

By Oseledets theorem 
$$\lVert g_{t}.u \rVert \leq e^{\lambda_{M}\cdot t}\cdot \lVert u\rVert \ll_{v} e^{\lambda_{M}\cdot T_{\max}}\cdot e^{-\beta'\cdot \ell}$$
for all $0\leq t\leq T_{\max}$, for some $\lambda_{M}>0$.

Given $\beta>0$, one may take $\beta'>0$ large enough so that  
\begin{equation*}
    \lambda_{M}\cdot T_{max}-\beta'\cdot~\ell~\leq~-\beta\cdot\ell.
\end{equation*}
With that choice of $\beta'$, we recover the required factorization. 
\end{proof}

\subsection*{The $P^{-}$ operator}

Assume $V$ is a vector bundle over $M$, smooth along stables. For a set of full measure $\hat{M}\subset M$ we have a splitting of $V$ by Oseledets theorem as  $V=\oplus_{i} V^{\lambda_{i}}$ .
\begin{defn}\label{defn:attached-bundle}
The \emph{attached bundle} to $V$, $\hat{V}$, is the (measurable) bundle $\hat{V}=\oplus_{i} V^{\leq \lambda_{i}}/V^{<\lambda_{i}}$ .
In view of Oseledets theorem, for any biregular point $x\in \hat{M}$ we have the (measurable) \emph{translating map} $i_{x}:~V(x)\to \hat{V}(x)$.
Writing the Osceledets splitting as $V(x)=\oplus_{i} V^{\lambda_{i}}$, we can write each $v\in V(x)$ as $v=\sum_{i} v^{\lambda_{i}}$ with $v^{\lambda_{i}}\in V^{\lambda_{i}}$ for all $i$.
Hence we can we may send each subspace to $V^{\lambda_{i}}$ to its associated quotient, namely $V^{\lambda_{i}} \mapsto V^{\leq \lambda_i}/V^{<\lambda_i}$. In particular for each $v^{\lambda_{i}}$ we have $$i_{x}(v^{\lambda_i})=v^{\lambda_i}+V^{<\lambda_{i}} = v^{\lambda_{i}}+\oplus_{j<i}V^{\lambda_j}.$$
\end{defn}
As each component of $\hat{V}$ has a single Lyapunov exponent, the attached bundle $\hat{V}$ admits smooth stable holonomies (c.f. \cite[Proposition~$3.4$]{avila}, \cite[Proposition~$4.2$]{kalinin-sadovskaya-singe-lyapunov}) which we denote as  $H_{\hat{V}}(x,y):\hat{V}(x)\to~\hat{V}(y)$.

\begin{defn}\label{def:P-def} Let $V$ be a finite dimensional vector bundle over $M$ which is smooth along stables. For any two points $x,y\in M$ such that $y\in W^{s}(x)$ we define the measurable operator $P^{-}_{V}(x,y):V(x)\to~V(y)$ between two biregular points $x,y$ to be the composition
\begin{equation}\label{eq:P-def}
P^{-}(x,y) = i^{-1}_{y} \circ H_{\hat{V}}(x,y) \circ i_{x},  
\end{equation}
where $i_{x}:V(x)\to \hat{V}(x)$ and $i_{y}^{-1}:\hat{V}(y)\to V(y)$ are the translating maps given by Oseledets theorem as in Definition~\ref{defn:attached-bundle}.
\end{defn}
This operator is analogous to the operator defined in \cite[\S4.2]{eskin-mirzakhani}.

A fundamental result due to F. Ledrappier~\cite[Theorem~$1$]{Ledrappier} shows that $P^{-}_{V}$ preserves any measurable equivariant subspace $S\leq V$. 

\begin{thm}[General subspace factorization theorem]\label{thm:general-factorization}
Assume $V$ is a vector bundle smooth along stables.
Let $S\leq V$ be any measurable equivariant subspace.
Then $S$ is factorizable.
\end{thm}
The factorization can be achieved by considering the equation
\begin{equation}\label{eq:P-factorization}
    S(g_{\ell}.y)=P^{-}_{V}(g_{\ell}.x,g_{\ell}.y)S(g_{\ell}.x),
\end{equation}
Which follows from Ledrappier's theorem.
We now need to show that we may factorize the operator $P^{-}_{V}(g_{\ell}.x,g_{\ell}.y)$. 
Examining the definition of $P^{-}$ in~\eqref{eq:P-def}, we see that 
\begin{equation*}
    P^{-}_{V}(g_{\ell}.x,g_{\ell}.y) = i_{g_{\ell}.y}^{-1} \circ H_{\hat{V}}(x,y)\circ i_{g_{\ell}.x}.
\end{equation*}
The term $H_{\hat{V}}(x,y)\circ i_{g_{\ell}.x}$ is factorizable (as an operator with its image in the attached bundle $\hat{V}$, which has \emph{smooth holonomies}) by Corollary~\ref{thm:basic-factorization-theorem}, as $H_{\hat{V}}$ is smooth along stables, and $i$ preserves spaces of dynamical definition.

We consider $i^{-1}_{\star}$ as a vector inside a bundle in the following manner
$$ i^{-1}_{\star}\in Hom(\hat{V}(\star),V(\star)),$$
or alternatively, one may consider $i^{-1}$ as section $M\to Hom(\hat{V}(\star),V(\star))$, hence belong to the vector space of such sections.

Unfortunately, the bundle of sections, does not necessarily admit smooth holonomies. Nevertheless, it is smooth along stables (as $V$ is). The next subsection introduces a technique based on a theorem of Brown-Eskin-Filip-Rodriguez-Hertz to overcome this issue and factorize the section $i^{-1}_{\star}$.

\subsection*{Overcoming non-existence of holonomies}
We will need to discuss cases when the vector bundle $V$ is only smooth along stables and does not necessarily admit smooth holonomies.
We note the following theorem.
\begin{thm}[Brown-Eskin-Filip-Rodriguez-Hertz~\cite{brown_eskin_filip}, \S $A.4$, Corollary $A.4.6$]\label{thm:brown-eskin-filip} Let $V$ be a bundle which is smooth along stables, there exists an embedding $j_{\star}:~V(\star)\to~V'(\star)$ such that $V'$ admits smooth stable holonomies. Moreover the embedding $j_{\star}$ is $g_{t}$-equivarient.
\end{thm}

As part of the proof of the above theorem, using \emph{cocycle normal forms}, we have the following crucial observation:
\begin{obs}[Explicit construction of cocycle normal forms]\label{obs:j-inverse}
The map $j_{\star}$ is an explicit analytic map.
\end{obs}
Due to this nature of the map $j_{\star}$, one may recover factorizability of $S$, by applying the simple factorization theorem~\ref{thm:basic-factorization-theorem} for the image of the vector $\mathfrak{s}$, $j_{g_{\ell}.y}(\mathfrak{s})\in \oplus_{i=1}^{\dim S}V'(g_{\ell}.y)$, and inverting $j_{g_{\ell}.y}$ by calculating a polynomial approximation by applying the Lagrange-Good inversion formula for power series (c.f. \cite{lagrange-good}).

The following proposition finishes the proof of Theorem~\ref{thm:general-factorization}.
\begin{prop}\label{prop:i-factorization}
$i_{g_{\ell}.y}^{-1}:\hat{V}(g_{\ell}.y) \to V(g_{\ell}.y)$ is factorizable as a vector, namely consider the subspace $L(g_{\ell}.y)$ spanned by $i_{g_{\ell}.y}^{-1}$, then this space is factorizable.
\end{prop}
The bundle $Hom(\hat{V}(\star),V(\star))$ is smooth along stables, as the attached bundle $\hat{V}$ is.
Using Theorem~\ref{thm:brown-eskin-filip} of Brown-Eskin-Filip-Rodriguez-Hertz, we may consider the image of $i_{g_{\ell}.y}^{-1}$ inside a bundle which admits smooth stable holonomies $j_{g_{\ell}.y}(i^{-1}_{g_{\ell}.y}) \in~Hom(\hat{V}(g_{\ell}.y),V(g_{\ell}.y))'$.
We note the following equation
\begin{equation}\label{eq:hol-factorization}
    j_{g_{\ell}.y}(i^{-1}_{g_{\ell}.y}) = H(g_{\ell}.x,g_{\ell}.y).g_{\ell}.H(y,x)j_{y}(i^{-1}_{y}),
\end{equation}
where $H$ is the stable holonomy of the bundle  $Hom(\hat{V},V)'$.
We may define a map $G_{x}(y)$ to be \begin{equation}\label{eq:i-section-defn} G_{x}(y)=H(y,x)j_{y}(i^{-1}_{y}).\end{equation}
The map $G_{x}(y)$ can be seen a section from $W^{s}(x)$ to some bundle $Hom(\hat{V}(x),V(x))'$.
Using Theorem~\ref{thm:basic-factorization-theorem}, expanding the smooth holonomy $H(g_{\ell}.x,\star)$, we may factorize the subspace spanned by $j_{g_{\ell}.y}(i^{-1}_{g_{\ell}.y})$ in the form of the factorization of the holonomy $H$, acting on the section $G_{x}(y)$. 

Using Observation~\ref{obs:j-inverse}, we may factorize (the one-dimensional subspace defined by) $i^{-1}_{g_{\ell}.y}$, 
by using the Lagrange-Good formula in order to extract $i_{g_{\ell}.y}^{-1}$ out of the $j_{\star}$ embedding. This concludes the proof of Proposition~\ref{prop:i-factorization}.
Combining the factorizations of $i^{-1}_{g_{\ell}.y}$ from Proposition~\ref{prop:i-factorization} and $H_{\hat{V}}(g_{\ell}.x,g_{\ell}.y)\circ i_{g_{\ell}.x}$ which follow from Corollary~\ref{thm:basic-factorization-theorem} yields Theorem~\ref{thm:general-factorization}.

\begin{obs}\label{obs:i-almost-isomtry}
For any vector bundle $V$, the translating map defined by Oseledets' theorem, $i^{-1}_{x}:~\hat{V}(x)~\to~V(x)$ between the attached bundle and $V$ is ``almost-isometry'' in the following meaning: for every $v\in \hat{V}(x)$ and $\lvert t \rvert \gg 0$ we get
\begin{equation}
    \frac{\left\lVert g_{t}.i^{-1}_{x}(v) \right\rVert_{V(g_{t}.x)}}{\left\lVert g_{t}.v \right\rVert_{\hat{V}(g_{t}.x)}} \ll e^{\varepsilon \cdot \lvert t\rvert}.
\end{equation}
\end{obs}
This follows immediately from the definition of the quotients and Pesin norms used.

\begin{cor}[Approximation of $E^{u}(g_{\ell}.y)$]\label{cor:factor-subspace}
The subspace $E^{u}(g_{\ell}.y)$ is factorizable, for any generic point $g_{\ell}.y$.
\end{cor}
\begin{proof}
Consider the tangent bundle $V=T_{p}M$.
The subspace $E^{u}\leq~TM$ is measurable equivariant, hence by Theorem~\ref{thm:general-factorization} it is factorizable, by approximating 
$P^{-}_{V}(g_{\ell}.x,g_{\ell}.y)\mathsf{B}(g_{\ell}.x)$, where $\mathsf{B}(g_{\ell}.x)$ is any orthonormal basis of $E^{u}(g_{\ell}.x)$.
\end{proof}

\subsection*{Approximation of a Taylor polynomial}
Consider the unstable bundle $(M,E^{u})$.
Assume that $f:W^{u}(p)\to \mathbb{R}^{n}$ is some smooth function.
We will be interested in studying its Taylor expansion (over $W^{u}(p)$), for example in order to do the computation discussed in \S~\ref{subsec:approximate-stable}.
One may think of the Taylor coefficients of such a function as a element in some \emph{jet bundle}.

Ideally, we would to read of the coefficients from the jet bundle $J(E^{u}(p))$.
As the unstable bundle $(M,E^{u})$ is not smooth along stables but only Holder-continuous, we may not be able to use the previous construction.
Using the partially hyperbolic splitting we have an embedding of bundle $E^{u}\hookrightarrow TM$ and as a result we get an embedding of the corresponding jet bundles $J(E^{u}(p))\hookrightarrow J(M)$.

\begin{prop}
For any finite $k$, the sub-bundle of jets of order less or equal to $k$ $J^{\leq k}(M)$ of $J(M)$ is factorizable.
\end{prop}
\begin{proof}
The full jet bundle $J(M)$ is smooth along stables, as $TM$ is, so is $J^{\leq k}(M)$. Hence the result follows from Theorem~\ref{thm:general-factorization}, considering $S=V=J^{\leq k}(M)$.
\end{proof}
Once we have a vector in $V$ in hand, one may evaluate the jet in \emph{any given direction} of $TM$ in order to recover the needed derivatives.

\begin{cor}
Combining the factorizability of $E^{u}$ and $J(M)$, we may factorize the various Taylor coefficients of any smooth function defined over $W^{u}(g_{\ell}.y)$.
\end{cor}

In view of the formula for directional derivative, we have the following:
\begin{lem}\label{lem:Taylor-coeff-fact}
Fix some smooth $f:W^{u}(p)\to \mathbb{R}^{n}$. 
Assume that $v,v'\in~TM$ then
$\lVert D_{v}f(p)-D_{v'}f(p) \rVert \ll_{f}~ \lVert v-v' \rVert$,
where the dependence of $f$ is by some $C^{\star}$-norm, locally around $p$, where the order of the norm relates to the order of the differential.
\end{lem}

This has an immediate corollary showing we may actually recover the Taylor approximation, as we have a good approximation to both the coefficients and the actual subspace.

\begin{cor}[Factorization of Taylor polynomial]\label{cor:taylor-fact}
Let $TP_{f,R}$ denote the Taylor polynomial of $f:W^{u}(g_{\ell}.y)\to~\mathbb{R}^{n}$ of degree $R$ and $\widetilde{TP}_{f,R}$ denote the factorized Taylor polynomial, then
\begin{equation*}
    \lVert TP_{f,R}(z)-\widetilde{TP}_{f,R}(z) \rVert \ll e^{-\alpha\cdot\ell}\cdot \max\{1,\lVert z\rVert^R\}.
\end{equation*}
\end{cor}
As all the coefficients' differences are bounded by $e^{-\alpha\cdot\ell}$, the corollary follows at once.

\subsection*{Approximation of $W^{u}$ and the transfer function}
Consider $W^{u}$ as a function to some ambient Euclidean space $f:W^{u}\to \mathbb{R}^{n}$.
Then the Taylor polynomial for $W^{u}(q'_1)$ is factorizable due to Corollary~\ref{cor:taylor-fact}, where the degree of the polynomial is determined by the Lyapunov exponent with respect to the error estimate needed, as explain in \S~\ref{subsec:approximate-stable}.

Moreover, the Taylor polynomial for $W^{cs}(u.q_{1/2})$ is trivially factorizable (but in terms of an operator $A(q,u,\ell)$ depending on $u$ as well), as it depends only on $u.q$ ($W^{u}(q)$) and is independent from $q'$, given $u$.

We may now solve, approximately, the equation $W^{cs}(u.q_{1/2})\cap W^{u}(q'_{1/2})$ by applying the Lagrange-Good inversion formula~\cite{lagrange-good} to the factorized expression, in order to calculate approximation of $z$ in terms of $u.q_{1/2},q'_{1/2},\ell/2$ by polynomials up to the required precision.

\begin{cor}
The point $z$ is factorizable in the following sense -  we identify $z\in W^{u}(q'_1)$, by means of the exponential map, with a vector $z\in E^{u}(q'_1)\leq T_{q'_1}M$ dependent on $q,q',u,\ell$. For all $\beta>0$ there exists a vector bundle $V$, a map $F_{q}:W^{u}(q)\to~V_{ext}(q)$, a linear operator $A(q,u,\ell):~V_{ext}(q)\to~T_{q'_1}M$ such that
\begin{equation*}
\lVert z-A(q,u,\ell)F_{q}(q') \rVert_{T_{q'_1}M} \ll e^{-\beta\cdot\ell}.
\end{equation*}
\end{cor}

For the transfer function $T^{u}_{q'_{1/2}\to z}$, we may consider the backward flag as an element of the bundle $\oplus_{i}\left(\bigwedge^{\dim V^{\lambda_i}} TM\right)$.
Using Lemma~\ref{lem:Taylor-coeff-fact} again for $E^{u}$ and the jet bundle $J(\oplus_{i}\bigwedge^{\dim V^{\lambda_{i}}}TM)$, yields factorization of the Taylor polynomial for the transfer function.

Knowing the backward flag allows us to measure the distance between $E^{2}\oplus E^{1}$ from $E^{1}$ (in some generalized Grassmannian). 
As this distance function is smooth, we recover a factorized expression for the distance. Using the procedure described in~\S\ref{subsection:A-construction}, as both of our bundles of interest are of dimension $1$.

\begin{rem}
On a more general framework, one may calculate the ``distance'' $E^{1}\oplus E^{2}/E^{1}$ in terms of the normal forms coordinates of $W^{u}(q'_1/2)$.
This requires showing that the normal forms coordinates themselves are factorizable, and will be done in the upcoming work of Brown-Eskin-Filip-Rodriguez-Hertz~\cite{brown_eskin_filip}.
\end{rem}

\begin{cor}
The distance $hd_{z}(W^{uu}(q'_1), W^{uu}(q_1))$ is factorizable, namely there exists a bundle $V_{ext}$, a map $F_{q}:W^{s}(q)\to V_{ext}(q)$ and  an operator \\ $A(q,u,\ell):~V_{ext}(q_{1})\to~\mathbb{R}$ such that that
\begin{equation*}
    \lvert hd_{u.q_1}(W^{uu}(q'_1),W^{uu}(q_1)) - A(q,u,\ell)F_{q}(q') \rvert \ll e^{-\beta\cdot\ell}. 
\end{equation*}
\end{cor}

Using the corollary~\ref{cor:future-factorization} we may calculate the needed distance for any $t\leq \alpha\cdot \ell$.
We may identify the range of the operator $A(q,u,\ell,t)$ with $\Qc$ as defined in~\eqref{eq:Q-R-construction}.
This concludes the proof of the factorization theorem~\ref{thm:factorization} in \S~\ref{sec:factorization}.

\subsection*{Contraction property}
Fix a biregular point $q\in M$.
Recall we fixed some subspaces ($E^{u}$ and $\text{span}\left(T^{u}_{q'_{1/2}}(z)\right)$) which we factorized, along with their Taylor expansions associated to them.
Let $V_{total}$ be a finite dimensional vector bundle, which consists of the data needed for the calculation (which includes the Oseledets decomposition and the related copies of the jet bundles).
Define a vector bundle $V_{ext}$ to be a bundle consisting the required to factorize $P^{-}_{V_{total}}$. This bundle consists of the higher tangent bundles of $M$ together with the data required in order to calculate the translation map over the embedding of the extended vector bundle of $V_{total}$, given by the theorem of Brown-Eskin-Filip-Rodriguez-Hertz as in~\eqref{eq:hol-factorization},\eqref{eq:i-section-defn}.

Let $F_{q}:~W^{s}(q)~\to~V_{ext}(q)$ be the measurable map amounting to expanding the data needed to expand operator $P^{-}_{V_{total}}(g_{\ell}.q,\star)$ into a Taylor polynomial of the required degree and evaluating it at $\star=g_{\ell/2}.q'=q'_{1/2}$.
The operator $A$ then will amount to appending the operator $P^{-}_{V_{total}}$ with a measurable choice of a subspace for the required subspace we are interested in factorizing in the following sense
$A(q,u,\ell)F_{q}(q') = P^{-}_{V_{total}}(g_{\ell/2}.q,g_{\ell/2}.q').\mathbf{B}(g_{\ell/2}.q)+\text{ERROR}$, where the error is bounded exponentially in $\ell$ and $\mathbf{B}$ is some measurable choice of basis.

\begin{lem}
The following estimate holds:
$$
\lvert A(q,u,\ell,t)F_{q}(q) \rvert \ll e^{-\beta\cdot\ell}.$$
\end{lem}
This follows naturally by plugging $q'_1=q_1$ into the expression for the Hausdorff distance $hd_{u.q_1}(W^{uu}_{\text{loc}}(q'_1),W^{uu}_{\text{loc}}(q_1))$ give $0$.
Essentially, this estimate basically says that in the case that $q=q'$, the computation described above (without the errors arising from the truncations of the Taylor expansions) gives that \begin{equation*}
g_{-\ell/2}.z=W^{cs}(u.q_{1/2})\cap W^{u}(q_{1/2})=u.q_{1/2}.
\end{equation*}

Then we have the immediate corollary from the triangle inequality:
\begin{cor}
$$ \lvert hd_{g_{t}.u.q_1}(W^{uu}_{\text{loc}}(g_{t}.q'_1),W^{uu}_{\text{loc}}(g_{t}.q_1))- A(q,u,\ell,t)\left(F_{q}(q')-F_{q}(q)\right) \rvert \ll e^{-\alpha\cdot\ell}.$$
\end{cor}
This is analogous to the construction appearing in Eskin-Mirzakhani~\cite[Proposition~$6.11$, Proof of Lemma~$5.1$]{eskin-mirzakhani}.

\begin{lem}\label{lem:contracting-action}
The $g_{t}$ action over the subspace of $V_{ext}$ defined by $\text{ess-span}\left(\bigcup_{q'\in W^{s}_{\text{loc}}(q)}\left(F_{q}(q')-F_{q}(q)\right)\right)$ is contracting (when we think about those two vectors in a given local trivialization), hence the $g_{-t}$ action is expanding.
\end{lem}
\begin{proof}
By definition of $\sigma$, it consists of data in the higher tangent bundle and some Oseledets splittings of various bundles.
The image of $\sigma(q)$ in the higher tangent bundle amounts to $\overline{0}$, while the image of $\sigma(q')$ amounts to a vector in the stable part of space hence contracted (in an exponential manner) under the $g_{t}$-action, namely
\begin{equation}\label{eq:contraction-tangent-bundle}
\begin{split}
    \lVert F_{g_{t}.q}(g_{t}.q')\mid_{T^{\leq k}M}-F_{g_{t}.q}(g_{t}.q)\mid_{T^{\leq k}M} \rVert_{V_{ext}} &\ll dist(g_{t}.q',g_{t}.q) \\
    &\ll e^{-\lambda_{C}\cdot t}.
\end{split}
\end{equation}
For the Oseledets data, using the theorem by~\cite{filip}, we see that the Oseledets decomposition is dominated, in a Holder fashion, by the distance, namely 
\begin{equation}\label{eq:contraction-oseledets}
\begin{split}
    \lVert F_{g_{t}.q}(g_{t}.q')\mid_{\text{Oseledets}}-F_{g_{t}.q}(g_{t}.q)\mid_{\text{Oseledets}} \rVert_{V_{ext}} &\ll dist(g_{t}.q',g_{t}.q)^{\eta} \\
    &\ll e^{-\lambda_{C}\cdot t\cdot\eta},
\end{split}
\end{equation}
for some $\eta>0$ depending on the Oseledets decomposition.
\end{proof}

\section{Proofs of Lemma~\ref{lem:subspace-avoidance} and auxiliary lemmata }\label{app:avoiding-tech}
We start with an easy Lemma.
\begin{lem}\label{lem:badsubspace}
For any $\rho > 0$ there is a constant $c(\rho)>0$ with the following
property:  
Let $A: V \to W$ be a linear map between Euclidean spaces.
Then there exists a proper subspace
$\mathfrak{M} \subset V$ such that for any $v$ with
$\|v\|=1$ and $d(v,\mathfrak{M}) > \rho$, we have 
\begin{displaymath}
\|A\| \ge \|A v\| \ge c (\rho) \|A\|. 
\end{displaymath}
\end{lem}
\begin{proof}[Proof of Lemma~\ref{lem:badsubspace}]
The matrix $A^t A$ is symmetric, so it has a complete
orthogonal set of eigenspaces $W_1, \dots, W_m$ corresponding to
eigenvalues $\mu_1~>~\mu_2~>~\dots~>~\mu_m$. Let $\mathfrak{M} = W_1^{\perp}$. 
\end{proof}

\subsection*{Proof of Lemma~\ref{lem:subspace-avoidance}}
Let $\mu_{\star}^{s}$ denote the conditional measure along the partition $\mathcal{B}^{s}$.
We push this measure forward by the measurable map $F_{x}$ to a measure $(F_{x})_{\star}(\mu^{s}_{x})$ supported in $F_{x}(W^{s}(x))\subset V(x)$.
We define $\mathcal{L}(x)$ as the linear span of $\supp \left( F_{x}(\mu^{s}_{x})\right)\subset V(x)$.

We note that as $W^{s}(x)$ is a sub-bundle of $V(x)$ and the projection of $F_{x}(W^{s}(x))$ is the identity map, the map $F_{x}$ is actually injective, hence choosing an appropriate point in $F_{x}(W^{s}(x))$ in particular yields a point in $W^{s}(x)$.

\begin{rem}
As we assume that $\mu^{>1}_{x}$ is absolutely continuous with respect to the Riemannian volume on $W^{>1}_{loc}(x)$, using Ledrappier-Young entropy formula~\cite{led-young-2} we see that $h_{\mu}(g_{t})>0$.
We also have that $h_{\mu}(g_{t})=~h_{\mu}(g_{-t})$ as $g_{t}$ is invertible flow.
Hence $h_{\mu}(g_{-t})>0$ as well.
Using the Ledrappier-Young entropy formula again, we see that we must have that $\dim_{H}(\mu^{s}_{x})>0$ for $\mu$ almost every $x$.
In particular, we see that for $\mu$ almost every $x$, $\mathcal{L}(x)$ is \emph{not} the trivial subspace, namely $\dim \mathcal{L}(x)>0$, as $W^{s}(x)$ is embedded isometrically into $V(x)$.
\end{rem}

\begin{lem}\label{lem:5-3}
For $\mu$ almost every $x\in M$, for any $\epsilon>0$, the restriction of $(F_{x})_{\star}(\mu^{s}_{x})$ to the ball $B_{\epsilon}(\overline{0})\subset V(x)$ is not supported on a finite union of proper affine subspaces.
\end{lem}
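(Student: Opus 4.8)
The plan is to argue by contradiction, using the positivity of the stable Hausdorff dimension established in the preceding remark together with an ergodic-theoretic self-improvement: if the conditional measure $(F_x)_\star(\mu^s_x)$ were supported on a finite union of proper affine subspaces near the origin on a positive-measure set of $x$, we could push this structure around by the dynamics and derive a contradiction with $\mathrm{hdim}(m^s_x) > 0$. First I would record the measurability of the assignment $x \mapsto (F_x)_\star(\mu^s_x)$ and the fact that, were the conclusion to fail, there would be a positive-measure set $\Omega_0 \subset M$, a positive integer $N$, and a radius $\epsilon_0>0$ such that for $x \in \Omega_0$ the restriction of $(F_x)_\star(\mu^s_x)$ to $B_{\epsilon_0}(\overline 0)$ is carried by a union of at most $N$ proper affine subspaces; taking $N$ minimal, one may further select a measurable choice of such a configuration $\mathcal{A}(x)$ of affine subspaces.

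The key steps, in order, are: (1) By the Poincar\'e recurrence theorem and the contraction of $W^s$ under $g_t$, for $x\in\Omega_0$ there are arbitrarily large times $t$ with $g_{-t}.x\in\Omega_0$; since $g_{-t}$ expands the stable manifold, the image under $g_{-t}$ of the portion of $W^s(x)$ inside $B_{\epsilon_0}$ eventually covers a full neighborhood $W^s_1(g_{-t}.x)$, up to the measurable identifications $F$. (2) The derivative cocycle acts on $V(\cdot)$ compatibly with $F$ (this is the defining equivariance of the normal-form / Kalinin--Sadovskaya construction cited in \S\ref{subsec:conditionals-construction}), and it acts on affine subspaces by affine maps; hence the pushed-forward configuration $\mathcal{A}(g_{-t}.x)$ is again a union of at most $N$ proper affine subspaces and it now describes \emph{all} of $(F_{g_{-t}.x})_\star(\mu^s_{g_{-t}.x})$ on $W^s_1$, not merely its restriction to a small ball. (3) A measure on Euclidean space of positive Hausdorff dimension cannot be supported on a finite union of proper affine subspaces — each such subspace has dimension strictly less than the ambient one, but the conditional $m^s$ is, by \cite{led-young-2} and the positivity of $h_\mu(g_{-t})$ recorded in the remark, of positive Hausdorff dimension transverse to nothing proper; more precisely one invokes that $\mathrm{hdim}$ is not concentrated on a countable union of proper affine subspaces once it exceeds $0$ on a set that is itself not contained in such a union. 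Combining (2) and (3) gives the contradiction.

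The main obstacle I anticipate is Step (2): making precise that the measurable trivialization $F_x$ intertwines the dynamics with genuine affine (not merely polynomial) maps on the relevant subspaces, so that "union of proper affine subspaces" is a dynamically invariant notion. This is where Observation~\ref{obs:sub-resonance} is essential — it is exactly the statement that on the second unstable coordinate the normal-form change of coordinates is affine rather than higher-degree polynomial, so that the relevant transition maps for $V(\cdot)$ (which is built from $R(\cdot)=E^u/E^{uu}$ and the stable directions) act affinely. One must also be careful that the "finite union of proper affine subspaces" passes to the limit as $t\to\infty$: since $N$ is fixed and minimal and the affine Grassmannian is a nice (finite-dimensional, locally compact) space, one extracts a convergent subsequence of configurations, and minimality of $N$ prevents degeneration to lower-dimensional or fewer pieces. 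A secondary technical point is ensuring the recurrence in Step (1) can be taken inside the good Oseledets set $M_{Os,\theta}$ so that the norm comparison of Lemma~\ref{lem:norm-comp} controls the distortion of $F$ uniformly; this is routine given the quantitative recurrence statement $\mu(M_{Os-reg,\varepsilon,T'}) \geq 1 - 3\varepsilon$ recorded earlier.
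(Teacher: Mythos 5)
Your proposal correctly identifies the invariance-and-ergodicity step (defining a minimal count $N(x)$ of affine subspaces, showing it is $g_{-t}$-invariant via expansion, deducing it is $\mu$-a.s.\ constant), and this matches the paper's opening move. But your Step~(3) — "a measure on Euclidean space of positive Hausdorff dimension cannot be supported on a finite union of proper affine subspaces" — is false as stated and you cannot repair it in the generality you need. A proper affine subspace of $V(x)$ can have dimension anywhere from $0$ to $\dim V(x)-1$; Lebesgue measure on a line segment inside $\mathbb{R}^{3}$ has Hausdorff dimension $1>0$ and is supported on a proper affine subspace. Positive Hausdorff dimension of $m^{s}_{x}$ rules out atomicity, nothing more. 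You acknowledge the gap yourself ("once it exceeds $0$ on a set that is itself not contained in such a union" is exactly what is to be proved), but there is no way to close it along these lines. The paper uses the positivity of $\mathrm{hdim}(m^{s}_{x})$ only to conclude the much weaker fact that $\dim\mathcal{L}(x)>0$, and that is invoked only at the very end as the final contradiction, after a different and essential reduction you have skipped.

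The missing idea is the shifting-base-point argument within a single stable leaf. The paper takes $N(x)$ to count only subspaces with nonzero $W^{s}$-component, shows this count is constant a.e., and then argues: if $N(x)=k>0$, pick $y\in W^{s}(x)$ with $F_{x}(y)$ lying in one of the $k$ supporting subspaces. Since $\mu^{s}_{x}=\mu^{s}_{y}$ and $V(x)=V(y)$ but the reference map $F_{y}$ is recentered at $y$ rather than $x$, the configuration seen from $y$ must also have $k$ subspaces with nonzero $W^{s}$-component — yet the recentering produces an additional subspace passing through the new origin, forcing at least $k+1$, contradicting minimality. This forces $N(x)=0$, i.e.\ the support lies only on subspaces with zero $W^{s}$-component; projecting back to $W^{s}(x)$ (recall $F_{x}$ composed with projection is the identity there) would make $\mu^{s}_{x}$ atomic, contradicting $\mathrm{hdim}(m^{s}_{x})>0$. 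Without some version of this base-point-shift argument, the ergodic-invariance step and the Hausdorff-dimension observation do not combine to yield the lemma. Your Step~(2) discussion of sub-resonance and affine transition maps is a reasonable piece of scaffolding but it addresses equivariance under $g_{-t}$, not the crucial equivariance under translation along $W^{s}$, which is where the contradiction actually comes from.
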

\begin{proof}[Proof of Lemma~\ref{lem:5-3}]
Suppose not. Denote by $N(x)$ the minimal number of subspaces such that for some $\epsilon=\epsilon(x)>0$, the restriction of $(F_{x})_{\star}\mu^{s}_{x}$ to the ball $B_{\epsilon}(0)$ is supported on $N$ affine subspaces with non-zero $W^{s}$ components.

As the projection to $F_{x}(W^{s}(x))$ to $W^{s}(x)$ is the identity, and the action of $g_{-t}$ is expanding over $W^{s}(x)$, we have that the $g_{-t}$ action over $\mathcal{L}(x)$ is expanding in the following sense:
For every affine subspace $U$ of $V(x)$ with non-zero $W^{s}(x)$ component, we have that $g_{-t}.U \to \infty$.
Then $N(x)$ is invariant under $g_{-t}$, as for $g_{-t}.x$ we may pick $\epsilon(g_{-t}.x):=C(t)\cdot\epsilon(x)$ for some $C\geq 1$ which amounts to the expansion factor.
Hence $N(x)$ is constant almost-surely as in invariant function over an ergodic system.
Now we claim that $N(x)=0$ almost surely.
Assume not, so $N(x)=k>0$ almost surely.
Take $y\in W^{s}(x)$ such that $F_{x}(y)$ is inside some affine subspace which is contained inside the support of $F_{x}.\mu^{s}_{x}$.
Hence as $N(y)=N(x)$ we must have that there are $k$ subspaces which are supported by $(F_{y})_{\star}\mu^{s}_{y}$.
As $\mu^{s}_{x}=\mu^{s}_{y}$, and as $x$ and $y$ are stably-related we have $V(x)=V(y)$.
Now we see that the conditional measure is supported over at-least $k+1$ subspaces, which yields a contradiction.

Therefore we see that $(F_{x})_{\star}.\mu^{s}_{x}$ can only be supported over subspaces with $0$ as their $W^{s}$ component, which are also proper.
This contradicts the definition of $\mathcal{L}(x)$.
\end{proof}

\begin{lem}\label{lem:5-4} For every $\omega>0$ and $N>0$ there exists $\beta_{1}=\beta_{1}(\,N)>~0$, $\rho_{1}=\rho_{1}(\,N)>0$ and a compact subset $K_{\omega,N}\subset M$ of measure at-least $1-\omega$ such that for all $x\in K_{\omega,N}$ and any proper subspaces $\mathfrak{M}_{1},\ldots,\mathfrak{M}_{N}$ we have
\begin{equation*}
   (F_{x})_{\star}\mu^{s}_{x}\left(F_{x}.\mathcal{B}^{s}[x] \setminus\bigcup_{k=1}^{N}Nbhd(\mathfrak{M}_{k},\rho_{1})  \right) \geq \beta_{1}\cdot (F_{x})_{\star}\mu^{s}_{x}\left(F_{x}.\mathcal{B}^{s}[x] \right)
\end{equation*}
\end{lem}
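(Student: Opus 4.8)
The statement to prove is Lemma~\ref{lem:5-4}: for every $\omega > 0$ and $N > 0$, there exist $\beta_1, \rho_1 > 0$ and a compact set $K_{\omega,N}$ of measure at least $1 - \omega$ such that for all $x \in K_{\omega,N}$ and any proper subspaces $\mathfrak{M}_1, \ldots, \mathfrak{M}_N$, the pushforward measure $(F_x)_\star \mu^s_x$ gives mass at least $\beta_1$ (proportionally) to the complement of the $\rho_1$-neighborhoods of these subspaces.

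Key idea: This is a uniform, compactified version of Lemma~\ref{lem:5-3}. Lemma~\ref{lem:5-3} tells us that for $\mu$-a.e. $x$, the measure $(F_x)_\star \mu^s_x$ (restricted to a ball) is not supported on any finite union of proper affine subspaces. I want to upgrade this pointwise non-degeneracy into a quantitative statement that is uniform over a large compact set and over all choices of $N$ proper subspaces.

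The plan has several steps. First, I would argue by contradiction combined with a compactness/exhaustion argument. Fix $\omega, N$. For each $x$ in a full-measure set, Lemma~\ref{lem:5-3} holds; moreover, the function $x \mapsto (F_x)_\star \mu^s_x$ can be taken continuous on a compact set of measure $> 1 - \omega/2$ by Lusin's theorem (using the measurability of the assignments involved, as discussed after the Oseledets corollary). Call this set $K_0$.

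Second, on $K_0$ I would introduce, for each $x$, the quantity
\[
Q(x, \rho) = \inf_{\mathfrak{M}_1, \ldots, \mathfrak{M}_N} \frac{(F_x)_\star \mu^s_x\big(F_x.\mathcal{B}^s[x] \setminus \bigcup_k \mathrm{Nbhd}(\mathfrak{M}_k, \rho)\big)}{(F_x)_\star \mu^s_x(F_x.\mathcal{B}^s[x])},
\]
where the infimum runs over $N$-tuples of proper subspaces of $V(x)$. I claim $Q(x, \rho) \to Q(x, 0^+) > 0$ as $\rho \to 0$, for each $x \in K_0$: the positivity of $\lim_{\rho \to 0} Q(x,\rho)$ follows because the Grassmannian of proper subspaces is compact, so the infimum is attained (or approached) along a convergent subsequence of tuples, and a measure that charges the union of the limiting $N$ subspaces fully would contradict Lemma~\ref{lem:5-3} (after intersecting with a small ball $B_\epsilon(0)$, which carries positive mass since $0 \in F_x(W^s(x))$ lies in the support). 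Here I use that $F_x$ is an isometric embedding of $W^s(x)$, so the total mass near $0$ is bounded below.

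Third, I would make this uniform in $x$. By the continuity of $x \mapsto (F_x)_\star \mu^s_x$ on $K_0$ (in the weak-$*$ topology, together with continuity of the bundle $V(x)$ and of $F_x$), the function $x \mapsto Q(x, \rho)$ is lower semicontinuous on $K_0$ for fixed $\rho$; and $Q(\cdot, \rho)$ increases as $\rho$ decreases. A Dini-type / Egorov argument then gives: for a subset $K_{\omega, N} \subseteq K_0$ of measure $> 1 - \omega$, there is a single $\rho_1 > 0$ and $\beta_1 > 0$ with $Q(x, \rho_1) \geq \beta_1$ for all $x \in K_{\omega,N}$. Concretely: let $K_\beta^{(n)} = \{x \in K_0 : Q(x, 1/n) \geq \beta\}$; these increase in $n$ and, by the pointwise statement, $\bigcup_n K_\beta^{(n)}$ has measure $\to \mu(K_0)$ as $\beta \to 0$; choose $\beta_1$ small and $n$ large so that $\mu(K_{\beta_1}^{(n)}) > 1 - \omega$, set $\rho_1 = 1/n$.

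I expect the main obstacle to be the continuity/measurability bookkeeping in step three: ensuring that $x \mapsto (F_x)_\star \mu^s_x$, together with the ambient vector bundle $V(x)$ and the identification of proper subspaces across nearby fibers, varies continuously enough on a large compact set to make $Q(\cdot, \rho)$ lower semicontinuous and to run Egorov. This requires care because $F_x$ and the conditional measures $\mu^s_x$ are only measurable a priori, and the stable manifolds $W^s(x)$ (hence the partition $\mathcal{B}^s$) depend only Hölder-continuously on $x$ (by Hasselblatt--Wilkinson, as noted in the introduction). The resolution is the standard one in this area: restrict to a compact "good" set where Lusin/Egorov give genuine continuity of all the relevant sections, at the cost of a further $\omega$-sized reduction in measure, and trivialize the bundle $V$ locally there. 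The actual measure-theoretic estimate, once this framework is in place, is then a routine compactness argument as sketched.
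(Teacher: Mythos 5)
Your approach is essentially the same as the paper's: the paper invokes Lemma~\ref{lem:5-3} to obtain pointwise constants $\beta_x,\rho_x$ valid for all $N$-tuples of proper subspaces (the Grassmannian compactness argument you spell out in your second step is implicit there), and then takes the union over decreasing $\rho,\beta$ and chooses $\rho_1,\beta_1$ small enough by a Chebyshev-type argument, which is exactly your ``concrete'' exhaustion by the sets $K^{(n)}_\beta$. Your Lusin/continuity detour in the third step is unnecessary overhead --- the exhaustion only needs measurability of $x\mapsto Q(x,\rho)$, not lower semicontinuity --- but the core argument matches.
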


\begin{proof}[Proof of Lemma~\ref{lem:5-4}]
By Lemma~\ref{lem:5-3}, for $\mu$ almost every $x$ there exists $\beta_{x}(N)>0$ and $\rho_{x}(N)>0$ such that for any subspaces $\mathfrak{M}_1(x),\ldots,\mathfrak{M}_{N}(x)$ we have
\begin{equation}\label{eq:avoid-local}
    (F_{x})_{\star}\mu^{s}_{x}\left(F_{x}.\mathcal{B}^{s}[x] \setminus \bigcup_{k=1}^{N}Nbhd(\mathfrak{M}_{k},\rho_{x})\right) \geq \beta_{x}\cdot (F_{x})_{\star}\mu^{s}_{x}(F_{x}.\mathcal{B}^{s}[x]).
\end{equation}
Taking the union over decreasing values of $\rho,\beta$ give that
\begin{equation*}
    \mu\left(\bigcup_{\rho>0, \beta>0}\left\{ x \in M \mid \text{\eqref{eq:avoid-local} holds for $x$} \right\}\right)=1.
\end{equation*}
Now for any $\omega$, choosing sufficiently small $\rho_{1},\beta_{1}$ yields the subset $K_{\omega,N}$ of $\bigcup_{\rho>0, \beta>0}\left\{ x \in M \mid \text{\eqref{eq:avoid-local} holds for $x$} \right\}$ of measure larger than $1-\omega$.
\end{proof}

\begin{lem}\label{lem:5-5}
For every $\omega>0$ and $\epsilon_{1}>0$ there exists $\beta=\beta(\omega,\epsilon_{1})>~0$ and $K_{\omega}\subset M$ a compact subset of measure larger than $1-\omega$ and $\rho=~\rho(\omega,\epsilon_{1})>0$ such that the following holds - suppose that for each $u\in \mathcal{B}$ we have an proper subspace $\mathfrak{M}_{u}(x)$ of $V(q)$.
Let
\begin{equation*}
\begin{split}
    J_{\text{good}}(x) = \left\{y\in F_{x}.\mathcal{B}^{s}(x) \bigg\vert \text{at least $(1-\epsilon_{1})$ fraction of $u$ in $\mathcal{B}$, } d_{0}(y,\mathfrak{M}_{u}(x))>\rho/2 \right\}.
\end{split}
\end{equation*}
Then for any $x\in K_{\omega}$ 
\begin{equation*}
    (F_{x})_{\star}\mu^{s}_{x}(J_{\text{good}}(x)) \geq \beta\cdot (F_{x})_{\star}\mu^{s}_{x}(F_{x}.\mathcal{B}^{s}[x]).
\end{equation*}
\end{lem}

\begin{proof}[Proof of Lemma~\ref{lem:5-5}]
Let $n=\dim V(q)$.
By considering determinants, it is easy to show that
for any $C > 0$ there exists a constant $c_n = c_n(C) > 0$ depending on $n$ and $C$ such
that for any $\omega > 0$ and any points $v_1, \ldots , v_n$ in a ball of radius $C$ with the property
that $\lVert v_1\rVert \geq \omega$ and for all $1 < i \leq n$, $v_i$
is not within $\omega$ of the subspace spanned by
$v_{1},\ldots , v_{i-1}$, then $v_1,\ldots,v_n$ are not within $c_{n}\cdot\omega^{n}$ of any $n-1$ dimensional subspace.
Let $k_{\max}\in \mathbb{N}$ be the smallest integer greater than $1+n/\epsilon_{1}$ and let $N=N(\epsilon_{1})=\binom{k_{\max}}{n-1}$.
Let $\beta_{1},\rho_1$ and $K_{\omega,N}$ as in Lemma~\ref{lem:5-4}.
Let $\beta=\beta(\omega,\epsilon_{1})=\beta_{1}(\omega,N(\epsilon_{1}))$, $\rho=\rho(n,\epsilon_{1})=c_{n}\cdot\rho_{1}^{n}$, $K_{\omega}(\epsilon_{1})=K_{\omega,N(\epsilon_{1})}$.
Define
\begin{equation*}
    J_{\text{bad}}(x)=\mathcal{B}^{s}[x]\setminus J_{\text{good}}(x).
\end{equation*}
We claim that $J_{\text{bad}}(x)$ is contained in the union of $\rho_{1}$-neighborhoods of at-most $N$ subspaces.
Suppose this is not true. Then, for $1 \leq k \leq k_{max}$ we can inductively pick points $v_1,\ldots , v_k \in J_{\text{bad}}(x)$, such that $v_{j}$ is not inside a $\rho_{1}$-neighborhood of any of the subspaces spanned by $v_{i_1},\ldots,v_{i_r}$ for $1\leq i_{1}< \cdots \leq i_{r} \leq j-1$.
Then, any $r$-tuple of the points $v_{i_1},\ldots, v_{i_r} $ is not contained within $\rho=c_{n}\rho_{1}$ of a single subspace.
As $v_{i}\in J_{\text{bad}}(x)$ there exists $U_{i}\subset\mathcal{B}$ with $\lvert U_{i} \rvert \geq \epsilon_{i}\lvert \mathcal{B}\rvert$ such that for all $u\in U_{i}$, $d_{0}(v_{i},\mathfrak{M}_{u})<\rho/2$.
We claim that for any $1\leq i_{1}< \cdots \leq i_{r} \leq j-1$ we have
\begin{equation*}
    U_{i_1}\cap\cdots\cap U_{i_r} = \emptyset.
\end{equation*}
Assume that $u\in U_{i_1}\cap\cdots\cap U_{i_r}$, then each of $v_{i_1},\ldots,v_{i_r}$ is within $\rho/2$ of the single subspace $\mathfrak{M}_{u}$, in contradiction to the choice of the $v_{i}$'s.
Now we calculate
\begin{equation*}
    \epsilon_{1}\cdot k_{\max}\cdot\lvert \mathcal{B}\rvert \leq \sum_{i=1}^{k_{\max}} \lvert U_{i} \rvert \leq n\cdot \lvert \bigcup_{i=1}^{k_{\max}} U_{i} \rvert \leq n\cdot\lvert \mathcal{B} \rvert,
\end{equation*}
Which is a contradiction to the choice of $k_{\max}$.

Now by applying Lemma~\ref{lem:5-4} we get
\begin{equation*}
   \begin{split}
       (F_{x})_{\star}\mu^{s}_{x}(J_{\text{good}}(x)) &\geq (F_{x})_{\star}\mu^{s}_{x}\left(F_{x}.\mathcal{B}^{s}[x] \setminus \bigcup_{k=1}^{N}Nbhd(\mathfrak{M}_{k},\rho_{1}) \right) \\
       &\geq \beta\cdot (F_{x})_{\star}\mu^{s}_{x}\left(F_{x}.\mathcal{B}^{s}[x] \right)
   \end{split}
\end{equation*}
\end{proof}

We are now ready for the proof of Lemma~\ref{lem:subspace-avoidance}.
\begin{proof}[Proof of Lemma~\ref{lem:subspace-avoidance}]
Define
\begin{equation*}
    M'_{\text{dense}}=\left\{x\in M' \mid \mu^{s}_{x}(M'\cap \mathcal{B}^{s}[x]) \geq (1-\delta^{1/2})\mu^{s}_{x}(\mathcal{B}^{s}[x]) \right\}.
\end{equation*}
Since $\mathcal{B}^{s}$ is a partition, we must have that $\mu(M'_{dense})\geq (1-\delta^{1/2})$, by Markov's inequality.
For $x\in M''$ we have that
\begin{equation*}
    (F_{x})_{\star}\mu^{s}_{x}(F_{x}.(M'\cap \mathcal{B}^{s}[x])) \geq (1-\delta^{1/2})(F_{x})_{\star}\mu^{s}_{x}(F_{x}.\mathcal{B}^{s}[x]) .
\end{equation*}
Let $\beta(\omega,\epsilon_1)$ as in Lemma~\ref{lem:5-5}.
Set 
\begin{equation*}
    c(\delta)=\delta+\inf\left\{(\omega^2+\epsilon_{1}^{2})^{1/2} \mid \beta(\omega,\epsilon_{1})\geq 8\cdot\delta^{1/2} \right\}.
\end{equation*}
Clearly we have that $c(\delta)\to 0$ as $\delta\to 0$.
By definition of $c(\delta)$, we may choose $\omega=\omega(\delta)<c(\delta)$ and $\epsilon_{1}=\epsilon_{1}(\delta)<c(\delta)$ such that $\beta(\omega,\epsilon_{1})\geq~8\cdot~\delta^{1/2}$.
By Lemma~\ref{lem:5-5}, for $x\in K_{\omega}$ we have
\begin{equation*}
    (F_{x})_{\star}\mu^{s}_{x}(J_{\text{good}}(x)) \geq \beta\cdot(F_{x})_{\star}\mu_{x}^{s}(F_{x}.\mathcal{B}^{s}[x]) \geq 8\cdot\delta^{1/2}\cdot(F_{x})_{\star}\mu_{x}^{s}(F_{x}.\mathcal{B}^{s}[x]).
\end{equation*}
Let $M''=M'_{\text{dense}}\cap K_{\omega}$. We have
$\mu(M'')\geq 1-\delta-\delta^{1/2}-~c(\delta)$, so $\mu(M'')\to 1$ as $\delta\to 0$.
Also if $q\in M''$ then we have that \\ $M''\cap~J_{\text{good}}(q)~\neq~\emptyset $, by measure considerations. Hence we may choose $q'=s.q\in \mathcal{B}^{s}[q]$ such that $F_{q}(s.q)\in J_{\text{good}}(q)$.
The upper bound for $dist(q,q')$ follows trivially by the bound over the diameter of each atom in the partition.
The lower bound follows from the fact that up to using the exponential map and its inverse (from $W^{s}(q)$ to the tangent space), we see that $\lVert s \rVert^{2} \gg dist(F_{q}(s.q),p)^{2} + dist(p,\overline{0}) \geq \rho^{2}(\delta)$, where $p$ stands for the orthogonal projection from $F_{q}(s.q)$ to $\mathfrak{M}_{u}(q)$.
\end{proof}

\section{QNI condition}\label{app:QNI-cond}
For this appendix we will assume $(M,g_{t},\mu)$ is an Anosov system. The definition can be easily modified to accommodate other situations.

Assume $x,x'\in M$ are two points which are \emph{stably related} meaning $x'\in W^{s}(x)$.
We will use the notation $u.x$ to denote points over the unstable leaf of $x, W^{u}(x)$. This notation is convenient and derived from homogeneous dynamics.
Given $u.x\in W^{u}(x)$, we define the \emph{center stable projection} of $u.x$ to $W^{u}(x')$ as the unique point $u'.x'$ such that $u'.x'\in W^{u}(x')\cap W^{cs}(x)$. 
This projection defines a continuous mapping $\proj^{cs}_{x,x'}$ between $W^{u}(x)$ and $W^{u}(x')$.
We remark here that Hasselblat-Wilkinson~\cite{Hasselblatt-Wilkinson} showed that the map is only \emph{Hölder continuous} in general.

\begin{defn}
A dynamical quadrilateral $\mathfrak{Q}$ is an ensemble of four points, $x,x',u.x, z \in M$ such that $x$ is a Pesin regular point and the following holds:
\begin{itemize}
    \item $x'\in W^{s}(x)$,
    \item $u.x \in W^{uu}_{\text{loc}}(x)$,
    \item $y \in W^{u}(x')\cap W^{cs}(u.x)$.
\end{itemize}
So the quadrilateral $\mathfrak{Q}$ is defined by a base point $x$, translation along $W^{uu}_{\text{loc}}(x)$ and translation along $W^{s}(x)$. As the point $y$ is defined by $x,x',u.x$ via means of $y=\proj^{cs}_{x,x'}(u.x)$, we will denote $\mathfrak{Q}=\mathfrak{Q}(x,x',u.x)$.
\end{defn}
Consider a dynamical quadrilateral $\mathfrak{Q}(x,x',u.x)$ for which $x'$ is a Pesin regular point as well.
Inside $W^{u}(x')$, we may consider the embedded disk $W^{>1}_{\text{loc}}(x')$.
The following notion of non-integrability means that this projection does not close on itself, in a quantitative manner.
We refer the reader to \S\ref{subsec:conditionals-construction} to the definition of the conditional measures used in the next definition.
\begin{defn}\label{defn:QNI}
We say that a dynamical quadrilateral $\mathfrak{Q}(x,x',u.x)$ formed of Pesin regular points $x\in M,x'\in W^{s}(x)$ and $u.x\in W^{>1}_{\text{loc}}(x)$  satisfies \emph{quantitative non-integrability} (abbreviated QNI from now on) of order $\alpha$ and constant $C$, for some fixed $\alpha>0$ and fixed $C>0$, if the following estimate holds:
\begin{equation}\label{eq:QNI-def1}
    dist_{W^{u}}(y,W^{uu}_{\text{loc}}(x'))  \geq C\cdot\min\left\{1,\dist(x,x')^{\alpha}, \dist(x,u.x)^\alpha\right\}.
\end{equation}
Estimate~\eqref{eq:QNI-def1} allows one to get an \emph{a-priori} bound over the growth of $dist_{W^{u}}(g_{t}.y,W^{uu}_{\text{loc}}(g_{t}.x'))$ using the minimal growth ensure by the Lyapunov exponent $\lambda_{1}$.
In particular, one sees that the \emph{stopping time} when this distance grows to size $O(1)$, is \emph{polynomial} in $dist(x,x')$. This polynomial depends will play a crucial role in the construction of the factorization operator in~\S\ref{sec:8-pts}.

We say that an Anosov system $(M,g_{t},\mu)$ satisfies \emph{quantitative non-integrability} if there exists some $\alpha>0$ and $C>0$ such that for a set of positive $\mu$-measure of points $\mathfrak{P}\subset M$, for every $x\in \mathfrak{P}$, there exists a subset $\mathfrak{S}(x)\subset W^{s}_{\text{loc}}(x)$ of positive $\mu^{s}_{x}$-density, \emph{not including $x$}, and for $x'\in~\mathfrak{S}(x)$, there exist subsets $Q_{QNI}(x,x')\subset W^{uu}_{\text{loc}}(x)$, $Q_{QNI}(x',x)\subset W^{uu}_{\text{loc}}(x')$ satisfying
\begin{equation*}
    \liminf_{r\to 0}\frac{\mathsf{m}_{x}^{uu}(Q_{QNI}(x,x') \cap B_{r}^{uu}(x))}{\mathsf{m}_{x}^{uu}(B^{uu}_{r}(x))}>0 ,
\end{equation*}
and
\begin{equation*}
    \liminf_{r\to 0}\frac{\mathsf{m}_{x'}^{uu}(Q_{QNI}(x',x) \cap B^{uu}_{r}(x'))}{\mathsf{m}_{x'}^{uu}(B^{uu}_{r}(x'))}>0,  
\end{equation*}
where $B^{uu}_{r}(\star)$ denotes a ball of radius $r$ in the embedded disk $W^{uu}_{\text{loc}}(\star)$, such that for all $u.x\in Q_{QNI}(x,x')$ and all $u'.x'\in Q_{QNI}(x',x)$ the dynamical quadrilaterals $\mathfrak{Q}(x,x',u.x)$, $\mathfrak{Q}(x',x,u'.x')$ satisfy QNI of order greater or equal to $\alpha$ and constants greater or equal to $C$.
\end{defn}
\begin{figure}[H]\label{fig:QNI}
    \centering
    \scalebox{0.8}{
\begin{tikzpicture}
\coordinate (q) at (4,0); 
\coordinate (q') at (4,4);
\coordinate (x) at (0,0); 
\coordinate (z) at (0.6,2.8);
\coordinate (uq) at (0,0);
\coordinate (uq') at (-0.2,4.2);

\draw [fill=gray, opacity=0.2] (5,-0.5) to [bend left] (-1.3,-0.5) to (-1.3,1) to [bend right] (5,1) to (5,-0.5) ;
\node[right] at (5,-0.5) {\smaller $W^{u}(x')$};

\draw [fill=gray, opacity=0.2] (5,3) to [bend left] (-1.3,3) to (-1.3,5) to [bend right] (5,5) to (5,3) ;
\node[right] at (5,3) {\smaller $W^{u}(x)$};

\filldraw (q) circle (2pt) node[below right]{$x$};
\filldraw (q') circle (2pt) node[above right]{$x'$};
\filldraw (x) circle (2pt) node[ left]{$u.x$};
\filldraw (z) circle (2pt) node[left,rotate=-15]{\smaller $\proj^{cs}_{x,x'}(u.x)$};

\draw[decoration={brace,mirror,raise=5pt},decorate]
  (q) -- node[right=6pt] {\smaller $L$} (q');
 
\draw[decoration={brace,mirror,raise=5pt},decorate]
  (z) -- node[right=5pt] {\smaller $L^{\alpha}$} (0.8,3.6);

\draw[very thick] (q) to [bend left] (x);
\draw[very thick] (q') to [bend left](uq');
\draw[dashed] (x) to (z);
\end{tikzpicture}
    \caption{Illustration of $\mathfrak{Q}$ and QNI}
\end{figure}
As $x,x'$ are stably-related, flowing forward shrinks the distance between them in an exponential manner, hence we may assume that $\mathfrak{S}(x)$ accumulates at $x$.

As the set satisfying QNI is of positive $\mu$ measure, and as the set of $\mu$-generic points is of full measure, we may assume that the set of points which satisfy QNI is formed of generic points.

The main difference between this definition and Definition~\ref{def:QNI2} is the requirement regarding the the trajectories of $x'$ under $f$ to return to the set $\mathcal{X}$, but the previous definition had to ensure the non-integrability ``in all scales''.
In the proof itself, we actually going to use the second definition. We will indicate where we are using that definition in due course.

\bibliographystyle{plain}
\bibliography{em-anosov}

\end{document}